\newcommand{\semicircle}[3]{\draw[thick] (#1+#3,#2) arc [start angle=0, end angle=180, radius=#3];}
\newcommand{\ellipse}[3]{\draw  (#1,#2) ellipse (#3 and #3/2);}
\newcommand{\script}{\mathcal}
\newcommand{\parentheses}[1]{{\left( {#1} \right)}}
\newcommand{\p}{\parentheses}
\newcommand{\of}{\parentheses}
\newcommand{\closure}[1]{\overline{#1}}
\newcommand{\bd}[1]{\mathrm{Bd}\of{#1}}
\newcommand{\Set}[1]{{\left\lbrace {#1} \right\rbrace}}
\newcommand{\singleton}{\Set}
\newcommand{\cardinality}[1]{{\left\lvert {#1} \right\rvert}}
\def\set#1:#2{\Set{{#1} \colon {#2}}}
\newcommand{\diam}[1]{\textnormal{diam}{\left({#1} \right)}}
\theoremstyle{plain}
\newtheorem{theorem}{Theorem}
\newtheorem*{theorem*}{Theorem}
\newtheorem{lemma}[theorem]{Lemma}
\newtheorem{cor}[theorem]{Corollary}
\newtheorem{prop}[theorem]{Proposition}
\theoremstyle{definition}
\newtheorem{defn}[theorem]{Definition}
\newcommand{\N}{\mathbb{N}}
\newcommand{\Q}{\mathbb{Q}}
\title[Graph-Like Compacta: Characterizations and Eulerian Loops]{Graph-Like Compacta: \\ Characterizations and Eulerian Loops}
\author{Benjamin Espinoza}
\address{Department of Mathematics, University of Pittsburgh at Greensburg, Greensburg, PA~15601, USA}
\email{bee1@pitt.edu}
\author{Paul Gartside}
\address{Department
    of Mathematics, University of Pittsburgh, Pittsburgh, PA~15260, USA}
\email{gartside@math.pitt.edu} 
\author{Max Pitz}
\address{Department of Mathematics, Bundesstra\ss e 55, 20146 Hamburg, Germany}
\email{max.pitz@uni-hamburg.de}
\keywords{Infinite graph; locally finite graph; end; Freudenthal compactification; graph-like space; topology; Eulerian}
\subjclass[2010]{05C38, 05C45, 05C63, 54F15, 57M15}
\begin{document}

\begin{abstract}
A \emph{compact graph-like} space is a triple $(X,V,E)$ where $X$ is a compact, metrizable space,  $V \subseteq X$ is a closed zero-dimensional  subset, and  $E$ is an index set such that $X \setminus V \cong E \times (0,1)$. New characterizations of compact graph-like spaces are given, connecting them to certain classes of continua, and to standard subspaces of Freudenthal compactifications of locally finite graphs. These are applied to  characterize  Eulerian graph-like compacta.
\end{abstract}

\maketitle

\section{Introduction}

Locally finite graphs can be compactified, to form the Freudenthal compactification, by adding their ends. This topological setting provides what appears to be the `right' framework for studying locally finite graphs. Indeed, many classical theorems from finite graph theory that involve paths or cycles have been shown to generalize to locally finite infinite graphs in this topological setting, while failing to extend in a purely graph theoretic setting. See the survey series \cite{DSurv}. 
More recently, compact graph-like spaces were introduced by Thomassen and Vella,  \cite{thomassenvella}, as a natural class  encompassing graphs, and in particular containing the standard subspaces of Freudenthal compactification of locally finite graphs.

A \emph{compact graph-like} space is a triple $(X,V,E)$ where: $X$ is a compact, metrizable space,  $V \subseteq X$ is a closed zero-dimensional  subset, and  $E$ is a discrete index set such that $X \setminus V \cong E \times (0,1)$. 
The sets $V$ and $E$ are the \emph{vertices} and \emph{edges} of $X$ respectively. 
More generally,  a topological space $X$ is compact graph-like, if there exists $V \subseteq X$ and a  set $E$ such that $(X,V,E)$ is a compact graph-like space. Recall that connected compact metrizable spaces are called \emph{continua}, and so a 
\emph{graph-like continuum} is a continuum which is graph-like.

Papers in which  graph-like spaces have played a key role include \cite{thomassenvella} where several Menger-like results are given, and  \cite{graphlikeplanar} where  algebraic criteria for the planarity of graph-like continua are presented. In \cite{infinitematroids}, aspects of the matroid theory for graphs have been generalized to infinite matroids on graph-like spaces. 

In this paper we present two groups of new results. The first group consists of characterizations 
of compact graph-like spaces and continua. These connect graph-like continua to certain classes of continua which have been intensively studied by continua theorists. We also establish that compact graph-like spaces are not simply `like' the Freudenthal compactifications of locally finite graphs, but in fact \emph{are} standard subspaces of the latter. 
Our second group of results consists of various characterizations of when a graph-like continuum is Eulerian. These naturally extend  classical results for graphs.

\subsection{The Main Theorems}

In Section~\ref{sec:repchar} we give various characterizations and representations of compact graph-like spaces, and graph-like continua, which demonstrate that graph-like continua form a class of continua which are also of considerable interest from the point of view of continua theory. These results can be summarized as follows.
\begin{theorem*}[A]
The following are equivalent for a continuum $X$:
\begin{enumerate}[label=(\roman*)]
	\item $X$ is graph-like,    
    \item $X$ is regular and has a closed zero-dimensional subset $V$ such that all points outside of $V$ have order $2$,
    \item $X$ is completely regular,
    \item[(iv)]  $X$ is a countable inverse limit of finite connected multi-graphs with onto, monotone, simplicial bonding maps with non-trivial fibres at vertices only,
    \item[$(iv)'$]  $X$ is a countable inverse limit of finite connected multi-graphs with onto, monotone bonding maps that project vertices onto vertices, and 
    \item[(v)] $X$ is homeomorphic to a connected standard subspace of a Freudenthal compactification of a locally finite graph.
\end{enumerate}
\end{theorem*}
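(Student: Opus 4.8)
The plan is to treat the two natural families of conditions separately and splice them at $(i)$: among the continuum-theoretic conditions I would prove $(i) \Rightarrow (ii) \Rightarrow (iii)$ together with the return arrow $(iii) \Rightarrow (ii) \Rightarrow (i)$, and among the representation conditions I would close the loop $(i) \Rightarrow (iv) \Rightarrow (iv)' \Rightarrow (v) \Rightarrow (i)$. All but one of these arrows are essentially constructive, so I would dispatch those first and quarantine $(iii) \Rightarrow (ii)$ as the analytic core. A fact I would establish at the outset and use repeatedly is that, by compactness, for every $\varepsilon > 0$ only finitely many edges have diameter $\geq \varepsilon$.

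For $(i) \Rightarrow (ii)$ the order-$2$ claim is free, since points outside $V$ lie on open edges $\cong (0,1)$; regularity is the only content. Given $v \in V$ and a target neighbourhood I would pick a clopen-in-$V$ set $W \ni v$ of small diameter, cut each of the finitely many ``large'' incident edges at a single interior point, and note that all sufficiently small edges lie wholly inside, so the resulting open set has finite boundary. For $(ii) \Rightarrow (i)$: regularity forces $X$ to be locally connected, an order-$2$ point of a Peano continuum has arc neighbourhoods, so $X \setminus V$ is a $1$-manifold; its components are open arcs (a circle component would be clopen, giving only the trivial case) and are countably many, so $X \setminus V \cong E \times (0,1)$. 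The loop $(ii) \Rightarrow (iii)$ is then quick: a nondegenerate subcontinuum $K$ cannot sit inside the zero-dimensional set $V$, so it meets some open edge $e$; since $K$ is connected and the only routes between points of $e$ run along $e$ or out through its two endpoints, $K$ must contain a nondegenerate subarc of $e$, which is open in $X$, so $K$ has nonempty interior and $X$ is completely regular.

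For the representation half, $(i) \Rightarrow (iv)$ I would build the inverse system by collapsing small edges: for each $n$ contract every edge of diameter $< 1/n$ together with the portions of $V$ it bounds, leaving a finite connected multigraph $G_n$, with the natural quotients $G_{n+1} \to G_n$ onto, monotone, and simplicial with nondegenerate fibres only over vertices, and $X = \varprojlim G_n$. The weakening $(iv) \Rightarrow (iv)'$ is trivial, and $(v) \Rightarrow (i)$ is direct because in a connected standard subspace the vertices-and-ends form a closed zero-dimensional set whose complement is a disjoint union of open edges. The step $(iv)' \Rightarrow (v)$ demands reconstructing an honest locally finite graph from the system: I would take the true vertices to be the edge-endpoints of finite order and declare the remaining, infinite-order, points of $V = \varprojlim V(G_n)$ to be ends; the diameter-shrinking of edges then guarantees each true vertex has finite degree, and one verifies that the Freudenthal compactification of the resulting graph contains $X$ as the required standard subspace.

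The main obstacle is $(iii) \Rightarrow (ii)$, where no graph structure is given and it must be synthesised from the purely continuum-theoretic hypothesis. I would first show that a completely regular continuum is regular, by contraposition: if regularity failed at a point $p$, then every small neighbourhood of $p$ would have infinite boundary, and extracting a convergent sequence of the separating arcs would produce a nondegenerate subcontinuum with empty interior, contradicting $(iii)$. With regularity, and hence local connectedness and finite-boundary neighbourhoods, in hand, I would set $V = \overline{\{x : \operatorname{ord}(x,X) \neq 2\}}$ and use complete regularity to control the branch points and infinite-order points, showing they cannot accumulate so as to manufacture an interior-free subcontinuum; this is precisely what forces $V$ to be zero-dimensional with order-$2$ complement. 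This control of the branch and infinite-order set via complete regularity is the crux of the theorem, and the constructive directions above are comparatively routine once it is available.
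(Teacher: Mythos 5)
Your overall architecture is reasonable, and several of your arrows do match the paper: your (i)$\Rightarrow$(ii) is essentially Lemma~\ref{lemma:gl-regular}, your (ii)$\Rightarrow$(i) is Theorem~\ref{chargraphlike}, (ii)$\Rightarrow$(iii) is a correct direct argument, and (iv)$\Rightarrow$(iv)$'$ and (v)$\Rightarrow$(i) are indeed immediate. But the two arrows that carry the real weight of the theorem both have genuine gaps. The first is (iv)$'\Rightarrow$(v). Taking as vertices the finite-order edge-endpoints and ``declaring'' the remaining points of $V$ to be ends cannot work: the ends of a graph are determined intrinsically by its rays and cannot be assigned, and the graph spanned by the edges of $X$ is in general neither connected nor equipped with the right structure at infinity. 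Concretely, for the Cantor bouquet of circles this graph is a countable disjoint union of double edges; it contains no rays, hence has no ends at all, so the Cantor set of limit vertices is simply lost. For the Hawaiian earring the recipe is vacuous: the unique vertex is incident with infinitely many loops, so it is not a finite-order endpoint, and edges stripped of their endpoints do not form a graph. This failure is exactly why the paper's Theorem~\ref{standardsubspacetheorem} is laborious: every edge of $X$ must first be subdivided into a double ray, and auxiliary vertices and edges (the iterated line graphs $L(K_v)\subset K_v^{\circledast}$) must be interpolated to manufacture a connected locally finite graph $L$ whose ends represent the vertices of $X$; only this subdivided copy of $X$ is a standard subspace of $\gamma L$. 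No variant of ``reuse $X$'s own edges as the graph'' can supply that structure.

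The second gap is (iii)$\Rightarrow$(ii), which you rightly call the crux but then assert rather than prove. Your argument that completely regular implies regular invokes ``a convergent sequence of the separating arcs,'' but no arcs are available at that stage --- arc-connectedness sits at the far end of the implication chain $(\ddagger)$ and cannot be presupposed --- and a Hausdorff limit of infinite boundaries is not obviously a nondegenerate subcontinuum with empty interior. Likewise, for $V$ the closure of the set of points of order $\neq 2$, saying that complete regularity ``is precisely what forces $V$ to be zero-dimensional'' restates the conclusion; no mechanism is given. The paper fills exactly this hole by citing Krasinkiewicz's structure theorem (Theorem~\ref{thm:compreg}): a continuum is completely regular if and only if it is the union of a compact zero-dimensional set $F$ and a null sequence of free arcs $A_1,A_2,\dots$ with $A_j\cap F=\mathrm{Bd}(A_j)$, from which (iii)$\Rightarrow$(i) is immediate (Proposition~\ref{pro:grlkcomreg}). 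That theorem, or an argument of comparable substance, is indispensable; without it your central implication is a statement of intent, not a proof. Finally, a smaller repair: in your diameter-based proof of (i)$\Rightarrow$(iv), contracting each edge of diameter $<1/n$ ``together with the portions of $V$ it bounds'' (i.e.\ its endpoints) does not yield a finite quotient when $V$ is a Cantor set; one must contract the connected components of $V\cup\bigcup\{\overline{e}:\mathrm{diam}(e)<1/n\}$ and prove, via compactness and connectedness of $X$, that only finitely many components occur. With that correction your construction is a legitimate alternative to the multi-cut system of Theorem~\ref{inverselimit}.
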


Here a continuum is \emph{regular} if it has a base all of whose members have finite boundary, and \emph{completely regular} if all non-trivial subcontinua have non-empty interior. A map is \emph{monotone} if all fibres are connected, while a map between graph-like spaces is \emph{simplicial} if it maps vertices to vertices, and edges either homeomorphically to another edge, or to a vertex. 
A \emph{standard subspace}  of a compact graph-like space is a closed subspace that contains all edges it intersects.
The equivalence of (i) and (ii) is analogous to a well-known topological characterization of finite graphs, namely a continuum is a graph if and only if every point has finite order, and all but finitely many points have order $2$, \cite[Theorem 9.10 \& 9.13]{Nadler}.
The equivalence of (i) and (iv) provides a powerful tool to lift results in finite graph theory to graph-like continua. Indeed this is key to our results on Eulerian paths and loops below. It also is key to the equivalence of (i) and (v). 
The equivalence of (i) and (iii) yields a purely internal topological characterization of graph-like continua, without any reference to distinguished points, `vertices', or subsets, `edges'.

We prove all of Theorem~(A) taking `compact graph-like space' as the basic notion. Because `compact graph-like' takes a middle ground between topology and graph theory, our proofs are clean and efficient. However it is important to note that the equivalence of (i) and (iii) follows, modulo some basic lemmas, from a result of Krasinkiewicz, \cite{kra}, while the implication (iii) implies (iv) is essentially shown by Nikiel in \cite{inverselimitsnikiel}. Nikiel also claimed,  without proof, the converse implication. Regarding (v), Bowler et al.\ have claimed, without proof, the weaker assertion that every compact graph-like space is a minor (essentially: a quotient) of a Freudenthal  compactification of some locally finite graph, \cite[p.~6]{infinitematroids}.

\medskip

In Sections~\ref{EulerPf} and~\ref{sec:BruhnStein} 
we extend some well-known characterizations of Eulerian graphs to graph-like continua. 
Let $G$ be a (multi-)graph. A \emph{trail} in $G$ is an edge path with no repeated edges. It is \emph{open} if the start and end vertices are distinct, and \emph{closed} if they coincide. We also call closed trails \emph{circuits}. A \emph{segment} is a trail which does not cross itself.  A \emph{cycle} is a circuit which never crosses itself. A trail is \emph{Eulerian} if it contains all edges of the graph. (Note that an \emph{Eulerian circuit} is a closed Eulerian trail.) The graph $G$ is \emph{Eulerian} (respectively, \emph{closed Eulerian}) if it has an open (respectively, closed) Eulerian trail; and \emph{Eulerian} if it either open or closed Eulerian. Call a vertex $v$ of a graph $G$ \emph{even} (respectively, \emph{odd} if  the degree of $v$ in $G$ is even (respectively, odd).

Classical results of Euler and Veblen  characterize  multi-graphs  with closed, and  respectively, open, Eulerian trails as follows.
Let $G$ be a connected multi-graph with vertex set $V$, then the following are equivalent: (i) $G$ is closed [open] Eulerian,
(ii) every vertex is even [apart from precisely two vertices which are odd],
(iii) {[there are vertices $x \neq y$ such that]} for every bi-partition of $V$, the number of cross edges is even [if and only if $x$ and $y$ lie in the same part], and (iv) the edges of $G$ can be partitioned into edge disjoint cycles [and a non-trivial segment].
We extend these results to compact  graph-like spaces, and  prove the following result.
\begin{theorem*}[B]
Let $X$ be a graph-like continuum with vertices $V$. The following are equivalent:
\begin{enumerate}
\item[(i)] $X$ is closed [open] Eulerian,
\item[(ii)] every vertex is even [apart from precisely two vertices which are odd],
\item[(ii)${}'$] every vertex has strongly even degree [apart from precisely two vertices which have strongly odd degree],
\item[(iii)] {[there are vertices $x \neq y$ such that]} for every partition of $V$ into two clopen pieces, the number of cross edges is even [if and only if $x$ and $y$ lie in the same part], and
\item[(iv)] the edges of $X$ can be partitioned into edge-disjoint circles [and a non-trivial arc].
\end{enumerate}

Further, if $X$ is closed [open] Eulerian then either $X$ has continuum many distinct Eulerian loops [Eulerian paths], or has a finite  number of distinct Eulerian loops [Eulerian paths], which occurs if and only if $X$ is homeomorphic to a finite closed [open] Eulerian graph.
\end{theorem*}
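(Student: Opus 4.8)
The plan is to prove the dichotomy in three stages: an easy upper bound, the easy ``finite'' direction, and then the substantive ``continuum many'' direction. Throughout I treat the closed Eulerian case and count Eulerian loops up to reparametrisation of $S^1$; the open case is entirely analogous, with the two odd vertices playing the role of the endpoints of the Eulerian path, and the gadgets below chosen away from them. First, there are always at most $\continuum$ Eulerian loops, since an Eulerian loop is represented by a continuous map $S^1 \to X$ and, as $X$ is compact metrizable hence second countable, there are at most $\continuum$ such maps. Second, if $X$ is homeomorphic to a finite closed Eulerian graph $G$, then every topological Eulerian loop of $X$ corresponds, after suppressing the order-$2$ points, to a combinatorial closed Eulerian trail of the reduced finite graph underlying $G$; since a finite graph has only finitely many such trails (e.g.\ by the BEST theorem), $X$ has only finitely many Eulerian loops. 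This count depends only on the order structure of $X$, not on any subdivision, so it is a homeomorphism invariant.

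The heart of the proof is the converse: if $X$ is \emph{not} homeomorphic to a finite graph, then $X$ has at least $\continuum$ Eulerian loops. Fix one Eulerian loop $\ell_0$, which exists by the equivalence of (i). The strategy is to manufacture countably many pairwise disjoint small pieces $P_1, P_2, \dots$ of $X$, each admitting a non-trivial \emph{local reroute} of $\ell_0$ supported on $P_i$ alone. Given any subset $A \subseteq \N$, simultaneously rerouting $\ell_0$ inside every $P_i$ with $i \in A$ yields a traversal $\ell_A$; because the pieces are pairwise disjoint and shrink in diameter, $\ell_A$ is again a continuous single closed Eulerian loop, and $A \mapsto \ell_A$ is injective since the reroute in each $P_i$ is detectable from $\ell_A$. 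This produces an injection $2^{\N} \to \{\text{Eulerian loops}\}$, giving at least $2^{\aleph_0} = \continuum$ of them. To locate the pieces I invoke the order characterisation behind Theorem~(A)(ii): a closed Eulerian continuum that is not a finite graph must, since all orders are even, either have a point $p$ of infinite order, or have infinitely many branch points, all of even order at least $4$.

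In the first case $\ell_0$ visits $p$ infinitely often and so decomposes into infinitely many edge-disjoint closed excursions at $p$; reversing the orientation of a single excursion changes no edge outside it and again yields a single closed Eulerian loop (the reverse of a $p$--$p$ trail is a $p$--$p$ trail), and as no non-trivial trail equals its own reversal, distinct excursions furnish the required independent local reroutes. In the second case I use the regular base from Theorem~(A)(ii): taking neighbourhoods of finite boundary around infinitely many branch points and passing, by compactness, to a pairwise disjoint subfamily, I extract infinitely many disjoint subcontinua, each meeting the rest of $X$ in a finite cut set and each carrying either a cycle attached at a single vertex (a ``petal'', rerouted by reversing it) or two parallel sub-trails between a $2$-element separator (rerouted by an appropriate local interchange of the two sub-trails). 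In either configuration the move is supported on the piece alone and preserves connectivity of the traversal.

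Combining the stages, when $X$ is not a finite graph we obtain at least $\continuum$ loops, so with the upper bound exactly $\continuum$; and when $X$ is a finite graph we obtain finitely many, which proves the stated equivalence. I expect the main obstacle to be precisely the second case of the continuum-many direction: extracting infinitely many \emph{pairwise disjoint} and \emph{independently re-routable} pieces when there is neither an infinite-order point nor any petal (as in a ``chain of digons''), and verifying that each local reroute genuinely returns an Eulerian loop. The two delicate checks are that the modified traversal stays a single closed curve — which holds because every move either reverses a closed excursion or interchanges two parallel sub-trails between the same separator — and that $\ell_A$ stays \emph{continuous} at parameters accumulating on the complicated part of $X$, which holds because each move preserves the diameters of the pieces it acts on. Establishing the existence of these disjoint re-routable pieces, via the finite-boundary base and a local analysis at the even-order branch points, is the crux of the argument.
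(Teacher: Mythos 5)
Your proposal addresses only the ``further'' counting dichotomy of Theorem~(B); the equivalences (i)--(iv) and (ii)${}'$, which form the bulk of the theorem and which the paper proves via Lemmas~\ref{lemma:evencuts}, \ref{lemma5}, \ref{lem_even}, Proposition~\ref{theorem16} and Section~\ref{sec:BruhnStein}, are simply assumed. Even within the part you do attempt, the upper bound and the finite-graph direction are fine, but the ``continuum many'' direction has a genuine gap, and it is exactly the step you flag as the crux: the extraction of infinitely many pairwise disjoint, independently re-routable pieces is never established, and the structural claims meant to support it are false. First, in a closed Eulerian graph-like continuum it is \emph{not} true that ``all orders are even'': in the space $X_Q$ of Example~4 the two attachment points of each circle have topological order $3$, yet $X_Q$ is closed Eulerian --- cut parity and Menger--Urysohn order are different notions at non-isolated vertices, so your ``local analysis at the even-order branch points'' starts from a wrong premise. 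Second, the ``two parallel sub-trails between a $2$-element separator'' move is not generally available: a region whose boundary cut consists of two edges is entered and exited exactly once by an Eulerian loop (each cut edge is traversed once), so the loop crosses it in a single pass and there is nothing to interchange; and for pieces with larger boundary, an arbitrary re-pairing of entries and exits inside the piece typically splits the Eulerian loop into two disjoint closed curves (the classical transition-system obstruction), so ``supported on the piece alone and preserves connectivity'' fails precisely where it is needed. Finally, even where local moves exist, you must verify that the $\continuum$ many resulting loops are pairwise inequivalent in the paper's sense (homotopy relative to endpoints), not merely distinct parametrized curves; this is not addressed.

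The paper's proof of Theorem~\ref{finiteorcontinuum} avoids all of these difficulties by reversing the quantifier. It topologizes the set of Eulerian loops as the inverse limit $\mathcal{E}(X)=\lim_{\leftarrow}\mathcal{E}(G_n)$ of the finite sets of Eulerian circuits of the approximating graphs $G_n$ from Theorem~\ref{inverselimit}. This is a compact subspace of the Cantor set, so either it has no isolated point --- in which case it automatically has cardinality $\continuum$, and distinct points of $\mathcal{E}(X)$ yield inequivalent loops because they already differ at some finite level --- or some Eulerian loop $f$ is isolated, i.e.\ uniquely determined by its projection to a single finite quotient $G(\mathcal{U})$. In the isolated case a \emph{single} swap suffices: the loop crosses each part $X[U_i]$ in finitely many sub-arcs with fixed entry and exit vertices, and if two of these sub-arcs met in two interior vertices $y\neq z$, exchanging their $[y,z]$-portions (which keeps all entry/exit data, hence keeps a single Eulerian loop with the same projection) would contradict isolation; therefore the sub-arcs pairwise meet in at most one point and $X[U_i]$ is a finite graph, whence so is $X$. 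Thus one never needs infinitely many disjoint reroutes, nor the continuity of an infinite simultaneous modification: compactness and zero-dimensionality of $\mathcal{E}(X)$ do that work. If you want to salvage your outline, the missing idea is exactly this contrapositive --- prove that an \emph{isolated} Eulerian loop forces $X$ to be a finite graph, rather than constructing continuum many loops by hand when $X$ is not one.
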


Let $X$ be a compact graph-like space with set of vertices $V$.   A subspace of $X$ is called an \emph{arc} if it is homeomorphic to $I=[0,1]$, and a \emph{circle} if it is homeomorphic to the circle, $S^1$.  A (standard) \emph{path} is a continuous map $f\colon I \to X$ such that $f(0)$ and $f(1)$ are vertices, $f$ is injective on the interior of every edge and $f^{-1} (V)$ has empty interior. Note that every continuous map $f\colon I \to X$ with $f(0), f(1)$ as vertices is homotopy equivalent (with fixed endpoints) to a path. Also note that if $X$ is a graph (with usual topology), then every path yields a corresponding trail, and every trail corresponds to a path. A path, $f$, is \emph{open} if $f(0) \ne f(1)$, and \emph{closed} if $f(0)=f(1)$. Closed paths are called \emph{loops}. A path (or loop) is \emph{Eulerian} if its image contains every edge. Note that in a graph with the usual topology there is a natural correspondence between Eulerian paths and Eulerian trails, and Eulerian circuits and Eulerian loops.
We abbreviate `closed and open' to `clopen'. A vertex $v$ is  \emph{odd} (resp.\@ \emph{even}) if and only if there exists a clopen subset $A$ of $V$ containing $v$, such that for every clopen subset $C$ of the vertices $V$ with $v \in C \subseteq A$ the number of edges between $C$ and $V \setminus C$ is odd (resp.\ even).

The equivalence of (i), (ii), (iii) and (iv) in Theorem~(B) is established in Section~\ref{pfthmab}. At the heart of our proof is our representation of compact graph-like spaces as inverse limits, and an induced inverse limit representation of all Eulerian loops (possibly empty, of course). The `further' part of Theorem~(B) follows in Section~\ref{eulercnt} from topological considerations of the space of all Eulerian loops.

Our definition of `even' and `odd' vertices is natural within the context of Theorem~(B). An alternative approach to degree, due to Bruhn and Stein \cite{euler}, leads to the notions of `strongly even degree' and `strongly odd degree' appearing in item (ii)${}'$ of Theorem~(B). See Section~\ref{sec:BruhnStein} for details and the proof that `(ii) implies (ii)${}'$' and `(ii)${}'$ implies (i)'.

\subsubsection*{Alternative Paths} Theorem~(A) shines an unexpected light on connections between concepts from continua theory (completely regular continua and inverse limits of graphs), and concepts arising from infinite graph theory (graph-like continua, Freudenthal compactifications of graphs, and their standard subspaces). As a result we discover that the various parts of Theorem~(B) generalize numerous results in the literature, and--with the considerable assistance of the machinery developed here--Theorem~(B) can be derived from older work.

For Freudenthal compactification of graphs, the equivalence of (i), (iii) and (iv) is due to Diestel \& K\"{u}hn, \cite[Theorem 7.2]{infinitecycles}, while the equivalence with (ii)${}'$ is  due to Bruhn \& Stein, \cite[Theorem 4]{euler}. For standard subspaces of Freudenthal compactification of graphs,
    the equivalence of (iii) and (iv) is due to Diestel \& K\"{u}hn, \cite[Theorem 5.2]{infinitecycles2}, 
    the equivalence of (i) and (iv) is due to Georgakopoulos, \cite[Theorem 1.3]{agelos}, and the 
    equivalence (ii)${}'$ and (iii) is due to Berger \& Bruhn, \cite[Theorem 5]{standard}. 
It should also be noted that the method of lifting Eulerian paths and loops via inverse limits, used by Georgakopolous, was previously introduced by Bula et al.\ in \cite[Theorem 5]{Koenigsberg}.

Thus an alternative path to proving Theorem~(B) is as follows. Let $X$ be a graph-like continuum. According to the equivalence of (i) and (v) in Theorem~(A), which depends on the equivalence of (i) and (iv), $X$ is homeomorphic to a standard subspace of a Freudenthal compactification of a graph. Now equivalence of (i), (ii)${}'$, (iii) and (iv) follows from the results cited immediately above. To add equivalence of (ii) apply Theorem~\ref{thm_menger} (which uses Theorem~(A) (i)$\iff$(iv), and a non-trivial inverse limit argument) and Lemmas~\ref{lem_even} and~\ref{lem_deg}. 
Although this alternative path exists, the direct proofs given here in Sections~\ref{EulerPf} and~\ref{sec:BruhnStein}, using compact graph-like spaces as the basic notion, are--in the authors' view--much shorter and more natural.

\subsection{Examples}
Before proving our results on  graph-like continua, we now introduce some examples. 
With these examples we have three objectives. First show a little of the variety of graph-like continua. Second elucidate some of the less familiar terms in Theorem~(B), in particular `even' and `odd' vertices. Third  demonstrate the remarkable complexity of Eulerian loops and paths in graph-like continua. 
This complexity highlights the hidden depths of Theorem~(B).

\subsubsection*{Example 1.} The two-way infinite ladder with single diagonals, which  is the infinite graph $G$ shown below. Notice that all its vertices are even, but it has no Eulerian loop. 

\medskip

\begin{center}
\begin{tikzpicture}[thick]

\draw (-3.8,0.5) -- (3.8,0.5);
\draw (-3.8,-1) -- (3.8,-1);

\draw (0,0.5) -- (0,-1);
\draw (0,-1) -- (-1,0.5);

\draw (1,0.5) -- (1,-1);
\draw (1,-1) -- (0,0.5);

\draw (2,0.5) -- (2,-1);
\draw (2,-1) -- (1,0.5);

\draw (3,0.5) -- (3,-1);
\draw (3,-1) -- (2,0.5);

\draw[dashed] (3,0.5) -- (3.75,-0.75);

\draw (-1,0.5) -- (-1,-1);
\draw (-1,-1) -- (-2,0.5);

\draw (-2,0.5) -- (-2,-1);
\draw (-2,-1) -- (-3,0.5);

\draw (-3,0.5) -- (-3,-1);

\draw[dashed] (-3,-1) -- (-3.75,0.25);

\draw[dotted] (-4,-0.25) -- (-4.5,-0.25);
\draw[dotted] (4,-0.25) -- (4.5,-0.25);

\draw (0,0.5) node[circle, inner sep=1.5, fill=blue] {};
\draw (0,-1) node[circle, inner sep=1.5, fill=blue] {};

\draw (1,0.5) node[circle, inner sep=1.5, fill=blue] {};
\draw (1,-1) node[circle, inner sep=1.5, fill=blue] {};

\draw (-1,0.5) node[circle, inner sep=1.5, fill=blue] {};
\draw (-1,-1) node[circle, inner sep=1.5, fill=blue] {};

\draw (2,0.5) node[circle, inner sep=1.5, fill=blue] {};
\draw (2,-1) node[circle, inner sep=1.5, fill=blue] {};

\draw (-2,0.5) node[circle, inner sep=1.5, fill=blue] {};
\draw (-2,-1) node[circle, inner sep=1.5, fill=blue] {};

\draw (3,0.5) node[circle, inner sep=1.5, fill=blue] {};
\draw (3,-1) node[circle, inner sep=1.5, fill=blue] {};

\draw (-3,0.5) node[circle, inner sep=1.5, fill=blue] {};
\draw (-3,-1) node[circle, inner sep=1.5, fill=blue] {};

\end{tikzpicture}
\end{center}

\medskip

The Freudenthal compactification, $\gamma G$, of $G$ adds two ends. Then, as shown in the diagram, $\gamma G$ has an open Eulerian path from one end to the other.

\bigskip

\begin{center}
\begin{tikzpicture}[thick]

\draw (-4.5,0) node[circle, inner sep=1.5, fill=red] {};
\draw (4.5,0) node[circle, inner sep=1.5, fill=red] {};

\draw[dotted] (-4.3,0) -- (-4,0);
\draw[dotted] (4.3,0) -- (4,0);

\draw[green,->] (0,-1) -- (0,0);
\draw[green] (0,0) -- (0,1);

\draw[blue] (-4.2,0.2) .. controls (-3.75,0.5) and (-1.5,1) .. (0,1)
node[circle,pos=0.15,fill=blue,inner sep=1.5] (p1) {}
node[circle,pos=0.25,fill=blue,inner sep=1.5] (p2) {}
node[circle,pos=0.35,fill=blue,inner sep=1.5] (p3) {}
node[circle,pos=0.5,fill=blue,inner sep=1.5] (p4) {}
node[circle,pos=0.7,fill=blue,inner sep=1.5] (p5) {} 
node[pos=0.85,->,sloped] {\tikz \draw[->,thick] (0,0) -- ++(0.1,0);}

;

\draw[<-,green!50!blue] (-4.2,-0.2) .. controls (-3.75,-0.5) and (-1.5,-1) .. (0,-1)
node[circle,pos=0.15,fill=blue,inner sep=1.5] (n1) {}
node[circle,pos=0.25,fill=blue,inner sep=1.5] (n2) {}
node[circle,pos=0.35,fill=blue,inner sep=1.5] (n3) {}
node[circle,pos=0.5,fill=blue,inner sep=1.5] (n4) {}
node[circle,pos=0.7,fill=blue,inner sep=1.5] (n5) {}
;

\draw (0,1) node[circle, inner sep=1.5, fill=blue] (p6) {};
\draw (0,-1) node[circle, inner sep=1.5, fill=blue] (n6) {};

\draw[green] (p1) -- (n1);
\draw[green] (p1) -- (n2);

\draw[green] (p2) -- (n2);
\draw[green] (p2) -- (n3);

\draw[green] (p3) -- (n3);
\draw[green] (p3) -- (n4);

\draw[green] (p4) -- (n4);
\draw[green] (p4) -- (n5);

\draw[green] (p5) -- (n5);
\draw[green] (p5) -- (n6)
node[midway,->,sloped] {\tikz \draw[->,thick] (0,0) -- ++(0.1,0);}
;

\draw[<-,blue] (4.2,0.2) .. controls (3.75,0.5) and (1.5,1) .. (0,1)
node[circle,pos=0.15,fill=blue,inner sep=1.5] (p1) {}
node[circle,pos=0.25,fill=blue,inner sep=1.5] (p2) {}
node[circle,pos=0.35,fill=blue,inner sep=1.5] (p3) {}
node[circle,pos=0.5,fill=blue,inner sep=1.5] (p4) {}
node[circle,pos=0.7,fill=blue,inner sep=1.5] (p5) {}
;

\draw[blue!50!green] (4.2,-0.2) .. controls (3.75,-0.5) and (1.5,-1) .. (0,-1)
node[circle,pos=0.15,fill=blue,inner sep=1.5] (n1) {}
node[circle,pos=0.25,fill=blue,inner sep=1.5] (n2) {}
node[circle,pos=0.35,fill=blue,inner sep=1.5] (n3) {}
node[circle,pos=0.5,fill=blue,inner sep=1.5] (n4) {}
node[circle,pos=0.7,fill=blue,inner sep=1.5] (n5) {}
node[pos=0.85,->,sloped] {\tikz \draw[<-,thick] (0,0) -- ++(0.1,0);}

;

\draw (0,1) node[circle, inner sep=1.5, fill=blue] (p6) {};
\draw (0,-1) node[circle, inner sep=1.5, fill=blue] (n6) {};

\draw[green] (p1) -- (n1);
\draw[green] (n1) -- (p2);

\draw[green] (p2) -- (n2);
\draw[green] (n2) -- (p3);

\draw[green] (p3) -- (n3);
\draw[green] (n3) -- (p4);

\draw[green] (p4) -- (n4);
\draw[green] (n4) -- (p5);

\draw[green] (p5) -- (n5);
\draw[green] (p6) -- (n5)
node[midway,->,sloped] {\tikz \draw[->,thick] (0,0) -- ++(0.1,0);}
;

\end{tikzpicture}
\end{center}

It follows from Theorem~(B) that the two ends are odd. We now demonstrate that the left end is odd from the definition. 
For the clopen neighborhood $A$ take the left end along with all vertices to the left of some rung of the ladder. Now consider an arbitrary  clopen $C$ containing the left end and contained in $A$ (depicted by the green vertices in the diagram below). 
Then $C$ is the disjoint union of a $C_0$ which contains the end and all vertices to the left of a rung, and a $C_1$ which is a finite subset of $A \setminus C_0$. 
In the diagram we see that the number of edges from $C$ to $V \setminus C$ is $9$, which is odd. 
In general, if we identify $C_0$ (and all edges between members of $C_0$) to a vertex $v$ and identify $V \setminus A$ to a vertex $w$, then we get a finite graph with exactly two odd vertices (namely, $v$ and $w$, both of degree $3$). 
Hence from the equivalence of (ii) and (iii) of the graph version of Theorem~(B), we see that the number of edges from $C$ to $V \setminus C$ is odd -- as required for the left end to be odd.

\medskip

\begin{center}
\begin{tikzpicture}

\draw[draw=green!15, fill=green!10] (-4.8,-1.1) -- (-4.8,1.1) -- (1.9,1.1) -- (1.9,-1.1) -- cycle;  

\draw[draw=red!15, fill=red!10] (2.1,-1.1) -- (2.1,1.1) -- (4.8,1.1) -- (4.8,-1.1) -- cycle; 

\draw[draw=green!35, fill=green!30] (-4.6,-0.8) -- (-4.6,0.8) -- (-2,0.8) -- (-2,-0.8) -- cycle;

\draw (1.7,1.1) node {$A$};
\draw (2.6,1.1) node {$V \setminus A$};

\draw (-4.4,0.8) node {$C_0$};

\draw (-4.5,0) node[circle, inner sep=1.5,fill=green!50!black] {};
\draw (4.5,0) node[circle, inner sep=1.5,fill=red!50!black] {};

\draw[dotted] (-4.3,0) -- (-4,0);
\draw[dotted] (4.3,0) -- (4,0);

\draw[blue, very thick] (0,-1) -- (0,1);

\draw (-4.2,0.2) .. controls (-3.75,0.5) and (-1.5,1) .. (0,1)
node[circle,pos=0.15,fill=green!50!black,inner sep=1.5] (p1) {}
node[circle,pos=0.25,fill=green!50!black,inner sep=1.5] (p2) {}
node[circle,pos=0.35,fill=green!50!black,inner sep=1.5] (p3) {}
node[circle,pos=0.5,fill=green!50!black,inner sep=1.5] (p4) {}
node[circle,pos=0.7,fill=green!80!black,inner sep=1.5] (p5) {} 
;

\draw (-4.2,-0.2) .. controls (-3.75,-0.5) and (-1.5,-1) .. (0,-1)
node[circle,pos=0.15,fill=green!50!black,inner sep=1.5] (n1) {}
node[circle,pos=0.25,fill=green!50!black,inner sep=1.5] (n2) {}
node[circle,pos=0.35,fill=green!50!black,inner sep=1.5] (n3) {}
node[circle,pos=0.5,fill=green!50!black,inner sep=1.5] (n4) {}
node[circle,pos=0.7,fill=red!80!black,inner sep=1.5] (n5) {}
;

\draw[blue, very thick] (n4) -- (n5); 

\draw (0,1) node[circle, inner sep=1.5, fill=red!80!black] (p6) {};
\draw (0,-1) node[circle, inner sep=1.5, fill=red!80!black] (n6) {};

\draw (p1) -- (n1);
\draw (p1) -- (n2);

\draw (p2) -- (n2);
\draw (p2) -- (n3);

\draw (p3) -- (n3);
\draw (p3) -- (n4);

\draw (p4) -- (n4);
\draw[blue,very thick] (p4) -- (n5);

\draw[blue,very thick] (p5) -- (n5);
\draw (p5) -- (n6)
;

\draw[blue,very thick] (p5) -- (p6);
\draw[blue,very thick] (n5) -- (n6);

\draw (4.2,0.2) .. controls (3.75,0.5) and (1.5,1) .. (0,1)
node[circle,pos=0.15,fill=red!50!black,inner sep=1.5] (rp1) {}
node[circle,pos=0.25,fill=red!50!black,inner sep=1.5] (rp2) {}
node[circle,pos=0.35,fill=red!50!black,inner sep=1.5] (rp3) {}
node[circle,pos=0.5,fill=red!50!black,inner sep=1.5] (rp4) {}
node[circle,pos=0.7,fill=red!80!black,inner sep=1.5] (rp5) {}
;

\draw (4.2,-0.2) .. controls (3.75,-0.5) and (1.5,-1) .. (0,-1)
node[circle,pos=0.15,fill=red!50!black,inner sep=1.5] (rn1) {}
node[circle,pos=0.25,fill=red!50!black,inner sep=1.5] (rn2) {}
node[circle,pos=0.35,fill=red!50!black,inner sep=1.5] (rn3) {}
node[circle,pos=0.5,fill=red!50!black,inner sep=1.5] (rn4) {}
node[circle,pos=0.7,fill=green!80!black,inner sep=1.5] (rn5) {}
;

\draw (0,1) node[circle, inner sep=1.5, fill=red!80!black] (rp6) {};
\draw (0,-1) node[circle, inner sep=1.5, fill=green!80!black] (rn6) {};

\draw (rp1) -- (rn1);
\draw (rn1) -- (rp2);

\draw (rp2) -- (rn2);
\draw (rn2) -- (rp3);

\draw (rp3) -- (rn3);
\draw (rn3) -- (rp4);

\draw (rp4) -- (rn4);
\draw (rn4) -- (rp5);

\draw[blue,very thick]  (rp5) -- (rn5);

\draw[blue, very thick]  (rp6) -- (rn5)
;
\draw[blue, very thick]  (rn5) -- (rn4);

\end{tikzpicture}
\end{center}

\bigskip

For the next two examples let $C$ denote the standard `middle thirds' Cantor subset of $I$.

\subsubsection*{Example 2.} The Cantor  bouquet of semi-circles, $\text{CBS}$. The vertices are $C \times \{0\}$ in the plane, along with semi-circular edges centered at the midpoint of each removed open interval.  Note that $\text{CBS}$ is not a Freudenthal compactification of graph.

\begin{center}
\begin{tikzpicture}

\draw[fill=green!10,draw=green!15] (-0.5,-0.5) -- (-0.5,2.5) -- (5,2.5) -- (5,-0.5) -- (-0.5,-0.5);

\draw[fill=green!30,draw=green!35] (-0.25,-0.25) -- (-0.25,1.07) -- (2.14,1.07) -- (2.14,-0.25) -- (-0.25,-0.25); 

\draw[thick,blue,-<] (10,0) arc [start angle=0, end angle=90, radius=5];
\draw  (2.8,4) node {$1$};
\draw[thick,blue] (5,5) arc [start angle=90, end angle=180, radius=5];

\draw[thick,blue!60!black,-<] (4.28,0) arc [start angle=0, end angle=90, radius=2.1428];
\draw  (1.25,1.6) node {$2$};
\draw[thick,blue!60!black] (2.1428,2.1428) arc  [start angle=90, end angle=180, radius=2.1428]; 

\semicircle{7.85714285714}{0}{2.14285714286}
\semicircle{0.918367346939}{0}{0.918367346939}
\semicircle{6.63265306122}{0}{0.918367346939}
\semicircle{3.36734693878}{0}{0.918367346939}
\semicircle{9.08163265306}{0}{0.918367346939}
\semicircle{0.393586005831}{0}{0.393586005831}
\semicircle{6.10787172012}{0}{0.393586005831}
\semicircle{2.84256559767}{0}{0.393586005831}
\semicircle{8.55685131195}{0}{0.393586005831}
\semicircle{1.44314868805}{0}{0.393586005831}
\semicircle{7.15743440233}{0}{0.393586005831}
\semicircle{3.89212827988}{0}{0.393586005831}
\semicircle{9.60641399417}{0}{0.393586005831}
\semicircle{0.168679716785}{0}{0.168679716785}
\semicircle{5.88296543107}{0}{0.168679716785}
\semicircle{2.61765930862}{0}{0.168679716785}
\semicircle{8.33194502291}{0}{0.168679716785}
\semicircle{1.218242399}{0}{0.168679716785}
\semicircle{6.93252811329}{0}{0.168679716785}
\semicircle{3.66722199084}{0}{0.168679716785}
\semicircle{9.38150770512}{0}{0.168679716785}
\semicircle{0.618492294877}{0}{0.168679716785}
\semicircle{6.33277800916}{0}{0.168679716785}
\semicircle{3.06747188671}{0}{0.168679716785}
\semicircle{8.781757601}{0}{0.168679716785}
\semicircle{1.66805497709}{0}{0.168679716785}
\semicircle{7.38234069138}{0}{0.168679716785}
\semicircle{4.11703456893}{0}{0.168679716785}
\semicircle{9.83132028322}{0}{0.168679716785}
\semicircle{0.0722913071934}{0}{0.0722913071934}
\semicircle{5.78657702148}{0}{0.0722913071934}
\semicircle{2.52127089903}{0}{0.0722913071934}
\semicircle{8.23555661332}{0}{0.0722913071934}
\semicircle{1.12185398941}{0}{0.0722913071934}
\semicircle{6.83613970369}{0}{0.0722913071934}
\semicircle{3.57083358125}{0}{0.0722913071934}
\semicircle{9.28511929553}{0}{0.0722913071934}
\semicircle{0.522103885286}{0}{0.0722913071934}
\semicircle{6.23638959957}{0}{0.0722913071934}
\semicircle{2.97108347712}{0}{0.0722913071934}
\semicircle{8.68536919141}{0}{0.0722913071934}
\semicircle{1.5716665675}{0}{0.0722913071934}
\semicircle{7.28595228179}{0}{0.0722913071934}
\semicircle{4.02064615934}{0}{0.0722913071934}
\semicircle{9.73493187362}{0}{0.0722913071934}
\semicircle{0.265068126376}{0}{0.0722913071934}
\semicircle{5.97935384066}{0}{0.0722913071934}
\semicircle{2.71404771821}{0}{0.0722913071934}
\semicircle{8.4283334325}{0}{0.0722913071934}
\semicircle{1.31463080859}{0}{0.0722913071934}
\semicircle{7.02891652288}{0}{0.0722913071934}
\semicircle{3.76361040043}{0}{0.0722913071934}
\semicircle{9.47789611471}{0}{0.0722913071934}
\semicircle{0.714880704468}{0}{0.0722913071934}
\semicircle{6.42916641875}{0}{0.0722913071934}
\semicircle{3.16386029631}{0}{0.0722913071934}
\semicircle{8.87814601059}{0}{0.0722913071934}
\semicircle{1.76444338668}{0}{0.0722913071934}
\semicircle{7.47872910097}{0}{0.0722913071934}
\semicircle{4.21342297852}{0}{0.0722913071934}
\semicircle{9.92770869281}{0}{0.0722913071934}
\semicircle{0.0309819887972}{0}{0.0309819887972}
\semicircle{5.74526770308}{0}{0.0309819887972}
\semicircle{2.47996158063}{0}{0.0309819887972}
\semicircle{8.19424729492}{0}{0.0309819887972}
\semicircle{1.08054467101}{0}{0.0309819887972}
\semicircle{6.7948303853}{0}{0.0309819887972}
\semicircle{3.52952426285}{0}{0.0309819887972}
\semicircle{9.24380997714}{0}{0.0309819887972}
\semicircle{0.48079456689}{0}{0.0309819887972}
\semicircle{6.19508028118}{0}{0.0309819887972}
\semicircle{2.92977415873}{0}{0.0309819887972}
\semicircle{8.64405987301}{0}{0.0309819887972}
\semicircle{1.53035724911}{0}{0.0309819887972}
\semicircle{7.24464296339}{0}{0.0309819887972}
\semicircle{3.97933684094}{0}{0.0309819887972}
\semicircle{9.69362255523}{0}{0.0309819887972}
\semicircle{0.22375880798}{0}{0.0309819887972}
\semicircle{5.93804452227}{0}{0.0309819887972}
\semicircle{2.67273839982}{0}{0.0309819887972}
\semicircle{8.3870241141}{0}{0.0309819887972}
\semicircle{1.2733214902}{0}{0.0309819887972}
\semicircle{6.98760720448}{0}{0.0309819887972}
\semicircle{3.72230108203}{0}{0.0309819887972}
\semicircle{9.43658679632}{0}{0.0309819887972}
\semicircle{0.673571386072}{0}{0.0309819887972}
\semicircle{6.38785710036}{0}{0.0309819887972}
\semicircle{3.12255097791}{0}{0.0309819887972}
\semicircle{8.83683669219}{0}{0.0309819887972}
\semicircle{1.72313406829}{0}{0.0309819887972}
\semicircle{7.43741978257}{0}{0.0309819887972}
\semicircle{4.17211366012}{0}{0.0309819887972}
\semicircle{9.88639937441}{0}{0.0309819887972}
\semicircle{0.11360062559}{0}{0.0309819887972}
\semicircle{5.82788633988}{0}{0.0309819887972}
\semicircle{2.56258021743}{0}{0.0309819887972}
\semicircle{8.27686593171}{0}{0.0309819887972}
\semicircle{1.16316330781}{0}{0.0309819887972}
\semicircle{6.87744902209}{0}{0.0309819887972}
\semicircle{3.61214289964}{0}{0.0309819887972}
\semicircle{9.32642861393}{0}{0.0309819887972}
\semicircle{0.563413203682}{0}{0.0309819887972}
\semicircle{6.27769891797}{0}{0.0309819887972}
\semicircle{3.01239279552}{0}{0.0309819887972}
\semicircle{8.7266785098}{0}{0.0309819887972}
\semicircle{1.6129758859}{0}{0.0309819887972}
\semicircle{7.32726160018}{0}{0.0309819887972}
\semicircle{4.06195547773}{0}{0.0309819887972}
\semicircle{9.77624119202}{0}{0.0309819887972}
\semicircle{0.306377444772}{0}{0.0309819887972}
\semicircle{6.02066315906}{0}{0.0309819887972}
\semicircle{2.75535703661}{0}{0.0309819887972}
\semicircle{8.46964275089}{0}{0.0309819887972}
\semicircle{1.35594012699}{0}{0.0309819887972}
\semicircle{7.07022584127}{0}{0.0309819887972}
\semicircle{3.80491971882}{0}{0.0309819887972}
\semicircle{9.51920543311}{0}{0.0309819887972}
\semicircle{0.756190022865}{0}{0.0309819887972}
\semicircle{6.47047573715}{0}{0.0309819887972}
\semicircle{3.2051696147}{0}{0.0309819887972}
\semicircle{8.91945532899}{0}{0.0309819887972}
\semicircle{1.80575270508}{0}{0.0309819887972}
\semicircle{7.52003841937}{0}{0.0309819887972}
\semicircle{4.25473229692}{0}{0.0309819887972}
\semicircle{9.9690180112}{0}{0.0309819887972}

\end{tikzpicture}
\end{center}
The vertex $\mathbf{0}=(0,0)$ is neither odd nor even, and hence $\text{CBS}$ is not Eulerian. Indeed, as indicated on the diagram, there is one (odd) edge connecting all the vertices in the `left half' of the vertices to its complement (the `right half'), but two (even) edges connecting the `left quarter' to its complement.
Similarly we see that  every clopen neighborhood of $\mathbf{0}$ contains two clopen neighborhoods of $\mathbf{0}$ of which  one has an odd  number of edges to its complement, and the other an even number. 

\subsubsection*{Example 3.} The  Cantor bouquet of circles, $\text{CBC}$, can be obtained from the Cantor bouquet of semi-circles by reflecting it in the real axis. One can check that all vertices are even. The diagram illustrates an Eulerian loop in $\text{CBC}$.

\bigskip

\begin{center}
\begin{tikzpicture}

\draw (0.1,2.8) node {$I$};

\draw (0.4,-2) node {$\text{CBC}$};

\draw[very thick,dotted,-|] (0,3.25) -- (10,3.25);

\draw [blue!100!green,very thick,|->] (0,3.25)  --  (1.25,3.25); \draw [blue!100!green, very thick,-|] (1.25,3.25) --  (2.5,3.25); 

\draw [blue!75!green,very thick,->] (2.5,3.25)  --  (2.825,3.25); \draw [blue!75!green, very thick,-|] (2.825,3.25) --  (3.125,3.25);

\draw [blue!62!green,very thick,|->] (3.75,3.25)  --   (4.0625,3.25); \draw [blue!62!green, very thick,-|] (4.0625,3.25) --  (4.375,3.25);

\draw [blue!50!green,very thick,|->] (5,3.25)  --  (6.25,3.25); \draw [blue!50!green, very thick,-|] (6.25,3.25) --  (7.5,3.25); 

\draw [blue!37!green,very thick,->] (7.5,3.25)  --  (7.825,3.25); \draw [blue!37!green, very thick,-|] (7.825,3.25) --  (8.125,3.25);

\draw [blue!6!green,very thick,|->] (8.75,3.25)  --   (9.0625,3.25); \draw [blue!6!green, very thick,-|] (9.0625,3.25) --  (9.375,3.25);

\draw [blue!100!green, very thick, ->] (0,0) .. controls (0,2) and (3,2.5) ..  (5,2.5);
\draw [blue!100!green, very thick] (5,2.5) .. controls (7,2.5) and (10,2) ..  (10,0);

\draw [blue!50!green, very thick, ->] (10,0) .. controls (10,-2) and (7,-2.5) ..  (5,-2.5);
\draw [blue!50!green, very thick] (5,-2.5) .. controls (3,-2.5) and (0,-2) ..  (0,0);

\draw [blue!75!green, very thick, ->] (10,0) .. controls (10,-1) and (8.7429,-1) ..  (7.85714,-1);
\draw [blue!75!green, very thick] (7.85714,-1) .. controls (7,-1) and (5.71428,-1) ..  (5.714285714,0);

\draw [blue!62!green, very thick, ->] (5.714285714,0)  .. controls (5.71428,1) and (7,1)  ..  (7.85714,1);
\draw [blue!62!green, very thick] (7.85714,1)  .. controls  (8.7429,1) and (10,1) .. (10,0);

\draw [blue!37!green, very thick, ->] (0,0)  .. controls (0,1) and (1.2857,1)  ..  (2.142857,1);
\draw [blue!37!green, very thick] (2.142857,1)  .. controls  (3,1) and (4.285,1) .. (4.2857,0);

\draw [blue!6!green, very thick, ->] (4.2857,0)   .. controls (4.285,-1) and (3,-1)  .. (2.142857,-1);
\draw [blue!6!green, very thick] (2.142857,-1)   .. controls (1.2857,-1) and (0,-1)  ..  (0,0);

\ellipse{0.918367346939}{0}{0.918367346939}
\ellipse{6.63265306122}{0}{0.918367346939}
\ellipse{3.36734693878}{0}{0.918367346939}
\ellipse{9.08163265306}{0}{0.918367346939}
\ellipse{0.393586005831}{0}{0.393586005831}
\ellipse{6.10787172012}{0}{0.393586005831}
\ellipse{2.84256559767}{0}{0.393586005831}
\ellipse{8.55685131195}{0}{0.393586005831}
\ellipse{1.44314868805}{0}{0.393586005831}
\ellipse{7.15743440233}{0}{0.393586005831}
\ellipse{3.89212827988}{0}{0.393586005831}
\ellipse{9.60641399417}{0}{0.393586005831}
\ellipse{0.168679716785}{0}{0.168679716785}
\ellipse{5.88296543107}{0}{0.168679716785}
\ellipse{2.61765930862}{0}{0.168679716785}
\ellipse{8.33194502291}{0}{0.168679716785}
\ellipse{1.218242399}{0}{0.168679716785}
\ellipse{6.93252811329}{0}{0.168679716785}
\ellipse{3.66722199084}{0}{0.168679716785}
\ellipse{9.38150770512}{0}{0.168679716785}
\ellipse{0.618492294877}{0}{0.168679716785}
\ellipse{6.33277800916}{0}{0.168679716785}
\ellipse{3.06747188671}{0}{0.168679716785}
\ellipse{8.781757601}{0}{0.168679716785}
\ellipse{1.66805497709}{0}{0.168679716785}
\ellipse{7.38234069138}{0}{0.168679716785}
\ellipse{4.11703456893}{0}{0.168679716785}
\ellipse{9.83132028322}{0}{0.168679716785}
\ellipse{0.0722913071934}{0}{0.0722913071934}
\ellipse{5.78657702148}{0}{0.0722913071934}
\ellipse{2.52127089903}{0}{0.0722913071934}
\ellipse{8.23555661332}{0}{0.0722913071934}
\ellipse{1.12185398941}{0}{0.0722913071934}
\ellipse{6.83613970369}{0}{0.0722913071934}
\ellipse{3.57083358125}{0}{0.0722913071934}
\ellipse{9.28511929553}{0}{0.0722913071934}
\ellipse{0.522103885286}{0}{0.0722913071934}
\ellipse{6.23638959957}{0}{0.0722913071934}
\ellipse{2.97108347712}{0}{0.0722913071934}
\ellipse{8.68536919141}{0}{0.0722913071934}
\ellipse{1.5716665675}{0}{0.0722913071934}
\ellipse{7.28595228179}{0}{0.0722913071934}
\ellipse{4.02064615934}{0}{0.0722913071934}
\ellipse{9.73493187362}{0}{0.0722913071934}
\ellipse{0.265068126376}{0}{0.0722913071934}
\ellipse{5.97935384066}{0}{0.0722913071934}
\ellipse{2.71404771821}{0}{0.0722913071934}
\ellipse{8.4283334325}{0}{0.0722913071934}
\ellipse{1.31463080859}{0}{0.0722913071934}
\ellipse{7.02891652288}{0}{0.0722913071934}
\ellipse{3.76361040043}{0}{0.0722913071934}
\ellipse{9.47789611471}{0}{0.0722913071934}
\ellipse{0.714880704468}{0}{0.0722913071934}
\ellipse{6.42916641875}{0}{0.0722913071934}
\ellipse{3.16386029631}{0}{0.0722913071934}
\ellipse{8.87814601059}{0}{0.0722913071934}
\ellipse{1.76444338668}{0}{0.0722913071934}
\ellipse{7.47872910097}{0}{0.0722913071934}
\ellipse{4.21342297852}{0}{0.0722913071934}
\ellipse{9.92770869281}{0}{0.0722913071934}
\ellipse{0.0309819887972}{0}{0.0309819887972}
\ellipse{5.74526770308}{0}{0.0309819887972}
\ellipse{2.47996158063}{0}{0.0309819887972}
\ellipse{8.19424729492}{0}{0.0309819887972}
\ellipse{1.08054467101}{0}{0.0309819887972}
\ellipse{6.7948303853}{0}{0.0309819887972}
\ellipse{3.52952426285}{0}{0.0309819887972}
\ellipse{9.24380997714}{0}{0.0309819887972}
\ellipse{0.48079456689}{0}{0.0309819887972}
\ellipse{6.19508028118}{0}{0.0309819887972}
\ellipse{2.92977415873}{0}{0.0309819887972}
\ellipse{8.64405987301}{0}{0.0309819887972}
\ellipse{1.53035724911}{0}{0.0309819887972}
\ellipse{7.24464296339}{0}{0.0309819887972}
\ellipse{3.97933684094}{0}{0.0309819887972}
\ellipse{9.69362255523}{0}{0.0309819887972}
\ellipse{0.22375880798}{0}{0.0309819887972}
\ellipse{5.93804452227}{0}{0.0309819887972}
\ellipse{2.67273839982}{0}{0.0309819887972}
\ellipse{8.3870241141}{0}{0.0309819887972}
\ellipse{1.2733214902}{0}{0.0309819887972}
\ellipse{6.98760720448}{0}{0.0309819887972}
\ellipse{3.72230108203}{0}{0.0309819887972}
\ellipse{9.43658679632}{0}{0.0309819887972}
\ellipse{0.673571386072}{0}{0.0309819887972}
\ellipse{6.38785710036}{0}{0.0309819887972}
\ellipse{3.12255097791}{0}{0.0309819887972}
\ellipse{8.83683669219}{0}{0.0309819887972}
\ellipse{1.72313406829}{0}{0.0309819887972}
\ellipse{7.43741978257}{0}{0.0309819887972}
\ellipse{4.17211366012}{0}{0.0309819887972}
\ellipse{9.88639937441}{0}{0.0309819887972}
\ellipse{0.11360062559}{0}{0.0309819887972}
\ellipse{5.82788633988}{0}{0.0309819887972}
\ellipse{2.56258021743}{0}{0.0309819887972}
\ellipse{8.27686593171}{0}{0.0309819887972}
\ellipse{1.16316330781}{0}{0.0309819887972}
\ellipse{6.87744902209}{0}{0.0309819887972}
\ellipse{3.61214289964}{0}{0.0309819887972}
\ellipse{9.32642861393}{0}{0.0309819887972}
\ellipse{0.563413203682}{0}{0.0309819887972}
\ellipse{6.27769891797}{0}{0.0309819887972}
\ellipse{3.01239279552}{0}{0.0309819887972}
\ellipse{8.7266785098}{0}{0.0309819887972}
\ellipse{1.6129758859}{0}{0.0309819887972}
\ellipse{7.32726160018}{0}{0.0309819887972}
\ellipse{4.06195547773}{0}{0.0309819887972}
\ellipse{9.77624119202}{0}{0.0309819887972}
\ellipse{0.306377444772}{0}{0.0309819887972}
\ellipse{6.02066315906}{0}{0.0309819887972}
\ellipse{2.75535703661}{0}{0.0309819887972}
\ellipse{8.46964275089}{0}{0.0309819887972}
\ellipse{1.35594012699}{0}{0.0309819887972}
\ellipse{7.07022584127}{0}{0.0309819887972}
\ellipse{3.80491971882}{0}{0.0309819887972}
\ellipse{9.51920543311}{0}{0.0309819887972}
\ellipse{0.756190022865}{0}{0.0309819887972}
\ellipse{6.47047573715}{0}{0.0309819887972}
\ellipse{3.2051696147}{0}{0.0309819887972}
\ellipse{8.91945532899}{0}{0.0309819887972}
\ellipse{1.80575270508}{0}{0.0309819887972}
\ellipse{7.52003841937}{0}{0.0309819887972}
\ellipse{4.25473229692}{0}{0.0309819887972}
\ellipse{9.9690180112}{0}{0.0309819887972}

\end{tikzpicture}
\end{center}

\medskip

Suppose $f\colon I \to X$ is a standard path in a graph-like continuum $X$ with vertices $V$ and (open) edges $(e_n)_n$. Then $f^{-1}(V)$ is a closed nowhere dense subset of $I$, and its complement, $f^{-1}\left( \bigcup_n e_n\right)$ is dense and a disjoint union of open intervals. This countable family, $\{ f^{-1} (e_n) : n \in \N\}$ inherits an order from the order on $I$.
So to every path $f$ we can associate a countable linear order $L_f$, which we informally call the \emph{shape} of $f$. 

To illustrate this, consider $L=L_f$ where $f$ is the Eulerian loop in the Cantor bouquet of circles diagrammed above.
Then $f$ traverses the top edge from left to right, covers the right-hand copy of $\text{CBC}$, traverses the bottom edge from right to left, and then covers the left-hand copy of $\text{CBC}$. 
So $L$ satisfies the equation $L=1+L+1+L$. It follows that $L$ is an infinite ordinal. Thus $L=1+L$, and we see $L=L+L$.
The first infinite ordinal which is a fixed point under addition of linear orders is the ordinal $\omega^\omega$. Hence $L=\omega^\omega$. 

We now see how to construct for each countable linear order $L$ a graph-like continuum $X_L$ with an Eulerian loop $f$ so that $L_f=L$.
To do so recall: every countable linearly ordered set $L$ can be realized (is order isomorphic to) a countable family of disjoint open subintervals of $I$, with dense union. For further material on the interaction of linear orders and graph-like compacta, see \cite[\S4]{infinitematroids}.

Given a line segment, $S$, in the plane the `circle with diameter $S$' is the circle with center the midpoint of the line segment, and radius half the length of the segment.

\subsubsection*{Example 4.}\label{XL} Let $L$ be a countable linear order. Fix a family $\mathcal{U}$ of pairwise disjoint open subintervals of $I$, with dense union, which is order isomorphic to $L$.
Define $X_L$ to be the subspace of the plane obtained by starting with $X=I \times \{0\}$, and for each $U$ in $\mathcal{U}$, remove $U \times \{0\}$ from $X$ and add the circle with diameter $U \times \{0\}$.

The Eulerian loops on $X_L$ are naturally bijective with all functions $\rho : L \to \{\pm 1\}$.
To see this take any $\rho : L \to \{\pm 1\}$. Since $\mathcal{U}$ and $L$ are isomorphic we can think that the domain of $\rho$ is actually $\mathcal{U}$. 
Define $g_\rho : [0,1] \to X_L$ by requiring (i) $g(t)=t$  on $I \setminus \bigcup \mathcal{U}$, and (ii) on $U$ in $\mathcal{U}$ the path $g$ traverses the top (resp. bottom) semi-circle in $X_L$ corresponding to $U$ if $\rho (U)=+1$ (resp. $\rho(U)=-1$).
Now define $f_\rho$ -- the desired Eulerian loop -- by $f_\rho(t)=g_\rho(2t)$ on $[0,1/2]$ and $f_\rho(t)=g_{-\rho}(2-2t)$ on $[1/2,1]$. Informally, on $[0,1/2]$ the path $f_\rho$ travels from left to right along $X_L$ crossing the circles by either taking the upper or lower semi-circles depending on $\rho$; and then on $[1/2,1]$ it travels across $X_L$ from right to left taking the opposite upper/lower semi-circles than before. 
Every Eulerian loop arises in this way, and observe that they all have the same shape, $L$. 

The following diagram depicts $X_Q$ where $Q$ is the linearly ordered set of dyadic rationals in $(0,1)$. Recall that $Q$ is order isomorphic to the rationals, $\Q$.

\bigskip

\begin{center}
\begin{tikzpicture}

\draw (0.1,1.7) node {$Q$};

\draw (0.2,-0.6) node {$X_Q$};

\draw[thin, dotted] (0,2) -- (10,2);

\draw[very thick] (0,0) -- (10,0);

\draw[fill=white, very thick] (5,0) circle [radius=1.0]; 
\draw (5,1.5) node  {$\frac{1}{2}$}; 
\draw (5,2) node[circle,fill=green,inner sep=2] {};

\draw[fill=white, thick] (2.5,0) circle [radius=0.5];
\draw (2.5,1.5) node  {$\frac{1}{2^2}$}; 
\draw (2.5,2) node[circle,fill=green,inner sep=1.5] {};
\draw[fill=white, thick] (7.5,0) circle [radius=0.5];
\draw (7.5,1.5) node  {$\frac{3}{2^2}$}; 
\draw (7.5,2) node[circle,fill=green,inner sep=1.5] {};

\draw[fill=white] (1.25,0) circle [radius=0.2];
\draw (1.25,1.5) node  {\small $\frac{1}{2^3}$}; 
\draw (1.25,2) node[circle,fill=green,inner sep=1] {};
\draw[fill=white] (3.5,0) circle [radius=0.2];
\draw (3.5,1.5) node  {\small $\frac{3}{2^3}$}; 
\draw (3.5,2) node[circle,fill=green,inner sep=1] {};
\draw[fill=white] (6.5,0) circle [radius=0.2];
\draw (6.5,1.5) node  {\small $\frac{5}{2^3}$}; 
\draw (6.5,2) node[circle,fill=green,inner sep=1] {};
\draw[fill=white] (8.75,0) circle [radius=0.2];
\draw (8.75,1.5) node  {\small $\frac{7}{2^3}$}; 
\draw (8.75,2) node[circle,fill=green,inner sep=1] {};

\draw[fill=white,thin] (0.625,0) circle [radius=0.1];
\draw[fill=white,thin] (1.725,0) circle [radius=0.1];
\draw[fill=white,thin] (3.15,0) circle [radius=0.1];
\draw[fill=white,thin] (3.85,0) circle [radius=0.1];
\draw[fill=white,thin] (6.15,0) circle [radius=0.1];
\draw[fill=white,thin] (6.85,0) circle [radius=0.1];
\draw[fill=white,thin] (8.275,0) circle [radius=0.1];
\draw[fill=white,thin] (9.375,0) circle [radius=0.1];

\end{tikzpicture}
\end{center}
The graph-like continuum $X_Q$ provides an example of the difficulties involved in na\"ively trying to lift arguments for graphs to graph-like continua. 
In the standard proof of  Theorem~(B) for graphs one moves from (iv) `the edges of the graph can be decomposed into disjoint cycles' to (i) `there is an Eulerian circuit' by amalgamating the cycles, one after another to form the circuit. Notice that in $X_Q$ there is a canonical decomposition of $X_Q$ into edge disjoint circles -- namely the circles in the definition of $X_Q$. But these circles are \emph{pairwise disjoint}. Hence there is no natural method of amalgamating them into an Eulerian loop for $X_Q$.

\subsubsection*{Example 5.} The Hawaiian earring, $H$, is also Eulerian. Unlike the $X_L$ examples above,  \emph{every} countable linear order can be realized as the $L_f$ of an Eulerian loop.
\begin{wrapfigure}{r}{4cm}

\begin{tikzpicture}
\draw[thick] (0,0) arc [start angle=-90, end angle=270, radius=2];
\draw[thick] (0,0) arc [start angle=-90, end angle=270, radius=1.5];
\draw[thick] (0,0) arc [start angle=-90, end angle=270, radius=1.1];
\draw[thick] (0,0) arc [start angle=-90, end angle=270, radius=0.75];
\draw[thick] (0,0) arc [start angle=-90, end angle=270, radius=0.5];
\draw[thick] (0,0) arc [start angle=-90, end angle=270, radius=0.35];

\draw[dotted,red,very thick] (0,0) -- (0,0.5);
\end{tikzpicture}

\end{wrapfigure} 
  Write $H$ as $\mathbf{0}=(0,0)$ (the sole  vertex) and the union of circles in the plane $C_n$, for $n \in \N$, where $C_n$ has radius $1/n$ and is tangential at $\mathbf{0}$ to the $x$-axis.  
  
We  can identify the Eulerian loops in the Hawaiian earring as follows. For any countable linear order $L$ and function $\rho \colon L \to \N \times \{\pm 1\}$ such that $\pi_1 \circ \rho \colon L \to \N$ is a bijection, there is a naturally corresponding Eulerian loop $f_\rho$ of $H$.
Indeed, given $L$ and $\rho$, let $\mathcal{U}$ be a family of pairwise disjoint open subintervals of $I$, with dense union, which is order isomorphic to $L$  (and identify them). 
Define $f_\rho$ to have value $\mathbf{0}$ on the complement of $\bigcup \mathcal{U}$, and on $U$ in $\mathcal{U}$, writing $\rho(U)=(n,i)$, it should traverse $C_n$ clockwise (respectively, anticlockwise) if $i=+1$ (respectively, $i=-1$). 
 One can check all Eulerian loops arise this way.

\section{Properties and Characterizations of Graph-like continua}\label{sec:repchar}

\subsection{Basic Properties}
Most of the following basic properties of graph-like spaces are well-known, see e.g.\ \cite{thomassenvella}. Nonetheless, it might be helpful to give a self-contained outline of the most important properties we use. 

Let $(X,V,E)$ be a compact graph-like space. We often identify the label, $e$, of an edge, with the subspace $e\times (0,1)$ of $X$.
Note that since $V$ is zero-dimensional, for every edge $e$, the closure, $\closure{e}$, of $e$ adds at most two vertices -- the ends of the edge -- and $\closure{e}$ is homeomorphic to the circle, $S^1$, or $I=[0,1]$. With this in mind, our definition of compact graph-like space is the same as the original in \cite{thomassenvella}.

A \emph{separation} $(A,B)$ of a graph like space $X$ is a partition of $V(X)$ into two disjoint clopen subsets. The \emph{cut induced by the separation} $(A,B)$ is  set of edges with one end vertex in $A$ and the other in $B$, denoted by $E(A,B)$. More generally, we call a subset $F \subset E$ a cut if there is a separation $(A,B)$ of $X$ such that $F = E(A,B)$. A \emph{multi-cut} is a partition $\mathcal{U}=\Set{U_1, U_2, \ldots , U_n}$ of $V(X)$ into pairwise disjoint clopen sets. For each two $U_i, U_j$, not necessarily different, $E(U_i,U_j)$ denotes the set of edges with one endpoint in $U_i$ and the other endpoint in $U_j$. By $X[U_i]$ we denote the \emph{induced subspace} of $X$, i.e.\ the closed graph-like subspace with vertex set $U_i$ and edge set $E(U_i,U_i)$. Finally, a clopen subset $U \subset V(X)$ is called a \emph{region} if the induced subspace $X[U]$ is connected.

\begin{lemma}
\label{finitecuts}
In a compact graph-like space, all cuts are finite.
\end{lemma}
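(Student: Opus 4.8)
The plan is to reduce the statement to a single cut and then argue by contradiction, combining compactness with the local structure $X \setminus V \cong E \times (0,1)$. Fix a separation $(A,B)$ of $V$ and let $F = E(A,B)$ be its cut; we may assume $A$ and $B$ are both non-empty, since otherwise $F \isEmpty$. Because $X$ is compact and $V$ is closed, $V$ is compact; and since $A$ and $B$ are clopen in $V$, they are disjoint compact subsets of the metric space $X$. Hence $\delta := d(A,B) > 0$.

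The key construction assigns to each crossing edge an interior point at a fixed "halfway" distance from $A$. Given $e \in F$, its closure $\overline{e}$ is an arc running from a vertex $a_e \in A$ to a vertex $b_e \in B$ (distinct, since they lie in different parts). The distance function $d(\cdot, A)\colon X \to \R$ is continuous, vanishes on $A$, and is $\geq \delta$ on $B$; restricting it to the arc $\overline{e}$ and applying the intermediate value theorem yields a point $p_e \in \overline{e}$ with $d(p_e, A) = \delta/2$. Since $0 < \delta/2 < \delta$, the point $p_e$ lies in neither $A$ nor $B$, so $p_e \notin V$, i.e.\ $p_e$ lies in the interior of the edge $e$.

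Now suppose, for contradiction, that $F$ is infinite. Choose distinct edges $e_1, e_2, \ldots \in F$ with associated points $p_{e_n}$. Compactness of $X$ produces a subsequence converging to some $x \in X$, and by continuity $d(x, A) = \delta/2$, so $x \notin V$; thus $x$ lies in the interior of some edge $e$. The final step exploits that $E$ is discrete: under the homeomorphism $X \setminus V \cong E \times (0,1)$, writing $x = (e,s)$, the set $\Set{e} \times (s-\epsilon, s+\epsilon)$ is a connected open neighborhood of $x$ contained entirely within the single edge $e$. This neighborhood must contain all but finitely many terms of the convergent subsequence; but those terms lie in pairwise disjoint edges, and at most one of them can equal $e$ -- a contradiction. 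Hence $F$ is finite.

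I expect the main obstacle to be this last accumulation step: correctly extracting from the limit point the topological information that forces infinitely many distinct edges to crowd into one. Everything hinges on the discreteness of $E$ (so that individual edges are relatively open in $X \setminus V$) together with the guarantee that the limit point avoids $V$. The latter is exactly why one chooses the point at distance precisely $\delta/2$ rather than an arbitrary interior point: this keeps the limit off $V$, where the clean edge structure would otherwise fail.
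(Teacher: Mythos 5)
Your proof is correct and takes essentially the same approach as the paper: where the paper uses normality to separate $A$ from $B$ by disjoint open sets and picks a point of each crossing edge outside their union, you use the metric and the intermediate value theorem to pin each chosen point at distance $\delta/2$ from $A$. In both arguments compactness then yields an accumulation point that avoids the vertex set, hence lies interior to a single edge, contradicting that the chosen points come from pairwise distinct (disjoint, relatively open) edges --- your write-up simply makes explicit the ``closed discrete'' step that the paper leaves implicit.
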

\begin{proof}
Suppose there is an infinite cut $F=\set{f_n}:{n \in \N}$ induced by a separation $(A,B)$ of a graph-like space $X$. Then $A$ and $B$ are disjoint closed subsets of $X$, so by normality there are disjoint open subsets $U \supseteq A$ and $V \supseteq B$. Since edges are connected, there exist $x_n \in f_n \setminus \p{U \cup V}$ for all $n$. It follows that $\set{x_n}:{n \in \N}$ is an infinite closed discrete subset, contradicting compactness.
\end{proof}

\begin{lemma}
\label{basisregion}
Let $X$ be a compact graph-like space. For every vertex $v$ of $X$ and any open neighborhood $U$ of $v$, there is a clopen $C \subset V(X)$ such that $v \in C$ and $X[C] \subset U$. Moreover, if $X$ is connected, then $C$ can be chosen to be a region.
\end{lemma}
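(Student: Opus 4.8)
The plan is to prove the two assertions separately, using compactness for the first and the finiteness of cuts (Lemma~\ref{finitecuts}) as the main engine for the second. Throughout I write $V = V(X)$ and use that, since $V$ is compact and zero-dimensional, the clopen subsets of $V$ containing $v$ form a neighborhood base at $v$ in $V$ that is closed under finite intersections.

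For the first assertion I would reduce the problem to a statement about a downward-directed family of compact sets. Consider $\mathcal{F} = \{X[C] : C \subseteq V \text{ clopen}, \ v \in C\}$. Each $X[C]$ is closed in $X$, hence compact, and the family is directed downwards, since for clopen $C_1, C_2 \ni v$ one has $C_1 \cap C_2$ clopen with $X[C_1 \cap C_2] \subseteq X[C_1] \cap X[C_2]$. The key claim is that $\bigcap \mathcal{F} = \{v\}$. Granting this, the compact sets $X[C] \setminus U$ have empty total intersection, so by the finite intersection property finitely many of them—and hence, by directedness, a single $X[C]$—already lie inside $U$, which is what we want. To prove the claim, take $x \neq v$ and produce a clopen $C \ni v$ with $x \notin X[C]$. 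If $x$ is a vertex, separate $x$ from $v$ by a clopen $C \ni v$ with $x \notin C$ (possible as $V$ is compact, Hausdorff and zero-dimensional); since the vertices of $X[C]$ are exactly $C$ and all endpoints of edges in $E(C,C)$ lie in $C$, we get $x \notin X[C]$. If instead $x$ lies in the interior of an edge $e$, then as $x \neq v$ the edge $e$ has an endpoint $w \neq v$, and choosing clopen $C \ni v$ with $w \notin C$ removes $e$ from $E(C,C)$, so again $x \notin X[C]$.

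For the second assertion I would start from a clopen $C_* \ni v$ with $X[C_*] \subseteq U$ produced by the first part, and refine it into a region by repeatedly discarding the part of the induced subspace not joined to $v$, all the while monitoring the (finite) cut of the current set. Put $C^{(0)} = C_*$. At stage $m$, if $X[C^{(m)}]$ is connected we are done: $C^{(m)}$ is the required region, and $X[C^{(m)}] \subseteq X[C_*] \subseteq U$. Otherwise $X[C^{(m)}]$ has a proper nonempty clopen subset $D$ with $v \in D$; being clopen, $D$ contains every edge it meets, so $D = X[C^{(m+1)}]$ with $C^{(m+1)} = D \cap V$ clopen in $V$, $v \in C^{(m+1)} \subsetneq C^{(m)}$, and—crucially—$E(C^{(m+1)}, C^{(m)} \setminus C^{(m+1)}) = \emptyset$, as no edge crosses this separation of $X[C^{(m)}]$.

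The whole point is that this refinement must terminate, and this is where finiteness of cuts and connectedness enter. Write $E^{(m)} = E(C^{(m)}, V \setminus C^{(m)})$. First, connectedness of $X$ means every partition of $V$ into two nonempty clopen pieces has a nonempty cut, for otherwise the two induced subspaces would disconnect $X$; in particular each $E^{(m)}$ (for a proper nonempty $C^{(m)}$) is nonempty. Next, the vanishing of $E(C^{(m+1)}, C^{(m)} \setminus C^{(m+1)})$ forces $E^{(m+1)} \subseteq E^{(m)}$, and this inclusion is \emph{strict}: applying connectedness to the nonempty proper clopen set $C^{(m)} \setminus C^{(m+1)}$ gives it a nonempty cut, and—having no edges to $C^{(m+1)}$—that cut runs entirely into $V \setminus C^{(m)}$, so it consists of members of $E^{(m)}$ lying outside $E^{(m+1)}$. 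Thus $E^{(0)} \supsetneq E^{(1)} \supsetneq \cdots$ is a strictly descending chain of nonempty subsets of the finite set $E^{(0)}$ (finite by Lemma~\ref{finitecuts}), which cannot continue forever. Hence at some stage $X[C^{(m)}]$ is connected, yielding the desired region.

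The main obstacle is exactly the termination of this refinement; the remainder is routine point-set topology. The resolving insight is to treat the finite cut $E^{(m)}$ as a strictly decreasing monovariant, which is precisely where Lemma~\ref{finitecuts} and the connectedness hypothesis do their work.
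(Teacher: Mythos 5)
Your proof is correct. The first half is essentially the paper's own argument: both you and the paper identify $\{v\}$ as the intersection of the induced subspaces $X[C]$ over all clopen $C \ni v$ (the paper phrases this via separations $(A,B)$ with $v \in A$), and then use compactness together with closure under finite intersections to capture a single $X[C]$ inside $U$. Where you genuinely diverge is the \emph{moreover} part. The paper does it in one step: since $E(B, V\setminus B)$ is finite by Lemma~\ref{finitecuts} and $X$ is connected, $X[B]$ has only finitely many components, each induced by a clopen vertex set, and the component containing $v$ is the desired region. You instead run an iterative refinement, splitting off a proper clopen piece containing $v$ whenever the current induced subspace is disconnected, and prove termination by exhibiting the boundary cut $E^{(m)}$ as a strictly decreasing sequence of subsets of a finite set, the strictness coming from applying connectedness of $X$ to the discarded piece. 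Both routes rest on exactly the same two inputs (finiteness of cuts and connectedness of $X$), but they distribute the work differently: the paper's one-line component argument quietly relies on the fact that every component of the compact, possibly badly disconnected, space $X[B]$ must meet an endpoint of a cut edge (this is what bounds the number of components and makes each of them clopen), whereas your monovariant bookkeeping is more pedestrian but entirely explicit and never needs to reason about components of a compact space at all. One caveat, which you share with the paper rather than fall behind it: the key identity $\bigcap \mathcal{F} = \{v\}$ --- and indeed the lemma as literally stated --- fails if $X$ has a loop at $v$ (take $X \cong S^1$ with a single vertex), since such a loop lies in $X[C]$ for every clopen $C \ni v$; your step ``the edge $e$ has an endpoint $w \neq v$'' is precisely where this breaks, just as the paper's unproved displayed identity does. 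The paper only rules out loops explicitly later (by subdividing, in the proof of Theorem~\ref{inverselimit}), and the same subdivision fix repairs both proofs.
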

\begin{proof}
Since $V(X)$ is totally disconnected we have 
$$\singleton{v} = \bigcap \set{X[A]}:{(A,B) \text{ a separation of } X, \; v \in A}.$$
Now $ \bigcap X[A] \subset U$ and compactness implies that there is a finite subcollection $A_1, \ldots, A_n$ such that for the clopen set $B=A_1\cap \cdots \cap A_n$ we have
$$v \in X[B] = X[A_1] \cap \cdots \cap X[A_n]\subset U.$$
For the moreover part, since $E(B, V \setminus B)$ is finite by Lemma~\ref{finitecuts}, it follows from connectedness of $X$ that $X[B]$ consists of finitely many connected components, say $X[B]=X[C_1] \oplus \cdots \oplus X[C_k]$, one of which contains the vertex $v$. This is our desired region $C$.
\end{proof}

\begin{defn}
Let $X$ be a graph-like space and $\mathcal{U}$ be a multi-cut of $X$. The \emph{multi-graph associated with $\mathcal{U}$} is the quotient 
space $G(\script{U})=X / \set{X[U]}:{U \in \script{U}}$. The map $\pi_\script{U} \colon X \to G(\script{U})$ denotes the corresponding quotient map.
\end{defn}

We remark that $G(\script{U})$ is indeed a finite multi-graph. The identified $X[U]$ form a finite collection of vertices, which are connected by finitely many edges (see Lemma~\ref{finitecuts}). The degree of $\pi_\mathcal{U}(U_i)$ in $G(\mathcal{U})$ is given by $\cardinality{E(U_i, V \setminus U_i)} < \infty.$ Our next proposition gathers properties of graphs associated with multi-cuts.

\begin{prop}
\label{prop:invlimhyp}
Let $X$ be a graph-like compact space. Then
\begin{enumerate}
\item $X$ is connected if and only if $G(\script{U})$ is connected for all multi-cuts $\script{U}$ of $X$.
\item All cuts of $X$ are even if and only if all vertices in $G(\script{U})$ have even degrees for all multi-cuts $\script{U}$ of $X$.
\end{enumerate}
\end{prop}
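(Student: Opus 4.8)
The plan is to prove both equivalences directly, relying on three facts already available: that $\pi_\mathcal{U} \colon X \to G(\mathcal{U})$ is a continuous surjection (so connectedness passes from $X$ to its image), that each edge is connected (so no edge straddles a separation), and that the degree of $\pi_\mathcal{U}(U_i)$ in $G(\mathcal{U})$ equals $\cardinality{E(U_i, V \setminus U_i)}$, as recorded immediately after the definition of $G(\mathcal{U})$.

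For part (1), the forward implication is immediate: if $X$ is connected then its continuous image $G(\mathcal{U}) = \pi_\mathcal{U}(X)$ is connected for every multi-cut $\mathcal{U}$. For the converse I would argue contrapositively. Suppose $X = X_1 \oplus X_2$ for nonempty clopen $X_1, X_2$. First I would check that each $X_i$ meets $V$: since every edge $e$ has compact closure, we have $\emptyset \neq \closure{e} \setminus e \subseteq V$ (as $e \cong (0,1)$ is not compact), so every edge-closure, and hence every component of $X$, contains a vertex; thus both $X_1$ and $X_2$ do. Setting $U_i = V \cap X_i$ then gives a genuine two-part multi-cut $\mathcal{U} = \Set{U_1, U_2}$, and because edges are connected none crosses between $X_1$ and $X_2$, so $E(U_1, U_2) = \emptyset$ and $G(\mathcal{U})$ is a two-vertex graph with no edges, i.e.\ disconnected.

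For part (2), I would again treat the two directions separately, exploiting that a cut is exactly the edge set between the two sides of a separation together with the displayed degree formula. For the ``if'' direction, given a cut $F = E(A,B)$ with $A, B$ nonempty (the empty case being trivially even), the pair $\Set{A, B}$ is a multi-cut whose vertex $\pi(A)$ has degree $\cardinality{E(A, V \setminus A)} = \cardinality{E(A,B)} = \cardinality{F}$; evenness of all degrees therefore forces every cut to be even. Conversely, given a multi-cut $\mathcal{U} = \Set{U_1, \ldots, U_n}$ and an index $i$, the sets $U_i$ and $V \setminus U_i = \bigcup_{j \neq i} U_j$ are complementary clopen subsets of $V$, so $(U_i, V \setminus U_i)$ is a separation and $E(U_i, V \setminus U_i)$ is a cut; the degree of $\pi_\mathcal{U}(U_i)$ equals its cardinality, which is even by hypothesis.

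The routine steps are the continuous-image argument and the bookkeeping identifying cuts with the edge sets $E(U_i, V \setminus U_i)$. The only point requiring genuine topological input, and the step I would flag as the main obstacle, is the claim in part (1) that a clopen splitting of $X$ induces a splitting of the vertex set into two \emph{nonempty} clopen pieces; this rests on every component of $X$ containing a vertex, which in turn follows from compactness via $\emptyset \neq \closure{e} \setminus e \subseteq V$ for each edge $e$. Once that is in hand, every assertion reduces to the two basic facts about $G(\mathcal{U})$ above.
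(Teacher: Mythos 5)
Your proposal is correct and takes essentially the same approach as the paper: the forward direction of (1) via continuous images of connected spaces, the converse via a two-vertex edgeless quotient arising from any disconnection, and both directions of (2) via the degree formula $\deg \pi_{\script{U}}(U_i) = \cardinality{E(U_i, V \setminus U_i)}$. The paper's proof is simply a terser version of yours; your verification that each clopen piece of a disconnection meets $V$ (via $\emptyset \neq \closure{e} \setminus e \subseteq V$) fills in a detail the paper leaves implicit.
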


\begin{proof}
(1) If $X$ is connected, then connectedness of $G\p{\mathcal{U}}$ follows from the fact that it is the continuous image of $X$. Conversely, a disconnection of $X$ gives rise to a $G\p{\mathcal{U}}$ which is the empty graph on two vertices.

(2) If every cut of $X$ is even, then the above degree considerations show that every vertex in $G\p{\mathcal{U}}$ has even degree. And conversely, any odd cut of $X$ gives rise to a graph $G\p{\mathcal{U}}$ on two vertices of odd degree.
\end{proof}

Recall that a \emph{standard subspace} $Y$ of a graph-like space $X$ is a closed subspace that contains all edges it intersects (i.e.\ whenever $e \cap Y \neq \emptyset$ then $e \subset Y$). Standard subspaces of graph-like spaces are again graph-like. Write $E(Y)$ for the collection of edges of $Y$.

\begin{lemma}
\label{standard}
Let $X$ be a graph-like space and $C \subseteq X$ a copy of a topological circle. Then $C$ is a standard subspace.
\end{lemma}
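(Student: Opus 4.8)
The closedness of $C$ is automatic: $C \cong S^1$ is compact and $X$ is metrizable, hence Hausdorff, so $C$ is closed in $X$. The real content is to show that $C$ contains every edge it meets, so fix an edge $e$ with $e \cap C \neq \emptyset$. Recall from the definition that $e$ (identified with $e \times (0,1)$) is homeomorphic to $(0,1)$ and, since $V$ is closed, is open in $X$; in particular every point of $e$ has an open neighborhood in $X$ homeomorphic to $\R$. My plan is to prove that $e \cap C$ is a nonempty clopen subset of $e$. As $e$ is connected, this forces $e \cap C = e$, i.e.\ $e \subseteq C$, which is exactly the standard-subspace condition.

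That $e \cap C$ is closed in $e$ is immediate, since $C$ is closed in $X$. The substantive step is openness. Fix $x \in e \cap C$ and choose an open neighborhood $U$ of $x$ in $X$ with $U \subseteq e$ together with a homeomorphism $\phi \colon U \to \R$ with $\phi(x) = 0$; this is possible because $e \cong (0,1)$. Since $C$ is a circle, it is a $1$-manifold, so $x$ has an open-arc neighborhood in $C$; intersecting with the $C$-open set $C \cap U$, I may pick such an arc $\alpha$ with $x \in \alpha \subseteq U$ and $\alpha \cong (-1,1)$.

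Now consider $\phi|_\alpha \colon \alpha \to \R$. This is a continuous injection (as $\phi$ is injective) of a space homeomorphic to an open interval, hence strictly monotone, and therefore maps $\alpha$ homeomorphically onto an open interval $(s,t) \ni 0$. Because $\phi$ is injective on all of $U$, one has $\phi^{-1}\big((s,t)\big) = \alpha$, so $\alpha$ is open in $U$, hence open in $e$. Thus $\alpha$ is an $e$-open neighborhood of $x$ contained in $C$, proving $e \cap C$ is open in $e$ and completing the argument.

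The only place where anything could go wrong is the openness step: a priori the circle $C$ might seem able to pass through the interior point $x$ of $e$ transversally, entering and leaving $e$. The monotonicity of a continuous injection of an interval into $\R$ is exactly what rules this out --- near an interior edge point $X$ is locally a line, leaving no room for such a crossing --- and this is the one technical point on which the proof turns. Everything else is routine point-set topology.
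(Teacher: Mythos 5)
Your proof is correct, but it takes a genuinely different route from the paper's. The paper argues by contradiction: assuming $e$ meets $C$ but $e \not\subseteq C$, it picks $y \in e \setminus C$, uses closedness of $C$ to find the point $x_0 \in C$ where the circle ``exits'' along the subarc of $e$ towards $y$, and then observes that a small subarc of $e$ emanating from $x_0$ (meeting $C$ only at $x_0$) together with a small arc of $C$ around $x_0$ forms a triod inside $X \setminus V$ --- contradicting the fact that $X \setminus V \cong E \times (0,1)$, a disjoint union of open intervals, contains no triods. You instead argue directly that $e \cap C$ is clopen in the connected set $e$: closedness comes from compactness of $C$, and openness because the circle, being locally an arc, must map under the chart $\phi \colon U \to \R$ onto an open interval around $\phi(x)$, by monotonicity of continuous injections of intervals into $\R$. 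Both proofs rest on the same local obstruction --- near an interior edge point, $X$ is a copy of $\R$, so the circle cannot cross the edge transversally --- but your clopen-plus-connectedness formulation replaces the triod non-embedding fact and the choice of exit point, giving a self-contained direct argument rather than one by contradiction; the paper's version is the more economical one for readers fluent with triods, a standard tool in continuum theory. One cosmetic remark: the openness of $e$ in $X$ uses not only that $V$ is closed but also that $E$ is discrete (so that $e \times (0,1)$ is open in $E \times (0,1)$); this is exactly what the definition of a graph-like space supplies.
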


\begin{proof}
Assume, by contradiction, that there exists $e\in E(X)$ such that $e\cap C\neq\emptyset$ and that $e\not\subset C$. Let $y\in e\setminus C$. 
Then there exist $x_0\in C$ with the properties that the arc $[x_0,y]$ is a subset of $e$ and $[x_0,y]\cap C=\{x_0\}$. Observe that $x_0\not\in V$. Let $U$
be an open set containing $x_0$ such that $U\cap V=\emptyset$. Let $\alpha$ be the component in $[x_0,y]$ of $x_0$ contained in $U$ and $\beta$ be the component
in $C$ of $x_0$ contained in $U$. Then $\alpha\cup\beta$ contains a triod and $\alpha\cup\beta\subset X\setminus V$ which is a contradiction to the fact 
that $X\setminus V\cong E\times (0,1)$ contains no triods.
\end{proof}

\begin{lemma}
\label{lemma:evencuts}
Let $X$ be a graph-like space and $C \subseteq  X$ a copy of a topological circle. Then $E(C) \cap F$ is finite and even for all cuts $F=E(A,B)$ of $X$.
\end{lemma}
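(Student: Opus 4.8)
The plan is to treat $C$ as a graph-like space in its own right and reduce the ``even'' assertion to the elementary fact that a circle cut by finitely many arcs switches sides an even number of times as one goes around.

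Finiteness is immediate: by Lemma~\ref{finitecuts} every cut $F=E(A,B)$ of $X$ is finite, so $E(C)\cap F$ is finite. For the rest, recall from Lemma~\ref{standard} that $C$ is a standard subspace, hence itself graph-like with vertex set $V(C)=V\cap C$ and edge set $E(C)$; its edges are exactly the components of $C\setminus V(C)$, each an open arc whose closure adds its endpoint vertices. Enumerate $E(C)\cap F=\{e_1,\dots,e_k\}$, the case $k=0$ being trivially even. Since each $e_i$ lies in the cut $E(A,B)$, its two endpoints are distinct vertices, one in $A$ and one in $B$; in particular no $e_i$ is a loop. Deleting the $k$ pairwise disjoint open arcs $e_1,\dots,e_k$ from the circle $C$ leaves a closed set with exactly $k$ connected components $J_1,\dots,J_k$, which are closed subarcs (possibly degenerate to a single vertex, when two cut edges share an endpoint) arranged cyclically and alternating with the deleted edges. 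Each $J_l$ contains at least one vertex, namely an endpoint of an adjacent $e_i$.

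The key step is to show that each $J_l$ lies entirely on one side of the separation. Each $J_l$ is connected and, being a standard subspace, is graph-like with vertex set $V\cap J_l$; write $A'=A\cap V(J_l)$ and $B'=B\cap V(J_l)$, which are clopen in $V(J_l)$ and partition it. Every edge of $J_l$ is an edge of $C$ different from $e_1,\dots,e_k$, hence not in $F$, so it has both endpoints on the same side; therefore the cut of $J_l$ induced by $(A',B')$ is empty. Consequently $J_l=J_l[A']\cup J_l[B']$ is a union of two disjoint closed, hence clopen, subspaces, and connectedness of $J_l$ forces one of $A',B'$ to be empty. I expect this to be the main obstacle: the vertex set of $J_l$ may be a Cantor set, so one must rule out a ``silent'' separation having no crossing edge, which is exactly what the induced-subspace/connectedness argument settles. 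We may thus color each $J_l$ by the single side, $A$ or $B$, on which all its vertices lie.

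Finally I would read off the parity. Each deleted edge $e_i$ is incident with two adjacent components, say $J_l$ and $J_{l+1}$: one endpoint of $e_i$ is a vertex of $J_l$ and the other is a vertex of $J_{l+1}$, and since $e_i$ is a cut edge these endpoints lie on opposite sides. Hence $J_l$ and $J_{l+1}$ receive opposite colors, so the colors alternate around the cyclic sequence $J_1,\dots,J_k$, which is consistent only if $k$ is even. Thus $\cardinality{E(C)\cap F}=k$ is even, completing the argument.
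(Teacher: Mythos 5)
Your proposal is correct and takes essentially the same route as the paper's proof: there the authors form the induced subspaces $C[A]$ and $C[B]$, note that their finitely many components together with the cut edges turn the associated multi-cut quotient $G(\mathcal{U})$ into a cycle, and conclude evenness from its $2$-coloring --- which is exactly your alternation argument expressed through the multi-cut machinery, your monochromatic arcs $J_l$ being precisely the components of $C[A]$ and $C[B]$. If anything, your explicit connectedness argument ruling out a ``silent'' separation inside an arc supplies a detail that the paper compresses into an unproved ``Observe that''.
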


\begin{proof}
By Lemma~\ref{standard} we may assume $X=C$. Let $F=E(A,B)$. That $F$ is finite is immediate from Lemma~\ref{finitecuts}, so we only need to prove that $\cardinality{F}$ is even.

Let $C[A]$ (resp. $C[B]$) be the standard subspace containing $A$ (resp. $B$) and all edges with both endpoints in $A$ (resp. $B$). Observe that
\begin{enumerate}[label=(\alph*)]
\item $C=C[A]\cup F\cup C[B]$, and
\item $C[A]$ and $C[B]$ have finitely many components.
\end{enumerate}
Let $A_1, \dots, A_r$ be the components of $C[A]$ and $B_1, \dots, B_s$ be the components of $C[B]$. These components induce a multi-cut, $\mathcal{U}=\Set{U_{A_1}, \dots, U_{A_r}, U_{B_1}, \dots, U_{B_s}}$, of the vertices of $C$ where $U_{A_i}$ (resp. $U_{B_i}$) consists of all vertices contained in $A_i$ (resp. $B_i$). Then $G\p{\mathcal{U}}$, the multi-graph associated with $\mathcal{U}$, is a cycle whose edges are the elements of $F$ and whose vertices are the equivalence classes containing the sets $U_{A_1},\dots, U_{B_s}$. Observe that the sets $\mathcal{A}=\Set{U_{A_1},\dots, U_{A_r}}$ and $\mathcal{B}=\Set{U_{B_1}, \dots, U_{B_s}}$ give a 2-coloring of the vertices of $G\p{\mathcal{U}}$. Hence $G\p{\mathcal{U}}$ has an even number of edges, i.e.\ $\cardinality{F}$ is even.
\end{proof}

\subsection{Characterizations and Representations}
In this section we prove Theorem~(A). The equivalence of (i) and (iii) is given by Proposition~\ref{pro:grlkcomreg}, the equivalence of (i) and (ii) is Theorem~\ref{chargraphlike}, while the equivalence of (i), (iv) and (iv)' follows from Theorems~\ref{inverselimit} and~\ref{gralikeasinverse}. Compact graph-like spaces were explicitly defined to include standard subspaces of the Freudenthal compactification of locally finite graphs. Theorem~\ref{standardsubspacetheorem} provides the converse, establishing equivalence of (i) and (v).

Recall that a continuum $X$ is \emph{regular} if it has a basis of open sets, each with finite boundary, and it is called \emph{completely regular} if each non-degenerate subcontinuum of $X$ has non-empty interior in $X$, see ~\cite[Page 1176]{Koenigsberg}. A continuum is \emph{hereditarily locally connected (hlc)} if every subcontinuum is locally connected, and \emph{finitely Souslian} if each sequence of pairwise disjoint subcontinua forms a null-sequence, i.e. the diameters of the subcontinua converge to zero. It is known that for continua
\begin{enumerate}
\item[($\ddagger$)] completely regular $\Rightarrow$ regular $\Rightarrow$ finitely  Souslian $\Rightarrow$ hlc $\Rightarrow$ arc-connected.
\end{enumerate}
For the first three implications, see \cite[Proposition 1.1]{kra}.

\begin{lemma}
Every compact graph-like space is regular.
\label{lemma:gl-regular}
\end{lemma}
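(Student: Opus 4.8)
The plan is to exhibit, for each point $x \in X$ and each open neighborhood $U$ of $x$, a base element contained in $U$ with finite boundary; the collection of all such sets is then a base witnessing regularity. I would split into two cases according to whether $x$ lies on an (open) edge or is a vertex. The edge case is immediate: if $x \in e$ for some edge $e \cong (0,1)$, then small open subarcs of $e$ around $x$ form a neighborhood base, and each such subarc has boundary consisting of exactly two points. Note that connectedness of $X$ plays no role, so the argument covers all compact graph-like spaces, not merely continua.

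The substance is the vertex case. Given a vertex $v$ and an open $U \ni v$, apply Lemma~\ref{basisregion} to obtain a clopen $C \subseteq V(X)$ with $v \in C$ and $X[C] \subseteq U$. By Lemma~\ref{finitecuts} the cut $E(C, V \setminus C) = \{e_1, \dots, e_k\}$ is finite. For each $i$, parametrize $e_i \cong (0,1)$ so that the endpoint added by taking the closure on the $C$-side is the limit as the parameter tends to $0$; since that endpoint lies in $X[C] \subseteq U$ and $U$ is open, I can choose $t_i \in (0,1)$ so that the open initial segment $S_i$ of $e_i$ (parameter values in $(0,t_i)$) satisfies $S_i \subseteq U$. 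I then define the candidate neighborhood
\[
N = X[C] \cup \bigcup_{i=1}^{k} S_i,
\]
which is contained in $U$ and contains $v$.

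The key step is to check that $N$ is open with finite boundary. I would verify openness by showing its complement is closed: using that every point of $X$ lies either in $X[C]$, in $X[V \setminus C]$, or in the interior of one of the finitely many cut edges, one sees $X \setminus N = X[V \setminus C] \cup \bigcup_i (e_i \setminus S_i)$, and each piece is closed (the induced subspace $X[V \setminus C]$ is closed, and the closure of $e_i \setminus S_i$ only adds the far endpoint, which lies in $X[V \setminus C]$). For the boundary, since $X[C]$ is closed its limit points already lie in $N$, so the only limit points of $N$ outside $N$ are the parameter-$t_i$ endpoints $p_i$ of the segments $S_i$; hence $\partial N = \{p_1, \dots, p_k\}$ is finite, of cardinality $\lvert E(C, V \setminus C)\rvert$. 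I expect the main obstacle to be precisely this openness-and-boundary bookkeeping: the whole construction succeeds only because Lemma~\ref{finitecuts} guarantees that finitely many edges leave $C$, so that attaching their short initial segments turns the closed set $X[C]$ into an open set while introducing only finitely many new boundary points.
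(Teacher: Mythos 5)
Your proposal is correct and follows essentially the same route as the paper's own proof: the edge case is dismissed immediately, and in the vertex case one takes the set $X[C]$ from Lemma~\ref{basisregion}, attaches short initial segments of the finitely many cut edges (finite by Lemma~\ref{finitecuts}), and observes that the resulting open set has boundary exactly the far endpoints of those segments. Your extra bookkeeping verifying openness via the closed complement is a more explicit version of what the paper leaves implicit, but it is the same construction.
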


\begin{proof}
Let $X$ be a compact graph-like space, $p\in X$, and $U$ be an open of $X$ set such that $p\in U$. We will show that there is an open set $O$ with finite boundary such that $p\in O\subseteq U$. 

The case when $p$ is in the interior of an edge follows from the fact that the set of edges is discrete. So we may assume $p\in V$. For this case let $X[B]$ as in the proof of Lemma~\ref{basisregion}, then $p\in X[B]\subseteq U$. Now for each $e\in E(B, V\setminus B)$, let $(v,x_e)$ be a subarc of $e$ such that $(v,x_e)\subseteq U$ and such that $v\in B$. Since cuts are finite, then there are only finitely many of these arcs. The desire open set $O$ is then $X[B]\cup\{(v,x_e) : e\in E(B,V\setminus B)\}$ as its boundary is the set $\{x_e: e\in E(B,V\setminus B)\}$.
\end{proof}

\begin{cor}
\label{souslian}
Every graph-like continuum is finitely Souslian, hereditarily locally connected and arc-connected.
\label{cor:glislc}
\end{cor}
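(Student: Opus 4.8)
The plan is to derive the corollary directly from Lemma~\ref{lemma:gl-regular} together with the chain of implications~($\ddagger$), so there is essentially nothing new to prove. First I would unwind the definitions: a \emph{graph-like continuum} is, by definition, a continuum which is graph-like, i.e.\ a \emph{connected} compact graph-like space. In particular it is a continuum in the sense required by~($\ddagger$)---a connected, compact, metrizable space---since compactness and metrizability are built into the definition of a compact graph-like space, and connectedness is exactly what distinguishes a continuum. By Lemma~\ref{lemma:gl-regular} every compact graph-like space is regular, so a graph-like continuum is a \emph{regular} continuum.

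The remaining step is simply to feed this into~($\ddagger$): for continua one has regular $\Rightarrow$ finitely Souslian $\Rightarrow$ hereditarily locally connected $\Rightarrow$ arc-connected, with the relevant implications supplied by \cite[Proposition~1.1]{kra}. Applying the chain to our regular continuum delivers all three asserted conclusions at once. I do not expect any genuine obstacle here, as the substance of the argument has already been carried out in establishing Lemma~\ref{lemma:gl-regular} and in recording the cited implications; the corollary is a direct consequence of the two. The only point that warrants a moment's attention is checking that a graph-like continuum really does satisfy the hypotheses under which~($\ddagger$) is stated---namely that it is a continuum---but this is immediate from the definitions, as noted above.
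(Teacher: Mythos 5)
Your proposal is correct and matches the paper's own proof, which likewise invokes Lemma~\ref{lemma:gl-regular} together with the chain~($\ddagger$) to obtain all three properties at once. (The paper additionally sketches optional direct arguments for the finitely Souslian and hlc parts, but these are supplementary to the same core deduction you give.)
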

\begin{proof}
By Lemma~\ref{lemma:gl-regular} and ($\ddagger$), this is a consequence of regular.

For a direct proof that graph-like continua are finitely Souslian, suppose for a contradiction that $\set{A_i}:{i \in \N}$ forms a sequence of disjoint subcontinua of $X$ with non-vanishing diameter. It follows from the sequential compactness of the hyperspace of subcontinua, \cite[Corollary 4.18]{Nadler}, that there is a subsequence $A_{i_j}$ such that
$A=\lim_{j \to \infty} A_{i_j} = \closure{\bigcup_{j} A_{i_j}} \setminus \bigcup_{j} A_{i_j}$ 
is a non-trivial subcontinuum of $X$.
But since edges are open, we also have that $A \subset V(X)$, so is totally disconnected, a contradiction.

For a direct proof that graph-like continua are hlc, see Lemma~\ref{basisregion}.
\end{proof}

In particular, noting that a compact graph-like space has at most countably many edges (as they form a collection of disjoint open subsets), it follows that the edges of $X$ form a null-sequence, i.e.\ $\lim_{n\to \infty} \diam{e_n}=0$. Here, for a subset $A$ of a metric space, we denote by $\diam{A}$ the diameter of $A$. 

In the next theorem we use the following notation. For a subspace $A \subset X$ we denote by $\bd{A}$ its boundary. A subarc $A \subset X$ is called \emph{free} if $A$ removed its endpoints is open in $X$.

\begin{theorem}[{\cite[Theorem 1.3]{kra}}]
A continuum $X$ is completely regular if and only if there exists a $0$-dimensional compact subset $F$ of $X$ and a finite or countable null sequence of free arcs $A_1, A_2, \dots$ in $X$ such that $$X=F\cup\left(\bigcup\{A_n : n\geq 1\}\right) \text{ and } A_j\cap F = \bd{A_j}$$
for each $j\geq 1$
\label{thm:compreg}
\end{theorem}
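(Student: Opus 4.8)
The plan is to prove the two implications separately, treating the backward direction (decomposition $\Rightarrow$ completely regular) as the easy one and concentrating on the forward direction.

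For the backward direction, suppose $X = F \cup \bigcup_n A_n$ as in the statement, and let $K$ be a non-degenerate subcontinuum of $X$. Since $F$ is $0$-dimensional it contains no non-degenerate connected set, so $K \not\subseteq F$; hence $K$ meets the relative interior $A_n \setminus \bd{A_n}$ of some free arc, which is open in $X$. A short case analysis — whether or not $K$ stays inside $A_n$ — shows that $K$ must contain a non-degenerate subarc of this open arc: to leave $A_n$ the connected set $K$ would have to pass through an endpoint lying in $F$, so in either case it captures an open subarc. Such a subarc is open in $X$, whence $K$ has non-empty interior and $X$ is completely regular.

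For the forward direction I would first establish that the \emph{free arc points} — points lying in the relative interior of some free arc — are dense. Given a non-empty open $U$, local arc-connectedness (a graph-like continuum is a Peano continuum by Corollary~\ref{souslian}, hence locally arc-connected) yields an arc $A \subseteq U$. As a non-degenerate subcontinuum, $A$ has non-empty interior in $X$ by complete regularity; this interior is an open subset of the arc $A$, so it contains an open subarc $(c,d)$ that is open in $X$, that is, a free arc inside $U$. Let $P$ be the resulting open, dense set of free arc points and set $F = X \setminus P$, a compact nowhere dense subset. The key structural step is that $F$ is $0$-dimensional: if $F$ had a non-degenerate component $C$, then $C$ would be a non-degenerate subcontinuum of $X$, so by complete regularity $C$ would have non-empty interior, contradicting that $F$ is nowhere dense; thus $F$ is totally disconnected, and being compact metric it is $0$-dimensional.

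It then remains to organise $P$ into the required null sequence of free arcs. Each point of $P$ has an open-arc neighbourhood, so $P$ is a $1$-manifold and each of its (countably many, pairwise disjoint) components $O_n$ is homeomorphic to $\R$ or to $S^1$; an $S^1$ component is compact, hence clopen, forcing $X = S^1$, a case handled directly by splitting the circle at two chosen points. Otherwise each $O_n \cong \R$, and I would show that each of its two ends converges to a single point of $F$: the cluster set of an end is a nested intersection of subcontinua, hence itself a subcontinuum, which lies in the $0$-dimensional set $F$ and so is a single point. Consequently $A_n := \closure{O_n}$ is a free arc with $\interior{A_n} = O_n$ and $A_n \cap F = A_n \setminus O_n = \bd{A_n}$, and $X = F \cup \bigcup_n A_n$. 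Finally, that $(A_n)$ is a null sequence follows exactly as in Corollary~\ref{souslian}: infinitely many arcs of diameter bounded away from $0$ would furnish disjoint subcontinua of non-vanishing diameter, contradicting that $X$ is finitely Souslian. The main obstacle, and the step demanding the most care, is pinning down the component structure of $P$ — in particular the convergence of the ends of each $\R$-component to single points of $F$, together with the isolated treatment of the circle case — whereas the $0$-dimensionality of $F$, which one might expect to be delicate, falls out cheaply once free arc points are known to be dense.
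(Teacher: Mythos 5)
First, a point of order: the paper never proves this statement --- it is quoted directly from Krasinkiewicz \cite[Theorem 1.3]{kra} --- so your argument can only be measured against the statement itself, not against an internal proof. Your backward direction is sound, and so is the first half of your forward direction (density of free-arc points, and the observation that $F = X \setminus P$ is compact, nowhere dense, hence totally disconnected and zero-dimensional). The genuine gap is in the last step, precisely the one you flagged as needing care: you assume that the two ends of a component $O_n \cong \R$ of $P$ converge to \emph{distinct} points of $F$, so that $A_n = \closure{O_n}$ is an arc. Nothing forces this: both ends may converge to the \emph{same} point $p \in F$, in which case $\closure{O_n} = O_n \cup \Set{p}$ is a topological circle, not an arc, and your decomposition is not of the required form. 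This is not a pathology: the Hawaiian earring $H$, Example 5 of this very paper, is completely regular, its free-arc points are exactly $H \setminus \Set{\mathbf{0}}$, so your $F$ is the singleton $\Set{\mathbf{0}}$, and \emph{every} component $O_n = C_n \setminus \Set{\mathbf{0}}$ has closure the full circle $C_n$. Your case analysis catches only components that are themselves homeomorphic to $S^1$ (compact, hence clopen, forcing $X = S^1$); a line component whose closure is a circle is a different, and generic, phenomenon. The repair is to enlarge $F$: for each such loop component pick a point $q_n \in O_n$, add it to $F$, and split $\closure{O_n}$ into two arcs from $p$ to $q_n$. One must then re-verify that the enlarged $F$ is still closed (the closures $\closure{O_n}$ form a null sequence and each meets the old $F$, so the added points can accumulate only on $F$) and still zero-dimensional (it remains nowhere dense, so by complete regularity it contains no non-degenerate subcontinuum, hence is totally disconnected).

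Two further defects are worth recording. First, your citations of Corollary~\ref{souslian} are circular in the context of this paper: that corollary concerns \emph{graph-like} continua, while at this stage $X$ is only known to be completely regular, and the equivalence of the two notions (Proposition~\ref{pro:grlkcomreg}) is deduced \emph{from} Theorem~\ref{thm:compreg}. What you may legitimately use is the chain $(\ddagger)$ quoted from \cite[Proposition 1.1]{kra}: completely regular $\Rightarrow$ regular $\Rightarrow$ finitely Souslian $\Rightarrow$ hereditarily locally connected; local connectedness gives local arc-connectedness of open connected sets (Peano theory), and finite Souslian-ness gives your null-sequence claim --- after shrinking the arcs slightly so that they become pairwise disjoint, since your $A_n$ may share endpoints. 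Second, with the paper's convention that $\bd{A_n}$ is the topological boundary of $A_n$ in $X$, your identity $\interior{A_n} = O_n$, hence $A_n \cap F = \bd{A_n}$, can fail even when $\closure{O_n}$ is an arc: for $X = [0,1]$ your construction yields $F = \Set{0,1}$ and $A_1 = [0,1]$, whose interior in $X$ is all of $X$ and whose boundary is empty, while $A_1 \cap F = \Set{0,1}$. So either $\bd{A_n}$ must be read as the pair of endpoints of the arc, or ends of $O_n$ converging to points of order $1$ (which land in $\interior{\closure{O_n}}$) need separate treatment.
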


Observe that Theorem~\ref{thm:compreg} implies that every completely regular continuum is a graph-like space. Conversely, if $X$ is a graph-like continuum, then the set of vertices $V$ is zero-dimensional. Also by Corollary~\ref{cor:glislc}, $E(X)$ forms a null sequence. By Theorem~\ref{thm:compreg}, $X$ is a completely regular continuum. 
  
\begin{prop}
Let $X$ be a continuum. Then $X$ is completely regular if and only if $X$ is a graph-like space.
\label{pro:grlkcomreg}
\end{prop}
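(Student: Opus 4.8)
The plan is to read off each direction as a dictionary between the graph-like data $(V,E)$ and the Krasinkiewicz decomposition of Theorem~\ref{thm:compreg}, namely a $0$-dimensional compact set $F$ together with a finite or countable null sequence of free arcs $A_1, A_2, \dots$ with $A_j \cap F = \bd{A_j}$. Indeed, the observation immediately preceding the proposition already indicates both implications; my job is to make the translation precise and to deal with the one subtlety it glosses over.

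For the forward direction, suppose $X$ is completely regular and fix such a decomposition $X = F \cup \bigcup_n A_n$. I would set $V := F$ and take as edges the open arcs $e_n := A_n \setminus \bd{A_n}$. Then $V$ is closed (being compact in $X$) and zero-dimensional by hypothesis, while the freeness condition guarantees that each $e_n$ is open in $X$ and homeomorphic to $(0,1)$. Since the arcs have pairwise disjoint interiors, all disjoint from $F$, and exhaust $X \setminus F$, the complement $X \setminus V$ is the disjoint union of the open intervals $e_n$; as this is a countable null family of sets open in $X$, its index set $E$ carries the discrete topology and $X \setminus V \cong E \times (0,1)$. Hence $(X,V,E)$ is compact graph-like.

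For the converse, suppose $X$ is a graph-like continuum. The natural candidate is $F := V$, which is compact (closed in the compact $X$) and zero-dimensional by definition, and by Corollary~\ref{cor:glislc} together with the ensuing remark the edges form a null sequence. When an edge closure $\closure{e_n}$ is homeomorphic to $I$ it is exactly a free arc meeting $V$ in its two endpoints, and Theorem~\ref{thm:compreg} would then deliver complete regularity directly.

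The one genuine obstacle is the presence of loop edges, i.e.\ edges $e_n$ with $\closure{e_n} \cong S^1$, which are circles rather than arcs and so do not literally match the hypothesis of Theorem~\ref{thm:compreg}. I would resolve this by subdivision: for each such loop choose an interior point $w_n \in e_n$ and enlarge the distinguished set to $F' := V \cup \set{w_n}:{\closure{e_n} \cong S^1}$. Each loop then splits at its base vertex and at $w_n$ into two free arcs, each meeting $F'$ precisely in its endpoints, while the non-loop edges are untouched; the resulting collection of arcs is still a null sequence. It remains to check that $F'$ is compact and zero-dimensional, which is where the only care is needed. Compactness holds because the edges form a null sequence, so the added points accumulate only on $V$. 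For zero-dimensionality I would observe that each $w_n$ lies in the open edge $e_n$, which meets $F'$ only in $w_n$ (as $e_n \cap V = \emptyset$ and each edge carries at most one chosen point); thus every $w_n$ is isolated in $F'$, and adjoining isolated points to the totally disconnected set $V$ keeps $F'$ totally disconnected, hence zero-dimensional. With this, $X = F' \cup \bigcup(\text{free arcs})$ satisfies the hypotheses of Theorem~\ref{thm:compreg}, and complete regularity follows.
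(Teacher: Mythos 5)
Your proposal is correct and takes essentially the same route as the paper, whose proof of Proposition~\ref{pro:grlkcomreg} is just the short observation preceding it: read off the graph-like structure $(V,E)=(F,\{\text{open arcs}\})$ from Theorem~\ref{thm:compreg} in one direction, and in the other feed $V$ (zero-dimensional) and the edge set (a null sequence, by Corollary~\ref{cor:glislc} and the ensuing remark) back into Theorem~\ref{thm:compreg}. Your explicit subdivision of loop edges---those whose closure is a circle rather than an arc---together with the check that the enlarged set $F'$ remains compact and zero-dimensional, fills in precisely the detail that the paper's two-line argument elides.
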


Recall that a graph can be characterized in terms of order: a continuum is a graph if and only if every point has finite order, and all but finitely many points have order $2$, \cite[Theorem 9.10 \& 9.13]{Nadler}.

\begin{theorem}[Graph-like Characterization]
\label{chargraphlike}
A continuum is graph-like if and only if it is regular and has a closed zero-dimensional subset $V$ such that all points outside of $V$ have order $2$.
\end{theorem}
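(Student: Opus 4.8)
The forward implication is the easy one, and I would dispatch it first. If $X$ is graph-like with vertices $V$ and edges $E$, then $X$ is regular by Lemma~\ref{lemma:gl-regular}, and $V$ is closed and zero-dimensional by definition. Any point $p \notin V$ lies in the interior of a single edge; since $X \setminus V \cong E \times (0,1)$, the point $p$ has a neighborhood basis of open subarcs of that edge, each with exactly the two interior endpoints as boundary, so $p$ has order $2$. This gives (i)$\Rightarrow$(ii).

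The substance is the converse. Assume $X$ is a regular continuum with a closed zero-dimensional $V$ all of whose complementary points have order $2$. The plan is to upgrade the order-$2$ condition to genuine local arc structure and then assemble the edges. First, by the chain~($\ddagger$), $X$ is hereditarily locally connected; in particular $X$ is locally connected and \emph{every} subcontinuum of $X$ is locally connected. I then claim that every $p \in X \setminus V$ has an open neighborhood homeomorphic to $(0,1)$. Since $V$ is closed, I would fix an open $W \ni p$ with $\overline W \cap V = \emptyset$; using local connectedness and the order-$2$ hypothesis, I choose a connected open $U$ with $p \in U \subseteq \overline U \subseteq W$ and $\lvert \bd{U} \rvert \le 2$ (pass to the component of $p$ inside a small neighborhood with at most two boundary points, noting that the boundary of a component is contained in the boundary of the set). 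Then $\overline U$ is a subcontinuum of $X$, hence a locally connected continuum, with $\overline U \setminus U = \bd{U}$ consisting of at most two points, all lying in $X \setminus V$. Since point order cannot increase on passing to a subcontinuum (relative neighborhood boundaries are contained in the ambient ones), every point of $\overline U$ has order at most $2$ in $\overline U$. By the classical characterization of arcs and circles as exactly the nondegenerate continua all of whose points have order $\le 2$ (see, e.g.,~\cite{Nadler}), $\overline U$ is an arc or a circle, and deleting the at most two points of $\bd{U}$ leaves $p$ with a neighborhood homeomorphic to $(0,1)$. Hence $X \setminus V$ is a $1$-manifold without boundary.

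With the local structure in hand, the assembly is routine. Because $X$ is locally connected and $X \setminus V$ is open, the components of $X \setminus V$ are open in $X$ and each is a connected $1$-manifold, so homeomorphic to $(0,1)$ or to $S^1$. A component $C \cong S^1$ is compact, hence clopen in $X$, and connectedness of $X$ forces $X = C = S^1$, which is graph-like (take any single point as vertex and one edge). Otherwise every component is an open arc. As $X$ is compact metric, hence separable, there are at most countably many such disjoint open components; I index them by a countable discrete set $E$. The subspace topology on $X \setminus V$ coincides with the disjoint-union topology of its open components, so $X \setminus V \cong \bigsqcup_{e \in E}(0,1) \cong E \times (0,1)$. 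Thus $(X,V,E)$ is a compact graph-like space and $X$ is graph-like, giving (ii)$\Rightarrow$(i).

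The hard part is clearly the claim that order $2$ forces a local arc, and everything there rests on two classical facts applied to the auxiliary subcontinuum $\overline U$: that hereditary local connectedness makes $\overline U$ locally connected, and that a locally connected continuum with all points of order $\le 2$ is an arc or a circle. The delicate bookkeeping is ensuring $U$ is connected with $\overline U$ disjoint from $V$ while keeping $\lvert\bd{U}\rvert \le 2$, and verifying order monotonicity on subcontinua so that the characterization applies. If I wanted to shorten the final assembly, an alternative is to observe that the same local arc structure shows every nondegenerate subcontinuum of $X$ must meet $X \setminus V$ (a nondegenerate continuum cannot lie inside the totally disconnected set $V$) and therefore has nonempty interior; this makes $X$ completely regular, whence $X$ is graph-like by Proposition~\ref{pro:grlkcomreg}.
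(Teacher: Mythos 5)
Your proof is correct, and it follows the same overall skeleton as the paper's: the forward direction via Lemma~\ref{lemma:gl-regular} and the definition, and the converse by showing that the components of $X\setminus V$ are open arcs. The difference is in how the key classification step is justified. The paper is terse: regularity gives local connectedness, hence the components of $X\setminus V$ are open, countably many, non-trivial, non-compact (by connectedness of $X$), and consist of order-$2$ points, and it concludes directly that each component is an open interval---implicitly invoking the order-theoretic classification of such \emph{non-compact} spaces. You instead work locally with compacta: around each $p\notin V$ you build a connected open $U$ with $\cardinality{\bd{U}}\le 2$ whose closure misses $V$, use monotonicity of order under passage to subspaces, and apply the classical arc/circle characterization from \cite{Nadler} to the subcontinuum $\closure{U}$; then $X\setminus V$ is a $1$-manifold and you classify its components. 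What your route buys: you only ever cite the compact (continuum) form of the order classification, which is the cleanly quotable statement, and you explicitly handle the case of a compact component, i.e.\ $X\cong S^1$ (equivalently $V$ possibly empty), which the paper's ``non-compact'' assertion quietly elides. The cost is length, plus two details you flag but gloss, both easily supplied: when $\closure{U}$ is an arc its endpoints must lie in $\bd{U}$ (this uses that points outside $V$ have order exactly $2$, not merely $\le 2$), and when $\closure{U}$ is a circle, connectedness of $U$ forces $\cardinality{\bd{U}}\le 1$. Your alternative finish via complete regularity and Proposition~\ref{pro:grlkcomreg} is also sound, though the claim that a subcontinuum meeting $X \setminus V$ has non-empty interior needs a small boundary-bumping argument when the subcontinuum is not contained in a single edge.
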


\begin{proof} Sufficiency follows from the definition of graph-like and Lemma \ref{lemma:gl-regular}.

For the necessity, first observe that regular implies local connectedness. Let $V \subset X$ be a closed zero-dimensional collection of points in $X$ such that all points outside of $V$ have order $2$. By local connectedness, all components of $X \setminus V$ are open subsets of $X$. In particular, we have at most countably many components, and each component is non-trivial, non-compact, and consists exclusively of points of order $2$. So each component is homeomorphic to an open interval.
So all that remains to show for graph-like is that the closure of each edge is compact, which is automatic.
\end{proof}
 
\begin{cor}[Canonical Representation of Graph-like Spaces]
\label{repstd}
Let $X \not\cong S^1$ be a graph-like continuum. Then there is a unique minimal set $V \subset X$ which witnesses that $X$ is a graph-like space. We call $(X,V,E)$ the standard representation of $X$.
\end{cor}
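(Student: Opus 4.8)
The plan is to prove that for a graph-like continuum $X \not\cong S^1$, there is a unique minimal witnessing vertex set $V$. The key is to characterize the minimal $V$ intrinsically via the order of points. Recall (from Theorem~\ref{chargraphlike} and the definition) that for any witnessing set $V$, every point outside $V$ has order $2$, so points forced to lie in every witness are exactly those of order $\neq 2$. Conversely, a point of order $2$ lying in the interior of an edge can potentially be removed from $V$ by merging the two incident edge-segments into one edge.

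\textbf{Defining the candidate.} First I would set $V_0 = \set{x \in X}{x \text{ has order} \neq 2}$, the set of \emph{branch points and endpoints}. Since $X$ is graph-like, fix any witnessing $(X,V,E)$. Every point of $V_0$ must lie in $V$, because points outside $V$ have order $2$ (Theorem~\ref{chargraphlike}). So $V_0 \subseteq V$ for \emph{every} witness $V$, which immediately gives minimality \emph{provided} $V_0$ is itself a witness. The substance of the proof is therefore to show $(X, V_0, E_0)$ is graph-like for an appropriate edge set $E_0$.

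\textbf{Verifying $V_0$ is a witness.} To apply Theorem~\ref{chargraphlike} I must check: (a) $V_0$ is closed, (b) $V_0$ is zero-dimensional, and (c) every point outside $V_0$ has order $2$ — but (c) holds by definition of $V_0$. For (b), note $V_0 \subseteq V$ and $V$ is zero-dimensional, and subspaces of zero-dimensional metrizable spaces are zero-dimensional, so $V_0$ is zero-dimensional. The main work is (a), closedness of $V_0$. Equivalently, I must show the set of order-$2$ points is open. Here I would use regularity/local structure: if $x$ has order $2$, then using the standard representation $(X,V,E)$, either $x$ lies in the interior of some edge $e$ (an open set all of whose points have order $2$), or $x \in V$ has exactly two edges emanating from it, in which case a small region around $x$ (via Lemma~\ref{basisregion}) together with the incident edge-stubs forms a neighborhood consisting of order-$2$ points. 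A subtle case is a vertex of order $2$ incident to a single edge that returns to it (a loop based at $x$); here I must confirm the order count and that such a point genuinely has a neighborhood of order-$2$ points, which follows since locally it looks like an open interval. The exclusion $X \not\cong S^1$ is needed precisely because $S^1$ has \emph{every} point of order $2$, so $V_0 = \emptyset$, which is not a valid (nonempty) witness.

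\textbf{The main obstacle and uniqueness.} The hardest step will be establishing that the order-$2$ points form an open set, i.e.\ closedness of $V_0$, since order is a delicate global invariant and a priori a limit of order-$2$ points could be a branch point; ruling this out requires the regularity and null-sequence properties (Corollary~\ref{cor:glislc}, the fact that $\diam{e_n} \to 0$). Once $V_0$ is shown to be a closed zero-dimensional witness, uniqueness of the minimal set is automatic: any witness $V$ satisfies $V_0 \subseteq V$, so $V_0$ is the unique minimal one. Finally I would take $E_0$ to be the components of $X \setminus V_0$ (each an open arc by the argument in Theorem~\ref{chargraphlike}), completing the standard representation $(X, V_0, E_0)$.
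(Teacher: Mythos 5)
Your proposal stands or falls with the claim that $V_0=\{x\in X:\ x \text{ has order}\neq 2\}$ is itself a witnessing vertex set --- equivalently, that the set of order-$2$ points is open and that ``order $\neq 2$'' characterizes the points forced into every witness. Both halves of this claim are false, and the paper's own Example~3 (the Cantor bouquet of circles, $\mathrm{CBC}$, with vertex set the Cantor set $C$ and a pair of semicircular edges over each removed interval) is a counterexample. Let $c\in C$ be a point that is not an endpoint of any removed interval. Then $c$ has order $2$: choosing removed intervals $(c_L,d_L)$ and $(d_R,c_R)$ close to $c$ on either side, the open set $X[C\cap[d_L,d_R]]$ together with the four open semicircles over those two intervals has boundary exactly $\{c_L,c_R\}$, and such sets form a neighborhood base at $c$. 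Nevertheless $c$ must lie in \emph{every} witness: if $c\notin V_s$ for some witness $V_s$, then $c$ would lie in an open edge of that representation, i.e.\ $c$ would have an open neighborhood homeomorphic to $(0,1)$; but removed intervals accumulate at $c$, so every neighborhood of $c$ in $\mathrm{CBC}$ contains entire circles, and $(0,1)$ has no subspace homeomorphic to $S^1$. So order $2$ is necessary but \emph{not} sufficient for a point to be omitted from some witness. Consequently, in $\mathrm{CBC}$ the set $V_0$ consists only of the endpoints of removed intervals (these have order $\geq 3$): it is a dense proper subset of $C$, hence not closed, and the components of $X\setminus V_0$ include singletons, so no choice of edge set $E_0$ makes $(X,V_0,E_0)$ graph-like. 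The step you flagged as the ``main obstacle'' (openness of the set of order-$2$ points) is therefore not a technical difficulty to be overcome by regularity and null-sequence arguments; it is simply false, and with it both your existence and your minimality arguments collapse.

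The paper's proof avoids exactly this trap. It sets $V=\bigcap_{s\in S}V_s$, the intersection of \emph{all} witnesses, so that closedness and zero-dimensionality come for free, and --- this is the crucial difference --- a point $x\notin V$ is known to lie outside \emph{some} witness, hence on an open edge of some representation. That is strictly stronger information than $x$ having order $2$: it gives an interval neighborhood. From this the paper concludes that each component of $X\setminus V$ is open (by local connectedness, Lemma~\ref{lemma:gl-regular}), non-compact, and consists of order-$2$ points, hence is an open arc; so $V$ is a witness, and minimality and uniqueness are then immediate since $V$ is contained in every witness. Note that in $\mathrm{CBC}$ this minimal $V$ equals all of $C$ and so contains order-$2$ points; the correct intrinsic dividing line is not the order of a point but whether it lies on a free open arc, which is what your identification of ``removable'' with ``order $2$'' conflates.
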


\begin{proof}
Let $\set{V_s}:{s \in S}$ be the collection of all subsets of $X$ which witness that $X$ is graph-like. We claim that $V = \bigcap_{s \in S} V_s$ is also a vertex set. 

Clearly, $V$ is closed and zero-dimensional. Further, if $x \notin V$, then $x \notin V_s$ for some $s \in S$, so $x$ has order 2. So either $V$ is empty, in which case $X \cong S^1$; or $V$ is non-empty, in which case every component of $X \setminus V$ is non-compact, open, and consists of points of order $2$, so is homeomorphic to an open interval.
\end{proof}

%
%

Our next theorem has been proved, for completely regular continua, by Nikiel, \cite[3.8]{inverselimitsnikiel}. We reprove his theorem here (and extend it to graph-like compacta), phrased for convenience in the language of graph-like continua. 

\begin{theorem}[Inverse Limit Representation]
\label{inverselimit}
Every graph-like compact space $X$ can be represented as an inverse limit of multi-graphs $G_n$ ($n \in \mathbb{N}$) with onto simplicial bonding maps that have non-trivial fibres at vertices only, such that
\begin{enumerate}
\item $X$ connected if and only if $G_n$ is connected for all $n$,\label{connectedness} and
	\item all cuts $E\p{A,B}$ in $X$ are even $\Leftrightarrow$ all vertices in $G_n$ are even  for all $n$. \label{item:evencuts}
\end{enumerate}
Moreover, if $X$ is connected, then the bonding maps can additionally be chosen monotone.
\end{theorem}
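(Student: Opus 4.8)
My plan is to realise $X$ as the inverse limit of the finite multi-graphs $G(\mathcal{U})$ associated with a cofinal tower of multi-cuts. Fix a compatible metric $d$ on $X$, and recall that the edges form a null sequence. One preliminary subtlety must be dealt with first: the closure of an edge may be a circle meeting $V$ in a single vertex (a ``loop''), and such an edge is internal to every multi-cut, hence invisible in every $G(\mathcal{U})$. I would therefore begin by enlarging the vertex set, subdividing each loop edge once by declaring its midpoint a vertex. Since the edges form a null sequence these new points accumulate only on the old $V$, so the enlarged vertex set is again closed and zero-dimensional, $X$ is unaltered as a space, and every edge now has two distinct end-vertices. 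Henceforth $V$ denotes this vertex set.

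Next I would build multi-cuts $\mathcal{U}_1, \mathcal{U}_2, \dots$, each refining its predecessor, with vanishing mesh $\max_{U \in \mathcal{U}_n}\diam{X[U]} \to 0$. The engine is Lemma~\ref{basisregion}: given $\epsilon > 0$, I apply it at each vertex with a ball of radius $\epsilon/2$ to obtain a clopen $C \ni v$ with $\diam{X[C]} < \epsilon$, cover the compact set $V$ by finitely many such $C_1, \dots, C_m$, and disjointify to a clopen partition $W_1 = C_1$, $W_{i+1} = C_{i+1} \setminus (C_1 \cup \dots \cup C_i)$; as $W_i \subseteq C_i$ gives $X[W_i] \subseteq X[C_i]$, the resulting multi-cut has mesh $< \epsilon$. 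Taking $\epsilon = 1/n$ and intersecting cell-by-cell with $\mathcal{U}_{n-1}$ produces $\mathcal{U}_n$, a refinement of $\mathcal{U}_{n-1}$ of mesh $< 1/n$. When $X$ is connected I would further split each cell into its connected components, which by Lemma~\ref{basisregion} (using finiteness of cuts, Lemma~\ref{finitecuts}) are finite in number and are exactly regions; this keeps the mesh small while making every cell connected.

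Since $\mathcal{U}_{n+1}$ refines $\mathcal{U}_n$, the map $\pi_{\mathcal{U}_n}$ is constant on every fibre of $\pi_{\mathcal{U}_{n+1}}$, so the universal property of the quotient yields a unique continuous $p_n \colon G(\mathcal{U}_{n+1}) \to G(\mathcal{U}_n)$ with $\pi_{\mathcal{U}_n} = p_n \circ \pi_{\mathcal{U}_{n+1}}$. I would then check that $p_n$ sends vertices to vertices, carries a cross edge that is still a cross edge of $\mathcal{U}_n$ homeomorphically onto an edge, and collapses to a vertex any cross edge that has become internal; hence $p_n$ is simplicial and onto, and its only non-trivial fibres lie over vertices, the fibre over $\pi_{\mathcal{U}_n}(U)$ being exactly $\pi_{\mathcal{U}_{n+1}}(X[U])$. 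In the connected case $X[U]$ is connected, so this fibre is connected and every other fibre is a singleton, making $p_n$ monotone.

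To identify $X$ with $\varprojlim(G(\mathcal{U}_n), p_n)$, the compatible maps $\pi_{\mathcal{U}_n}$ induce a continuous $\Phi \colon X \to \varprojlim G(\mathcal{U}_n)$. For injectivity: if some coordinate of $\Phi(x)$ is an interior edge point its fibre is a single point; otherwise all coordinates are vertices and $x$ lies in cells $X[U_n]$ of diameter $< 1/n$, pinning $x$ down. Surjectivity follows because for a thread $(g_n)$ the preimages $\pi_{\mathcal{U}_n}^{-1}(g_n)$ are nonempty, compact and nested, so meet. Thus $\Phi$ is a continuous bijection of a compact space onto a Hausdorff space, i.e.\ a homeomorphism. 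Property~\ref{connectedness} is then Proposition~\ref{prop:invlimhyp}(1) together with the fact that an inverse limit of connected compacta is connected; for property~\ref{item:evencuts}, the forward direction is Proposition~\ref{prop:invlimhyp}(2), and for the converse I note that for any cut $E(A,B)$ one has $d(A,B) > 0$, so once $1/n < d(A,B)$ every cell of $\mathcal{U}_n$ falls entirely in $A$ or in $B$; two-colouring the vertices of $G(\mathcal{U}_n)$ accordingly makes the bichromatic edges precisely $E(A,B)$, which is even because every edge cut of a finite all-even-degree graph is even. I expect the main obstacle to be neither the homeomorphism nor the parity count -- both are mechanical given the earlier lemmas -- but rather the bookkeeping in the construction of the tower: recognising that loop edges must be removed by subdivision, and arranging refinement, vanishing mesh, and (in the connected case) connected cells simultaneously.
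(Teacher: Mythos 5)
Your proposal is correct and takes essentially the same approach as the paper: eliminate loops by subdivision, build a refining tower of multi-cuts via Lemma~\ref{basisregion} (the paper asks that the cells form a neighborhood basis of $V(X)$ where you ask for vanishing mesh---equivalent for forcing injectivity), form the associated quotient multi-graphs with the induced onto simplicial bonding maps, identify $X$ with the inverse limit via the compact-to-Hausdorff continuous bijection, and obtain monotonicity in the connected case by splitting cells into regions. Your explicit treatment of the converse directions of properties (1) and (2)---the inverse-limit-of-continua fact and the two-colouring of $G(\mathcal{U}_n)$ once the mesh is below $d(A,B)$---is slightly more detailed than the paper's bare appeal to Proposition~\ref{prop:invlimhyp}, but the substance is identical.
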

\begin{proof}
Let $X$ be a graph-like continuum with vertex set $V$ and edge set $E$. Without loss of generality,  $X$ contains no loops, as otherwise we can subdivide each edge once (this does not change the homeomorphism type of $X$, and the new edge set is still a dense open subset, so the new vertex set is a compact, zero-dimensional subspace as required).

Since $V$ is a compact, zero-dimensional metrizable space, we can find, as in Lemma~\ref{basisregion}, a sequence of multi-cuts $\set{\mathcal{U}_n}:{n \in \N}$ such that 
\begin{enumerate}[label=(\alph*)]
\item $\mathcal{U}_{n+1}$ is a refinement of $\mathcal{U}_n$,
\item $\bigcup_{n \in \N} \mathcal{U}_n$ forms a basis for $V(X)$, and
\end{enumerate}
Writing $\mathcal{U}_n=\{U^n_1, U^n_2, \ldots , U^n_{i(n)}\}$ we observe that every $v \in V$ has a unique description in terms of $\Set{v}=\bigcap_{n \in \N} U^n_{l(v)}$ and that conversely, for every nested sequence of cut elements, there is precisely one vertex in $\bigcap_{n \in N} U^n_{l_n}$ by compactness and $(b)$.

\smallskip
\noindent
{\bf The inverse system:} Let $\set{\mathcal{U}_n}:{n \in \N}$ be as above. To simplify notation, let $q_n$ stand for $\pi_{\mathcal{U}_n}$. For each $n\in\mathbb{N}$ let $f_n \colon G\p{\mathcal{U}_{n+1}}\to G\p{\mathcal{U}_n}$ be defined as
$$f_n(x)=q_n\p{q^{-1}_{n+1}(x)} \text{ for all }x\in G\p{\mathcal{U}_{n+1}}.$$ Observe that if $U_i^{n+1},U_j^{n+1}\subset U_s^n$, 
\begin{enumerate}[label=(\roman*)]
\item then $f\p{q_{n+1}(U_i^{n+1})}=f\p{q_{n+1}(U_j^{n+1})}=q_n(U_s^n)$;
\item  and if $e\in E(U_i^{n+1},U_j^{n+1})$; in particular $e\in E(U_s^n,U_s^n)$, then $f_n(e)=q_n(U_s^n)$.
\end{enumerate}
In particular, each $f_n$ is an onto simplicial map with non-trivial fibres only at vertices of $G\p{\script{U}_n}$. Then $\{G\p{\mathcal{U}_n},f_n\}_{n\in\N}$ is an inverse sequence of multi-graphs. Hence, its inverse limit is compact and nonempty. We will show that there is a continuous bijection 
$$f\colon X \to \lim_{\xleftarrow[n \in \N]{}} G(\script{U}_n).$$

For $x \in X$, we define $f(x) = (q_1(x), q_2(x), \ldots)$. By the product topology, this is a continuous map into the product $\prod_n G(\script{U}_n)$, as all coordinate maps $q_n$ are continuous. Moreover, it is straightforward from the definition of $f_n$ to check that $f(x) \in \lim\limits_{\longleftarrow} G(\mathcal{U}_n)$. That the map $f$ is surjective follows from the fact that each $q_n$ is continuous and $X$ is compact (see \cite[2.22]{Nadler}). Finally, $f$ is injective because of the neighborhood bases requirement (b) on $\mathcal{U}_n$.  Since $X$ is compact and $\lim\limits_{\longleftarrow} G(\mathcal{U}_n)$ is Hausdorff, it follows that $f$ is a homeomorphism as desired, and properties (\ref{connectedness}) and (\ref{item:evencuts}) now follow from Proposition \ref{prop:invlimhyp}.

For the moreover part, simply require that besides (a) and (b), our sequence of multi-cuts $\set{\script{U}_n}:{n \in \N}$ also satisfies
\begin{enumerate}
\item[(c)] every multi-cut $\script{U}_n$ partitions $V(X)$ into regions. 
\end{enumerate}
That this is possible follows from Lemma~\ref{basisregion}; and clearly, property (c) implies that each $f_n$ as defined above will be a monotone map.
\end{proof}

In fact,  a converse of the above theorem holds. This has been mentioned, for completely regular continua, by Nikiel, \cite[3.10(i)]{inverselimitsnikiel}, though without proof. We provide the proof in the language of graph-like continua.

\begin{theorem}
Let $X$ be a countable inverse limit of connected multi-graphs $X_n$ with finite vertex sets $V(X_n)$ and onto monotone bonding maps $f_n \colon X_{n+1} \to X_n$ satisfying: $(+)$ $f_n(V(X_{n+1})) \subseteq V(X_{n})$. 
Then $X$ is a graph-like continuum.
\label{gralikeasinverse}
\end{theorem}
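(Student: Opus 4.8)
The plan is to argue directly from the definition of a graph-like continuum: I exhibit the vertex set as a set of ``threads'' and show that its complement is a $1$-manifold consisting of open arcs. First I record the easy parts. Since $X=\varprojlim X_n$ is a countable inverse limit of continua (the finite connected multi-graphs $X_n$) with onto bonding maps, it is a nonempty compact, connected, metrizable space, i.e.\ a continuum. Let $\pi_n\colon X\to X_n$ be the $n$-th projection and set $V=\{x\in X:\pi_n(x)\in V(X_n)\text{ for all }n\}$; by $(+)$ this is precisely $\varprojlim\bigl(V(X_n),f_n|\bigr)$. Condition $(+)$ also shows that, for any thread, the set of levels at which it is a vertex is an initial segment of $\N$, so that $X\setminus V=\bigcup_n \pi_n^{-1}\bigl(X_n\setminus V(X_n)\bigr)$ is an increasing union of open sets. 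Then $V$ is closed (an intersection of the closed sets $\pi_n^{-1}(V(X_n))$), zero-dimensional (an inverse limit of finite discrete sets), and nonempty (an inverse limit of nonempty finite sets). It remains only to prove that $X\setminus V\cong E\times(0,1)$ for a countable discrete index set $E$.

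For the local structure, fix $x\in X\setminus V$ and a level $N$ with $x_N\notin V(X_N)$; then $x_N$ lies in the interior of some edge $e$ of $X_N$, so I may choose an open subarc $J\ni x_N$ whose closure satisfies $\closure{J}\subseteq \mathrm{int}(e)$. Writing $f^m_N=f_{m-1}\circ\cdots\circ f_N\colon X_m\to X_N$, put $A_m=(f^m_N)^{-1}(\closure{J})$ for $m\ge N$. I would verify three points. (a) Each $f^m_N$ is a monotone surjection, since a composite of monotone surjections of continua is monotone and onto. (b) Because every $f_n$ sends vertices to vertices, $f^m_N(V(X_m))\subseteq V(X_N)$ is disjoint from $\closure{J}$, so $A_m$ contains no vertex of $X_m$; that is, $A_m$ lies inside the $1$-manifold $X_m\setminus V(X_m)$, which is a disjoint union of finitely many open arcs. (c) As the preimage of the subcontinuum $\closure{J}$ under the monotone surjection $f^m_N$, the set $A_m$ is itself a subcontinuum; being a nondegenerate subcontinuum of a disjoint union of open arcs, it is a closed arc. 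Finally the restricted bonding map $A_{m+1}=f_m^{-1}(A_m)\to A_m$ is a monotone surjection of arcs.

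To assemble the neighbourhood, identify $\pi_N^{-1}(\closure{J})$ with $\varprojlim_{m\ge N}A_m$. By the classical theorem that an inverse limit of arcs with monotone onto bonding maps is an arc (Capel), $\pi_N^{-1}(\closure{J})$ is an arc, carried onto $\closure{J}$ by a monotone surjection. Deleting the two end-fibres $\pi_N^{-1}(\partial\closure{J})$---which are exactly the two end-subintervals of this arc---leaves $\pi_N^{-1}(J)$, an open arc homeomorphic to $(0,1)$ that is open in $X$ and contains $x$ as an interior point. Hence $X\setminus V$ is a $1$-manifold without boundary. No component can be a circle: such a component would be compact, hence clopen in $X$, forcing $X$ to equal it and $V=\emptyset$, contrary to $V\neq\emptyset$. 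So every component is homeomorphic to $(0,1)$; and since the separable metric space $X\setminus V$ has only countably many (open) components, indexed by a countable discrete set $E$, we obtain $X\setminus V\cong E\times(0,1)$. Thus $(X,V,E)$ is a graph-like continuum.

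The main obstacle is step (c): showing that the preimages $A_m$ are genuine arcs rather than more complicated subcontinua. This is exactly where hypothesis $(+)$ is indispensable. Monotonicity alone only guarantees that $A_m$ is connected, and for a map between graphs the preimage of a subcontinuum could be a nondegenerate subgraph; it is the vertices-to-vertices condition that confines $A_m$ to the $1$-manifold $X_m\setminus V(X_m)$, where the only nondegenerate subcontinua are arcs. The remaining nontrivial input is the classical identification of monotone inverse limits of arcs as arcs, together with the standard facts that composites of monotone surjections of continua are monotone surjections and that inverse limits of finite sets are nonempty.
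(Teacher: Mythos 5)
Your proposal is correct, and it proves the theorem by a genuinely different route than the paper. The paper reduces to its characterization Theorem~\ref{chargraphlike}: taking the same candidate vertex set $V=\bigcap_n \pi_n^{-1}(V(X_n))$, it shows that $X$ is regular and that every point outside $V$ has order $2$. Both steps rest on a single countability trick: because the bonding maps are monotone, the nondegenerate fibres $f_{m,n}^{-1}(y)$ form disjoint subcontinua of the graph $X_m$ with nonempty interior, so the set $P_n$ of points of $X_n$ with nondegenerate $\pi_n$-fibre is countable; one may therefore choose open $W \subset X_k$ with finite (respectively two-point) boundary missing $P_k$, so that $\pi_k^{-1}(W)$ again has finite (respectively two-point) boundary. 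You instead verify the definition of graph-like directly: hypothesis $(+)$ confines the preimages $A_m = (f^m_N)^{-1}(\closure{J})$ of a closed subarc of an edge to the disjoint union of open arcs $X_m \setminus V(X_m)$, monotonicity makes each $A_m$ a subcontinuum and hence an arc, and Capel's theorem---which the paper invokes only for circles, in Proposition~\ref{theorem16}---identifies $\pi_N^{-1}(\closure{J})$ as an arc, whence $\pi_N^{-1}(J)$ is an open-arc neighbourhood and $X\setminus V$ is a $1$-manifold with no circle components. The paper's route buys regularity of $X$ explicitly, tying the result into the Theorem~(A) machinery, and avoids the classification of $1$-manifolds; your route stays closer to the bare definition and yields the concrete picture of the edges of $X$ as monotone inverse limits of arcs. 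Two facts you assert without proof are standard and easily filled, but worth flagging: that the restricted projection $\pi_N \colon \pi_N^{-1}(\closure{J}) \to \closure{J}$ is monotone (its fibres are inverse limits of the continua $(f^m_N)^{-1}(y)$, hence continua), and that a monotone surjection between arcs is weakly order-monotone, so that the two end-fibres really are end-subintervals of the limit arc---this last point is what guarantees that your point $x$ is not an endpoint of $\pi_N^{-1}(\closure{J})$, without which the manifold claim would fail. Finally, your formula $f^m_N = f_{m-1}\circ\cdots\circ f_N$ has the composition written in the wrong order (it should be $f_N\circ\cdots\circ f_{m-1}$), a harmless typo.
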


\begin{proof}
By Theorem~\ref{chargraphlike}, every regular continuum with the property that all but a closed zero-dimensional subset of points are of order $2$ is a graph-like continuum. 

That $X$ is regular follows from \cite[3.6]{inverselimitsnikiel}. For sake of completeness, we provide  the argument. Let $\pi_n \colon X \to X_n$ denote the projection maps, and for $m\geq n$ write $f_{m,n} = f_n \circ f_{n+1} \circ \cdots \circ f_{m-1}\circ f_m \colon X_{m+1} \to X_n$.

\emph{Claim: For every $n \in \N$, the set $P_n = \set{y \in X_n}:{\cardinality{\pi_n^{-1}(y)} > 1}$ is countable.}

This holds, because for every $m\geq n$, the set $Q_m = \set{y \in X_n}:{\cardinality{f_{m,n}^{-1}(y)} > 1}$ is countable: By assumption, all bonding maps $f_m$ are monotone, and hence so is $f_{m,n}$. Thus, the collection of non-degenerate $f_{m,n}^{-1}(y)$ from a disjoint collections of subcontinua of $X_{m}$, all with non-empty interior. It follows that $P_n = \bigcup_{m \geq n} Q_m$ is countable, completing the proof of the claim.

To conclude that $X$ is regular, let $x \in X$ and let $U$ be an open neighborhood of $x \in X$. Then there is $k\in \N$ and an open subset $W \subset X_k$ with $x \in \pi_k^{-1}(W) \subset U$. Note that since $X_k$ is a graph, and $P_k$ is countable by the claim, we may choose $W$ with finite boundary such that $\bd{W} \cap P_k = \emptyset$. It follows that $\pi_k^{-1}(W)$ has finite boundary, as well. 

Our candidate for the vertex set of $X$ is
$V(X)= \bigcap_{n \in \N} \pi_n^{-1}(V(X_n))$. 
By $(+)$, the family $\set{\p{V(X_n),f_n}}:{n \in \N}$ gives a well-defined inverse limit, which is identical with our vertex set, i.e.\
$V(X) = \lim_{\leftarrow} \Set{V(X_n),f_n}$.
Since all $V(X_n)$ are finite discrete sets, it follows that $V(X)$ is a compact zero-dimensional metric space, as desired.

To see that elements $y \in X \setminus V(X)$ have order $2$, note that $y \notin V(X)$ means there is an index $N \in \N$ such that $\pi_n(y)$ is an interior point of an edge of $X_n$ for all $n \geq N$. Consider an open neighborhood $U$ with $y \in U \subset X$. As before, there is an index $k> N$ and an open subset $W \subset X_k$ with $y \in \pi_k^{-1}(W) \subset U$. Since $\pi_k(y) \in X_k$ is an interior point of an edge, and $P_k$ is countable by the claim, we may assume that $W$ has 2-point boundary with $\bd{W} \cap P_k =\emptyset$. It follows that $\pi_k^{-1}(W)$ has a 2-point boundary, as well.
\end{proof}

In fact, the class of continua, which can be represented as countable monotone inverse limits of finite connected multi-graphs are precisely the so-called \emph{totally regular continua}, \cite{totallyregular} --  for each countable  $P\subset X$, there is a basis $\script{B}$ of open sets for $X$ so that for each $B \in \script{B}$, $P\cap \bd{B} = \emptyset$ and $B$ has finite boundary. These continua have also been studied under the name \emph{continua of finite degree}.
The class of totally regular continua is strictly larger than the class of completely regular continua. In particular, the condition in Theorem~\ref{gralikeasinverse} on $f_n$ having nontrivial fibers only at vertices cannot be omitted. For example, the universal dendrite $D_n$ of order $n$ can be obtained as the inverse limit of finite connected graphs, see \cite[Section 3]{charatonik}, and $D_n$ has a dense set of points of order $\neq 2$.

 In \cite{graphlikeplanar} the graph-like continuum depicted on the left side of the diagram below served to show that graph-like continua form a wider class than Freudenthal compactifications of locally finite graphs. 
Note that the two black nodes simultaneously act as ends for the blue double ladder, and as vertices for the red edge.

\begin{center}
\begin{tikzpicture}[thick]

\clip (-1,-4.25) rectangle (9,0.25);

\draw (0.5,0) node[circle, fill=black, inner sep=1.5] (v1){};
\draw (1.5,0) node[circle, fill=black, inner sep=1.5] (v2){};

\draw[blue] (0.5,0) .. controls (-2,-4) and (4,-4) .. (1.5,0)
node[circle,pos=0.05,fill=blue,inner sep=1.2] (p1) {}
node[circle,pos=0.11,fill=blue,inner sep=1.2] (p2) {}
node[circle,pos=0.2,fill=blue,inner sep=1.2] (p3) {}
node[circle,pos=0.32,fill=blue,inner sep=1.2] (p4) {}
node[circle,pos=0.5,fill=blue,inner sep=1.2] (p5) {}
node[circle,pos=0.68,fill=blue,inner sep=1.2] (p6) {}
node[circle,pos=0.8,fill=blue,inner sep=1.2] (p7) {}
node[circle,pos=0.89,fill=blue,inner sep=1.2] (p8) {}
node[circle,pos=0.95,fill=blue,inner sep=1.2] (p9) {};

\draw[blue] (0.5,0) .. controls (-5,-5.5) and (7,-5.5) .. (1.5,0)
node[circle,pos=0.03,fill=blue,inner sep=1.2] (n1) {}
node[circle,pos=0.065,fill=blue,inner sep=1.2] (n2) {}
node[circle,pos=0.13,fill=blue,inner sep=1.2] (n3) {}
node[circle,pos=0.28,fill=blue,inner sep=1.2] (n4) {}
node[circle,pos=0.5,fill=blue,inner sep=1.2] (n5) {}
node[circle,pos=0.72, fill=blue,inner sep=1.2] (n6) {}
node[circle,pos=0.87,fill=blue,inner sep=1.2] (n7) {}
node[circle,pos=0.935,fill=blue,inner sep=1.2] (n8) {}
node[circle,pos=0.97,fill=blue,inner sep=1.2] (n9) {};

\draw[red] (v1) -- (v2);
\draw[blue] (p1) -- (n1);
\draw[blue] (p2) -- (n2);
\draw[blue] (p3) -- (n3);
\draw[blue] (p4) -- (n4);
\draw[blue] (p5) -- (n5);
\draw[blue] (p6) -- (n6);
\draw[blue] (p7) -- (n7);
\draw[blue] (p8) -- (n8);
\draw[blue] (p9) -- (n9);

\draw (6.5,0) node[circle, fill=black, inner sep=1.5] (v1){};
\draw (7.5,0) node[circle, fill=black, inner sep=1.5] (v2){};

\draw[blue] (6.5,0) .. controls (4,-4) and (10,-4) .. (7.5,0)
node[circle,pos=0.05,fill=blue,inner sep=1.2] (p1) {}
node[circle,pos=0.11,fill=blue,inner sep=1.2] (p2) {}
node[circle,pos=0.2,fill=blue,inner sep=1.2] (p3) {}
node[circle,pos=0.32,fill=blue,inner sep=1.2] (p4) {}
node[circle,pos=0.5,fill=blue,inner sep=1.2] (p5) {}
node[circle,pos=0.68,fill=blue,inner sep=1.2] (p6) {}
node[circle,pos=0.8,fill=blue,inner sep=1.2] (p7) {}
node[circle,pos=0.89,fill=blue,inner sep=1.2] (p8) {}
node[circle,pos=0.95,fill=blue,inner sep=1.2] (p9) {};

\draw[blue] (6.5,0) .. controls (1,-5.5) and (13,-5.5) .. (7.5,0)
node[circle,pos=0.03,fill=blue,inner sep=1.2] (n1) {}
node[circle,pos=0.065,fill=blue,inner sep=1.2] (n2) {}
node[circle,pos=0.13,fill=blue,inner sep=1.2] (n3) {}
node[circle,pos=0.28,fill=blue,inner sep=1.2] (n4) {}
node[circle,pos=0.5,fill=blue,inner sep=1.2] (n5) {}
node[circle,pos=0.72, fill=blue,inner sep=1.2] (n6) {}
node[circle,pos=0.87,fill=blue,inner sep=1.2] (n7) {}
node[circle,pos=0.935,fill=blue,inner sep=1.2] (n8) {}
node[circle,pos=0.97,fill=blue,inner sep=1.2] (n9) {};

\draw[red] (6.5,0) .. controls (5.5,-3.1) and (8.5,-3.1) .. (7.5,0)
node[circle,pos=0.07,fill=red,inner sep=1.2] (q1) {}
node[circle,pos=0.15,fill=red,inner sep=1.2] (q2) {}
node[circle,pos=0.28,fill=red,inner sep=1.2] (q3) {}
node[circle,pos=0.39,fill=red,inner sep=1.2] (q4) {}
node[circle,pos=0.5,fill=red,inner sep=1.2] (q5) {}
node[circle,pos=0.61, fill=red,inner sep=1.2] (q6) {}
node[circle,pos=0.72,fill=red,inner sep=1.2] (q7) {}
node[circle,pos=0.85,fill=red,inner sep=1.2] (q8) {}
node[circle,pos=0.93,fill=red,inner sep=1.2] (q9) {};

\draw[blue] (p1) -- (n1);
\draw[blue] (p2) -- (n2);
\draw[blue] (p3) -- (n3);
\draw[blue] (p4) -- (n4);
\draw[blue] (p5) -- (n5);
\draw[blue] (p6) -- (n6);
\draw[blue] (p7) -- (n7);
\draw[blue] (p8) -- (n8);
\draw[blue] (p9) -- (n9);

\draw[black,dotted] (q1) -- (p1);
\draw[black,dotted] (q2) -- (p2);
\draw[black,dotted] (q3) -- (p3);
\draw[black,dotted] (q4) -- (p4);
\draw[black,dotted] (q5) -- (p5);
\draw[black,dotted] (q6) -- (p6);
\draw[black,dotted] (q7) -- (p7);
\draw[black,dotted] (q8) -- (p8);
\draw[black,dotted] (q9) -- (p9);

\node at (4,-2) {$\hookrightarrow$};
\end{tikzpicture}
\end{center}


However, after subdividing the red edge appropriately -- turning it into a double ray -- we see from the right side that it can be realized as a standard subspace of the Freudenthal compactification of the triple ladder.
We now show that every graph-like continuum has the same property.
\begin{theorem}
\label{standardsubspacetheorem}
Every graph-like continuum can be embedded as a standard subspace of a Freudenthal compactification of a locally finite graph.
\end{theorem}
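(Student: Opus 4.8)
The plan is to build the locally finite graph $H$ directly from the inverse limit representation of Theorem~\ref{inverselimit}, realising each vertex of $X$ as an \emph{end} of $H$ and each edge of $X$ as a \emph{double ray} running out to the two ends corresponding to its endpoints. First I would set things up. Subdividing once if necessary (which changes neither the homeomorphism type of $X$ nor the property of being a standard subspace), I may assume $X$ has no loops, and write $X=\lim_{\longleftarrow}(G_n,f_n)$ as in Theorem~\ref{inverselimit}, with all $G_n$ connected and the bonding maps onto, simplicial and monotone with non-trivial fibres at vertices only. Since fibres over interior edge points are singletons, each edge of $G_n$ lifts uniquely to an edge of $G_{n+1}$; thus $E(X)=\bigcup_n E(G_n)$ as an increasing union, and each edge $e$ first appears at a level $n_e$. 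Writing $V_n=V(G_n)$, the projections organise the vertices into a tree $T$ whose level-$n$ nodes are $V_n$, with $w\in V_{n+1}$ joined to its parent $f_n(w)\in V_n$. As each $V_n$ is finite and each bonding map onto, $T$ is locally finite with every branch infinite, and its ends are precisely the threads $\lim_{\longleftarrow}V_n=V(X)$; I write $\eta_a$ for the end corresponding to $a\in V(X)$ and $(a_n)_n$ for its branch. For an edge $e=ab$ its endpoints in $G_m$ ($m\geq n_e$) trace out the two branches of $a$ and of $b$.

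Next I construct $H$. Its skeleton is the tree $T$ (these ``vertical'' edges serve to bundle rays into ends). For each edge $e=ab\in E(X)$ I add a double ray $D_e$: disjoint copies $a^e_m,b^e_m$ ($m\geq n_e$) of the branch nodes, ray edges $a^e_m a^e_{m+1}$ and $b^e_m b^e_{m+1}$, and a central edge $a^e_{n_e}b^e_{n_e}$, so that $D_e$ is a two-way infinite path whose two tails follow the branches of $a$ and of $b$. To force these tails into the ends $\eta_a,\eta_b$ I add ``connector'' edges joining each $a^e_m$ to the tree node $(m,a_m)$, and likewise on the $b$-side. Finally set $D=\bigcup_e D_e$, the union of all these double rays, and let the candidate standard subspace be $\overline{D}$, the closure of $D$ in $\gamma H$.

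The verifications then run as follows. Local finiteness of $H$ is immediate: at a tree node $(m,a_m)$ the degree is $1$ (parent) plus $|f_m^{-1}(a_m)|$ (children) plus $\deg_{G_m}(a_m)$ (connectors), which is finite since cuts are finite (Lemma~\ref{finitecuts}); every other vertex has degree at most $3$. Connectedness of $H$ follows from connectedness of the $G_n$. The connectors make every tail of every $D_e$ incident with $a$ converge to the single end $\eta_a$, so each $D_e$ becomes an arc from $\eta_a$ to $\eta_b$ and $\overline{D}=D\cup\{\eta_a:a\in V(X)\}$; this is a \emph{standard} subspace because any connector or tree edge meets $\overline{D}$ only in a vertex, never in the interior of an edge. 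Since only the double rays lie in $D$ and distinct $D_e$ share at most an end, the natural map $\varphi\colon\overline{D}\to X$ sending $\eta_a\mapsto a$ and each arc $D_e\cup\{\eta_a,\eta_b\}$ homeomorphically onto the closed edge $\overline{e}$ is a well-defined continuous bijection, hence a homeomorphism of the compact Hausdorff spaces.

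I expect the main obstacle to be verifying that the end-correspondence and the map $\varphi$ behave correctly in the limit, i.e.\ that $\varphi$ is genuinely continuous at the ends; two points carry the weight, and both are supplied by earlier results. First, distinct vertices give distinct, non-merging ends: the only direct connections between the territories of $\eta_a$ and $\eta_b$ are the double rays of the edges between $a$ and $b$, and since $E(X)$ is a null sequence (Corollary~\ref{cor:glislc}) there are only finitely many such edges, so no finite family of double rays can fuse two ends, while finiteness of cuts (Lemma~\ref{finitecuts}) exhibits explicit finite separators around each $\eta_a$. Second, the null-sequence property forces $\operatorname{diam}(\overline{e})\to 0$, which is exactly what is needed to match a basic neighbourhood of $\eta_a$ in $\gamma H$ (obtained by passing far out along the branch of $a$) with a basic neighbourhood of $a$ in $X$ (an induced subspace $X[C]$ on a small region, as in Lemma~\ref{basisregion}); this makes $\varphi$ and $\varphi^{-1}$ continuous and completes the argument, so that $X\cong\overline{D}$ is embedded as a standard subspace of $\gamma H$.
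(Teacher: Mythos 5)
Your proposal is correct, but it takes a genuinely different route from the paper's proof, so a comparison is worthwhile. Both arguments start from the inverse-limit representation of Theorem~\ref{inverselimit}, but they diverge immediately. The paper realizes the Freudenthal compactification itself as an inverse limit $\gamma L = \lim_{\leftarrow} L^n$ of finite contraction quotients, and then recursively builds $L$ (via the line-graph gadget $G^{\circledast}$, the pullbacks $\tilde{F}_n$ and the partial line graphs $L_v$) so that each $G_n$ sits inside $L^n$ as a subdivided topological minor $T_n$ with commuting bonding maps; the homeomorphism $X \cong \lim_{\leftarrow} T_n \hookrightarrow \lim_{\leftarrow} L^n = \gamma L$ then falls out of the inverse-limit formalism, with no pointwise continuity argument at all. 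You instead define the locally finite graph $H$ in one shot --- the tree of vertex classes, one double ray per edge of $X$, and connector edges --- and then carry the topological burden yourself: identifying the ends of $H$ with $V(X)$ via explicit finite separators (level-$N$ tree nodes plus the finitely many ray nodes of edges crossing the level-$N$ cut, which is where Lemma~\ref{finitecuts} enters), and checking that the bijection $\varphi\colon \overline{D}\to X$ is continuous at ends by matching the ``territory'' neighborhoods in $\gamma H$ against neighborhoods $X[C]$ plus edge stubs as in Lemma~\ref{basisregion}. Your connector edges play exactly the role of the paper's line-graph edges: they bundle all edge-rays at a vertex of $X$ into a single end. What each approach buys: the paper's recursion is combinatorially intricate (Claims 1--6) but makes the limit topology automatic; your graph is much simpler to write down, and the end structure is transparent thanks to the tree spine, but the homeomorphism must be verified by hand --- you correctly flag this as the crux, and your sketch is completable as stated. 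One small correction: the operative fact for continuity at the ends is not really the null-sequence property of the edges, but rather that only finitely many edges leave each clopen class (Lemma~\ref{finitecuts}), combined with the observation that the tail segments of each arc $\overline{D_e}$ beyond level $N$ shrink to the end as $N\to\infty$, so their images under your fixed homeomorphism onto $\overline{e}$ shrink to the corresponding endpoint of $e$; this is what lets the finitely many crossing edges be absorbed into the stubs of a given neighborhood of $a$.
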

We remark that Theorem~\ref{standardsubspacetheorem} can  be rephrased as saying that \emph{every graph-like continuum has a subdivision, turning each edge into a double ray, which is a standard subspace of a Freudenthal compactification of a locally finite graph.}

In the proof of Theorem~\ref{standardsubspacetheorem}, we use the following notation. Let $G$ be a finite, connected graph with vertex set $V$, and let $L(G)$ be its (connected) line graph, both considered as 1-complexes. For every edge $e \subset G$, let $m_e \in e$ be the mid-point of that edge. Then by $G^\circledast$ we denote the graph 
$$G^{\circledast} = \p{G \oplus L(G)} /_\sim, \; \text{ where } m_e \sim e \text{ for } m_e \in G \text{ and } e \in V(L).$$
Geometrically, we subdivide each edge of $G$ in its mid-point, and connect two new such vertices if and only if their underlying edges share a common vertex.

\begin{proof}[Proof of Theorem~\ref{standardsubspacetheorem}]
Let $X$ be a graph-like continuum. Represent $X$ as a monotone inverse limit of finite multi-graphs $G_n$ with onto, monotone simplicial bonding maps $f_n \colon G_{n+1} \to G_n$ having non-trivial fibres at vertices only. 

Recall first that the Freudenthal compactification of a locally finite graph can be realized as an inverse limit: Let $L$ be a locally finite graph with vertex set $V(L)= \set{v_k}:{k \in \N}$ say. Let $k_n$ be an increasing sequence of integers, and consider for each $n$ the induced subgraph $L_n = L[v_0, \ldots, v_{k_n}]$. Let $L^n$ denote the multi-graph quotient of $L$ where we contract every connected component of the induced subgraph $L[V(L) \setminus V(L_n)]$, deleting all arising loops. Since $L$ was locally finite, it is easy to check that $L^n$ is a finite multi-graph. Then $\set{L^n}:{n \in \N}$ forms an inverse system under that natural projection maps $g_n \colon L^{n+1} \to L^n$, such that the resulting inverse limit $\lim_{\leftarrow} L^n \cong \gamma L$
is the Freudenthal compactification of $L$; moreover, this holds independently of the sequence $k_n$. 

Now our proof strategy is as follows. We plan to find a locally finite graph $L$ as above such that there are subgraphs $T_n \subset L^n$ such that
\begin{enumerate}[label=(\roman*)]
	\item\label{subsystem} $\hat{g}_n = g_n \restriction V(T_{n+1}) \to V(T_n)$ restricts to a surjection (so that the $T_n$ form a subsystem of the inverse limit with bonding maps $\hat{g}_n$), and 
	\item\label{topminors} for each $n \in \N$, the graph $T_n$ witnesses that $G_n$ is a topological minor of $L^n$, meaning there are homeomorphisms $h_n \colon G_n \to T_n$ of the underlying 1-complexes which map $V(G_n) \hookrightarrow V(T_n)$, and map distinct edges $vw$ and $xy$ of $G_n$ to independent $h_n(v)h_n(w)$- and $h_n(x)h_n(y)$-paths in $T_n$, and
    \item\label{commute} we have $\hat{g}_n \circ h_{n+1} =  h_n \circ f_n$ for all $n$, i.e.\ the following diagram commutes: \begin{center}
\begin{tikzpicture}
  \matrix (m)
    [
      matrix of math nodes,
      row sep    = 3em,
      column sep = 4em
    ]
    {
      T_n             & T_{n+1} \\
      G_n  &  G_{n+1}           \\
    };
  \path
    (m-2-1) edge [->] node [left] {$h_{n}$} (m-1-1)
    (m-1-1.east |- m-1-2)
      edge [<-] node [above] {$\hat{g}_n$} (m-1-2);
        \path
     (m-2-1) edge [<-] node [above] {$f_n$} (m-2-2);
        \path
     (m-2-2) edge [->] node [right] {$h_{n+1}$} (m-1-2);
\end{tikzpicture}
\end{center}
\end{enumerate}
Under these assumptions, it follows that $X = \lim_{\leftarrow} G_n$ is homeomorphic to the inverse limit $\lim_{\leftarrow} T_n$, which in turn, as it was constructed as a subsystem, embeds into the inverse limit $\lim_{\leftarrow} L^n = \gamma L$, which equals the Freudenthal compactification of $L$ by the foregoing discussion. Thus, it remains to find a locally finite graph $L$ subject to requirements \ref{subsystem}--\ref{commute}.

We will build this locally finite graph $L$ by geometric considerations as a direct limit of finite connected graphs $F_n$, so that $F_n = L[V(F_n)] = L_n$. More precisely, we will define finite connected 1-complexes $F_n$ such that 
\begin{enumerate}
\item\label{directlimit} $F_0 \hookrightarrow F_1 \hookrightarrow F_2 \hookrightarrow \cdots$ forms a direct limit such that for all $n>0$, no vertex of $F_{n+1} \setminus F_{n}$ is incident with a vertex of $F_{n-1}$, and 
\item\label{linegraph} $F_n$ is embedded together with $G_n$ in some ambient 1-complex $H_n=F_n \cup G_n$ such that
	\begin{enumerate}[label=(\alph*)]
  	 	\item no vertex of $G_n$ lies in $F_n$,
        \item every vertex of $F_n$ lies on an edge of $G_n$,
        \item every open edge of $F_n$ is either disjoint from $G_n$, or completely contained in an edge of $G_n$, and 
    	\item every edge of $G_n$ intersects with $F_n$ in a non-trivial path $P \subset F_n$ such that the end-vertices of $P$ are vertices of $V(F_n) \setminus V(F_{n-1})$.
   \end{enumerate}
\end{enumerate}
To begin, put $H_0=G_0^{\circledast}$, and let $F_0$ denote the subgraph $L(G_0) \subset G_0^{\circledast}$. Then (\ref{linegraph}) is satisfied since vertices of $F_0$ are mid-points of edges of $G_0$, and every open edge of $F_0$ is disjoint from $G_0$; and (\ref{directlimit}) is trivially true.

\begin{figure}[ht]

\begin{tikzpicture}

 \node[draw, circle, scale=1] (x) at (0,0) {$x$};
 \node[draw, circle, scale=1] (y) at (3,0) {$y$};
 \node[draw, circle, scale=1] (z) at (0,3) {$z$};
 \node[draw, circle, scale=1] (v) at (3,3) {$v$}; 

	\draw[thick] (x) -- (y);
    \draw[thick] (x)-- (z);
    \draw[thick] (z)-- (v);
    \draw[thick] (y)-- (v);
    
  \node[draw, circle, red, fill=red, inner sep=0.4pt,minimum width=4pt] (e_1) at (0,1.5) {$$}; 
  \node[draw, circle, red, fill=red, inner sep=0.4pt,minimum width=4pt] (e_2) at (1.5,0) {$$};
  \node[draw, circle, red, fill=red, inner sep=0.4pt,minimum width=4pt] (e_3) at (3,1.5) {$$};
  \node[draw, circle, red, fill=red, inner sep=0.4pt,minimum width=4pt] (e_4) at (1.5,3) {$$};
 \draw [thick,red, bend left,] (e_1) to (e_2);   
  \draw [thick,red, bend left,] (e_2) to (e_3);   
   \draw [thick,red, bend left,] (e_3) to (e_4);   
    \draw [thick,red, bend left,] (e_4) to (e_1);   

\node (inversemap) at (4,1.5) {$\xleftarrow{f_{0}}$};

 \node[draw, circle, scale=1] (x1) at (5,0) {$x$};
 \node[draw, circle, scale=1] (y1) at (8,0) {$y$};
 \node[draw, circle, scale=1] (z1) at (5,3) {$z$};

\node[draw, circle, scale=.7] (v11) at (7.3,3) {$v_1$}; 
\node[draw, circle, scale=.7] (v12) at (8,2.3) {$v_2$}; 
\node[draw, circle, scale=.7] (v13) at (8.2,3.2) {$v_3$}; 

	\draw[thick] (x1) -- (y1);
    \draw[thick] (x1)-- (z1);
    \draw[thick] (z1)-- (v11);
    \draw[thick] (y1)-- (v12);
    \draw[thick] (v11)-- (v12);
    \draw[thick] (v11)-- (v13);
    \draw[thick] (v12)-- (v13);
    
      \node[draw, circle, red, fill=red, inner sep=0.4pt,minimum width=4pt] (ee_1) at (5,1.5) {$$}; 
  \node[draw, circle, red, fill=red, inner sep=0.4pt,minimum width=4pt] (ee_2) at (6.5,0) {$$};
  \node[draw, circle, red, fill=red, inner sep=0.4pt,minimum width=4pt] (ee_3) at (8,1.15) {$$};
  \node[draw, circle, red, fill=red, inner sep=0.4pt,minimum width=4pt] (ee_4) at (6.15,3) {$$};
 \draw [thick,red, bend left,] (ee_1) to (ee_2);   
  \draw [thick,red, bend left,] (ee_2) to (ee_3);   
   \draw [thick,red, bend left,] (ee_3) to (ee_4);   
    \draw [thick,red, bend left,] (ee_4) to (ee_1); 
    
     \node[draw, circle, blue, fill=blue, inner sep=0.4pt,minimum width=4pt] (e_5) at (6.675,3) {$$}; 
  \node[draw, circle, blue, fill=blue, inner sep=0.4pt,minimum width=4pt] (e_6) at (8,1.675) {$$};
  \node[draw, circle, blue, fill=blue, inner sep=0.4pt,minimum width=4pt] (e_7) at (7.75,3.095) {$$};
  \node[draw, circle, blue, fill=blue, inner sep=0.4pt,minimum width=4pt] (e_8) at (7.65,2.65) {$$};
    \node[draw, circle, blue, fill=blue, inner sep=0.4pt,minimum width=4pt] (e_9) at (8.1,2.75) {$$};
    
     \draw [thick,blue, bend left,] (e_6) to (e_5);   
     
       \draw [thick,blue, bend left,] (e_6) to (e_8);  
         \draw [thick,blue, bend left,] (e_8) to (e_5);  
         
         \draw [thick,blue, bend left=60] (e_9) to (e_6);  
         \draw [thick,blue, bend left=60] (e_5) to (e_7); 
         
         \draw[thick, blue, bend left] (e_7) -- (e_8);
  		  \draw[thick, blue, bend left] (e_8)-- (e_9);
      	\draw[thick, blue, bend left] (e_9)-- (e_7);

\node[draw, circle, blue, fill=blue, inner sep=0.4pt,minimum width=4pt] (e_10) at (5.675,3) {$$};  
\node[draw, circle, blue, fill=blue, inner sep=0.4pt,minimum width=4pt] (e_11) at (5,2.25) {$$};  
        \draw [thick,blue, bend left,] (e_10) to (e_11); 
        
        \node[draw, circle, blue, fill=blue, inner sep=0.4pt,minimum width=4pt] (e_12) at (5,0.75) {$$};  
\node[draw, circle, blue, fill=blue, inner sep=0.4pt,minimum width=4pt] (e_13) at (5.75,0) {$$};  
        \draw [thick,blue, bend left,] (e_12) to (e_13); 
        
\node[draw, circle, blue, fill=blue, inner sep=0.4pt,minimum width=4pt] (e_14) at (7.25,0) {$$};  
\node[draw, circle, blue, fill=blue, inner sep=0.4pt,minimum width=4pt] (e_15) at (8,0.7) {$$};  
        \draw [thick,blue, bend left,] (e_14) to (e_15);

        	\draw[very thick, ForestGreen, bend left] (ee_4)-- (e_5);
 	     	\draw[very thick, ForestGreen, left] (ee_3)-- (e_6);       
        	\draw[very thick, ForestGreen, left] (ee_4)-- (e_10);
            \draw[very thick, ForestGreen, left] (ee_1)-- (e_11);
            \draw[very thick, ForestGreen, left] (ee_1)-- (e_12);
            \draw[very thick, ForestGreen, left] (ee_2)-- (e_13);
            \draw[very thick, ForestGreen, left] (ee_2)-- (e_14);
            \draw[very thick, ForestGreen, left] (ee_3)-- (e_15);

\end{tikzpicture}
\caption{Depicts the first bonding map $f_0$ between graphs $G_1$ and $G_0$ in black, where $f\p{\Set{v_1,v_2,v_3}} = \singleton{v}$. Further, the figure on the left shows $F_0 \subset G_0^{\circledast}$ in red, and on the right $F_1 \subset H_1$ as the union of $\tilde{F}_0$ in red, $\bigcup_v L_v$ in blue, and edges induced by $\tilde{F}_0$ and $\bigcup_v L_v$ in green.}
\label{freundenthal}
\end{figure}
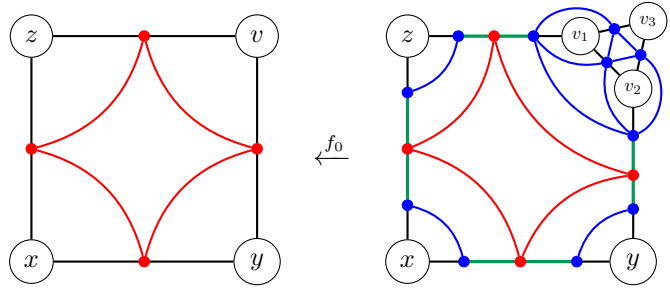

Now inductively, suppose we have already defined $H_n = G_n \cup F_n$ for some $n \in \N$ according to (\ref{directlimit}) and (\ref{linegraph}). First, consider the natural pull-back $\tilde{F}_n \subset G_{n+1}$ of $F_n$ under $f_n$. More precisely, by (\ref{linegraph}), the preimage $f_n^{-1}(F_n) \subset G_{n+1}$ is isomorphic to a subgraph of $F_n$. Let $\tilde{F}_n$ be an isomorphic copy of $F_n$ on the vertex set $f^{-1}_n(V(F_n))$ obtained by adding all edges missing from $f_n^{-1}(F_n)$ so that they are disjoint from $G_{n+1}$.

For every component $C_v$ of the topological subspace $H_n \setminus F_n$ (which by (\ref{linegraph})(a) and (b) will be a vertex $v$ of $G_n$ incident with finitely many half-open edges), consider the subcontinuum $K_v=\closure{f_n^{-1}(C_v)} \subset G_{n+1}$. Then $K_v$ is a finite connected graph. For each $v$, consider 
$K_v^{\circledast}, \; \text{ and } \; L_v =L(K_v) \subset K_v^{\circledast}$, 
and define $F_{n+1}$ to be the induced subgraph $F_{n+1} = \tilde{F}_n \cup \bigcup \{ L_v : v \in V(G_n)\}$.

{\bf Claim 1: }{\it $F_{n+1}$ is a connected graph.}

By induction on $n$. If $F_n$ is connected, then so is its isomorphic copy $\tilde{F}_n$. As line graphs of connected graphs, every $L_v$ is connected. Since by construction, every $L_v$ is connected via an induced edge to $\tilde{F}_n$, it follows that $F_{n+1}$ is connected.

{\bf Claim 2: }{\it Property (\ref{linegraph}) holds for $F_{n+1}$ and $G_{n+1}$.}

(a) No vertex $v$ of $G_{n+1}$ lies on $\tilde{F}_n$, as otherwise $f_n(v)$ would be a vertex of $G_n$ on $F_n$. Also, since all $L_v$ are partial line graphs of $G_{n+1}$, we see that (a) holds at step $n+1$.

(b) Similar.

(c) By construction, this holds for all edges of $\tilde{F}_n$. Further, all edges of $L_v$ are disjoint from $G_{n+1}$, and all edges of $F_{n+1}$ induced $\tilde{F}_n$ and $L_v$ are completely contained in one edge of $G_{n+1}$ be definition.

(d) Let $e=vw$ be an edge of $G_{n+1}$. If $e \notin E(G_n)$ then $F_{n+1} \cap e = L_v \cap e$ is a trivial path consisting of one new vertex. Otherwise, if $e \in E(G_n)$, then by construction and induction assumption, $\tilde{F}_n$ intersects $e$ in a non-trivial path $P \subset \tilde{F}_n$ such that the end-vertices of $P$ have been added only at the previous step. But now, we see that $F_{n+1} \cap G$ is a path $P'$ which is one edge longer on either side than $P$, because we added two edges induced by $L_v$ and $L_w$. In particular, the end vertices of $P'$ are vertices of $L_v$ and $K_v$, and so have only been added at this step.

{\bf Claim 3: }{\it Property (\ref{directlimit}) holds.}

Since $F_n \cong \tilde{F}_n \subset F_{n+1}$ it is clear how to choose the embedding $F_n \hookrightarrow F_{n+1}$. The second part of the claim now follows from (\ref{linegraph})(d) as follows: Every vertex of $F_{n+1} \setminus F_n$ is a vertex of some $L_v$. By construction, any such vertex is connected at most to one of the end vertices on some path $P$, which is, by (\ref{linegraph})(d), a vertex of $F_{n} \setminus F_{n-1}$. 

This completes the recursive construction. As indicated above, the graphs $F_0 \hookrightarrow F_1 \hookrightarrow F_2 \hookrightarrow \cdots$ give rise to a direct limit, which we call $L$. Let $V(L)=\set{v_k}:{k \in \N}$ be an enumeration of the vertices of $L$, first listing all vertices of $F_0$, then all (remaining) vertices of $F_1$ etc. It is clear that there is an increasing sequence of integers $k_n$ such that $L_n = L[\Set{v_0, \ldots, v_{k_n}}]=F_n$.

{\bf Claim 4: }{\it $L$ is a locally finite connected graph.}

To see that $L$ is locally finite, note that any vertex $v \in L$ is contained in some $F_n$ for some $n$, and then  (\ref{directlimit}) implies that $deg_L(v) = deg_{F_{n+1}}(v) < \infty$. And since every $L_n$ is connected, so is $L$.

{\bf Claim 5: }{\it There are isomorphisms $\varphi_n \colon H_n \to L^n$.}
It suffices to show that $L^n \cong F_{n+1} / \set{L_v}:{v \in V(G_n)}$. Indeed, (\ref{directlimit}) implies that the connected components of $L \setminus F_n$ correspond bijectively to the connected components of $F_{n+1} \setminus F_n$, which are, by construction, precisely the $L_v$ indexed by the different $v \in V(G_n)$. In particular, $\varphi_n$ is a bijection between $V(G_n)$ and the dummy vertices of $L^n$ that commutes with the respective bonding maps, i.e.\ 
$$(\dagger) \quad \quad g_{n-1} \circ \varphi_n(v) \restriction V(G_n) = \varphi_{n-1} \circ f_{n-1}(v) \restriction V(G_n).$$

{\bf Claim 6: }{\it For $T_n=\varphi_n(G_n) \subset L^N$ the subgraph of $L^n$ which is the image of 1-complex $G_n \subset H_n$ subdivided by the vertices of $L_n$, satisfies \ref{subsystem}--\ref{commute}.} Everything is essentially set up by construction; \ref{commute} follows by $(\dagger)$ with $h_n = \varphi_n \restriction G_n$.
\end{proof}

Note that our embedding of $X$ into $\gamma L$ has the property that every vertex of the graph-like continuum $X$ is represented by a compactification point (an end) of $\gamma L$. By exercising some extra care in the above construction, one could arrange for isolated vertices of $V(X)$ to be mapped to vertices of $L$.


\subsubsection*{Remark} Theorem~\ref{standardsubspacetheorem} has the following notable consequence. 
Diestel asked in \cite{dqu} whether every connected subspace of the Freudenthal compactification of a locally finite graph is automatically arc-connected. In 2007, Georgakopoulos gave a negative answer, \cite{agelosconnected}. 
However, the analogous problem for arbitrary continua is a  well-studied problem. Indeed, a continuum is said to be \emph{in class A} if every connected subset is arc-wise connected. Continua in class A have been characterized by Tymchatyn in 1976, \cite{tymchatyn}. Even earlier,  in 1933, Whyburn  gave an example of a completely regular continuum which is not in class A, \cite[Example~4]{Whyburn}. Applying Theorem~\ref{standardsubspacetheorem}, Whyburn's example shows at once that Freudenthal compactifications of locally finite graphs are not necessarily in class A.

\section{Eulerian graph-like continua}\label{EulerPf}

\subsection{Characterizing Eulerian Graph-Like Continua}\label{pfthmab}

We now prove the equivalence of (i), (ii), (iii) and (iv) of Theorem~(B) in the case of \emph{closed} paths, and then deduce the same equivalences in  Theorem~(B) for \emph{open} paths. 
To start note that (iv) $\Rightarrow$ (iii) and (i) $\Rightarrow$ (iii) of Theorem~(B) follow from Lemma~\ref{lemma:evencuts}. The next lemma takes care of  (iii) $\Rightarrow$ (iv).

\begin{lemma}
\label{lemma5}
A graph-like continuum such that every topological cut is even can be decomposed into edge-disjoint topological cycles.
\end{lemma}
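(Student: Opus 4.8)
The plan is to exploit the inverse limit representation of Theorem~\ref{inverselimit}. Write $X$ as an inverse limit $\varprojlim(G_n,f_n)$ of finite connected multi-graphs with onto monotone simplicial bonding maps $f_n\colon G_{n+1}\to G_n$ having non-trivial fibres at vertices only, and with each multi-cut $\mathcal{U}_n$ partitioning $V(X)$ into regions (the \emph{moreover} part). By hypothesis every cut of $X$ is even, so Proposition~\ref{prop:invlimhyp}(2) makes every vertex of every $G_n$ even. Thus each $G_n$ is a finite connected even multi-graph, and the classical finite theorem decomposes each $E(G_n)$ into edge-disjoint simple cycles. The whole difficulty is to choose these decompositions \emph{coherently} along the inverse system, so that they assemble into topological circles in $X$.

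A useful bookkeeping observation: identifying $G_n$ with $G(\mathcal{U}_n)$, the edge set $E(G_n)$ is exactly the set of edges of $X$ whose two end-vertices are already separated by $\mathcal{U}_n$. Hence $E(G_n)\subseteq E(G_{n+1})$, the union is all of $E(X)$, and $f_n$ fixes every edge of $E(G_n)$ while collapsing the new edges of $E(G_{n+1})\setminus E(G_n)$ (which lie inside the refined blocks) to vertices. I would therefore build by induction a partition $\mathcal{D}_n$ of $E(G_n)$ into simple cycles so that $f_n$ carries $\mathcal{D}_{n+1}$ onto $\mathcal{D}_n$: each cycle of $\mathcal{D}_{n+1}$ is either collapsed to a point or mapped \emph{monotonically} onto a single cycle of $\mathcal{D}_n$, and every cycle of $\mathcal{D}_n$ is hit at least once.

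The inductive step is a finite \emph{routing} problem localised at each vertex. Fix a vertex $U$ of $G_n$; the cycles of $\mathcal{D}_n$ through $U$ pair up the edges of $E(G_n)$ at $U$ (a simple cycle visits $U$ at most once, using one in-edge and one out-edge). Refining $U$ into its parts in $\mathcal{U}_{n+1}$, these external edges attach to specific parts, and the new edges inside $U$ form a multi-graph $B_U$ on those parts; since $X[U]$ is a region, Proposition~\ref{prop:invlimhyp}(1) shows $B_U$ is \emph{connected}. Adjoin one ``virtual'' edge joining the attachment-parts of each paired couple, obtaining $B_U^{+}$. Counting degrees gives $\deg_{B_U^{+}}(a)=\deg_{G_{n+1}}(a)$ for every part $a$ (each external edge at $a$ contributes exactly one virtual end there), so $B_U^{+}$ is \emph{even}; being also connected, it is Eulerian. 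The aim is to realise each pair by an internal path in $B_U$, the paths pairwise edge-disjoint; the leftover internal edges then have even degree at every part (removing a path changes a part's degree by the number of incident virtual edges, matching the parity forced by evenness of $B_U^{+}$), hence decompose into cycles. Routing one pair is immediate from connectivity of $B_U$; the delicate point, and the \textbf{main obstacle}, is to route all pairs simultaneously and edge-disjointly so that no internal route links the attachment-parts of two \emph{different} $\mathcal{D}_n$-cycles, since such a route would \emph{merge} them and the resulting thread would acquire a point of order $>2$, failing to be a circle. I expect to resolve this using connectivity of $B_U$ together with the even parity, routing pair by pair while checking that each residual graph stays even so that the remaining pairs can still be connected inside $B_U$; equivalently, the routes can be read off from a suitable Eulerian structure on $B_U^{+}$. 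Splicing the internal routes inside every block together with the external edges yields the refined simple cycles of $\mathcal{D}_{n+1}$, each mapping monotonically onto a cycle of $\mathcal{D}_n$, plus the leftover internal cycles that $f_n$ collapses.

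To finish, I would assemble the circles. For each edge $e\in E(X)$, let $n_0$ be the first level at which $e$ is separated; for $n\ge n_0$ the cycle of $\mathcal{D}_n$ containing $e$ maps onto that of $\mathcal{D}_{n-1}$ containing $e$, so these cycles form a \emph{thread}. Its inverse limit is a monotone inverse limit of simple cycles, in which every point has order $2$; by the order characterisation of the circle (cf.\ Theorem~\ref{chargraphlike}) this thread limit is a topological circle $C_e$ containing $e$. Since each $\mathcal{D}_n$ is an edge-partition, the distinct circles arising this way are pairwise edge-disjoint and every edge lies on exactly one; this is the required decomposition of $X$ into edge-disjoint topological cycles, consistent with Lemma~\ref{lemma:evencuts}.
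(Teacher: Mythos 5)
Your overall architecture (inverse limit of finite even graphs, coherent cycle decompositions $\mathcal{D}_n$, threads whose monotone inverse limits are circles by Capel's theorem) is a genuinely different route from the paper's, but it has a real gap exactly at the point you flag as the ``main obstacle'', and that obstacle cannot be overcome by parity plus connectivity alone. The simultaneous routing problem is an edge-disjoint paths problem, and evenness of $B_U^{+}$ (the Euler condition) does not imply solvability; one also needs a cut condition, which can fail in this setting. Concretely: let $G_n$ have two vertices $U,w$ joined by six parallel edges $e_1,\dots,e_6$, and let $G_{n+1}$ have vertices $a,c,d,b,w$, internal edges forming the path $a\hbox{--}c\hbox{--}d\hbox{--}b$, with $e_1,e_3,e_5$ attached to $a$ and $e_2,e_4,e_6$ attached to $b$. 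All degrees are even ($a$ and $b$ have degree $4$, $c,d$ degree $2$, $w$ degree $6$), and the bonding map contracting $\{a,c,d,b\}$ to $U$ is onto, monotone, simplicial with non-trivial fibre only at $U$, so this is a legitimate piece of an inverse system as in Theorem~\ref{inverselimit}. If you happen to choose $\mathcal{D}_n=\{e_1e_2,\,e_3e_4,\,e_5e_6\}$, then refining it requires three pairwise edge-disjoint $a$--$b$ paths inside the path $a\hbox{--}c\hbox{--}d\hbox{--}b$; but the cut $\{a,c\}$ versus $\{d,b\}$ is crossed by only one internal edge, while each of the three paths must cross it, so no $\mathcal{D}_{n+1}$ exists --- even though $B_U^{+}$ is even and connected. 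Thus the induction can get stuck: the choice of $\mathcal{D}_n$ needs foresight about all later levels (here $\{e_1e_3,\,e_2e_5,\,e_4e_6\}$ would have worked). Nor can you recover by projecting decompositions downward and invoking K\"onig's lemma, because the image under $f_n$ of a simple cycle of $G_{n+1}$ is in general only a closed trail (a cycle may enter and leave a block several times), not a simple cycle, so level-$(n+1)$ decompositions do not induce level-$n$ ones of the required kind. Your fallback suggestion of reading the routes off an Euler tour of $B_U^{+}$ also fails for a structural reason: cutting an Euler tour at the virtual edges yields trails joining endpoints of \emph{different} virtual edges, not the two attachment-parts of the same pair.

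The paper sidesteps all of this with a much more elementary, greedy argument in the style of Nash-Williams, making no use of the inverse limit representation for this lemma. Enumerate $E(X)=\{e_0,e_1,\dots\}$; having chosen edge-disjoint circles $C_0,\dots,C_{k-1}$, take the first uncovered edge $e_i=x_iy_i$ and observe that $x_i$ and $y_i$ are joined by an arc in $X$ avoiding $e_i$ and all edges of $C_0,\dots,C_{k-1}$: otherwise there would be a separation whose cut, viewed in $X$, consists of $e_i$ together with some edges of the $C_j$, and since each circle meets every cut in an even number of edges (Lemma~\ref{lemma:evencuts}), that cut of $X$ would be odd, a contradiction; arc-connectedness of graph-like continua (Corollary~\ref{souslian}) then supplies the arc, which together with $e_i$ forms the next circle. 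This builds the circles one at a time, with no coherence conditions across an inverse system, which is precisely what makes the hypothesis ``every cut is even'' do all the work. If you want to rescue your approach, the missing ingredient is a proof that decompositions extending to \emph{all} levels exist, and that is essentially as hard as the lemma itself.
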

\begin{proof}
We adapt the proof from \cite{NW} as follows. Let $G$ be a graph-like continuum, and $E(G)=\{e_0,e_1,\ldots\}$ an enumeration of its edges. Note that $G-e_0$ is not disconnected: If $G-e_0 = A \oplus B$ then $(A,B)$ would be a separation in $G$ with $E(A,B) =\singleton{e}$, so odd, a contradiction. Since $G$ is arc-connected by Corollary~\ref{souslian}, there is an arc in $G-e_0$ connecting $x$ and $y$. Together with $e_0$ that gives a topological circle $C_0$.

Now let $e_i=x_iy_i$ be the first edge not on $C_0$. We claim that there is a path connecting $x_i$ to $y_i$ in $G \setminus (E(C_0)\cup \{e_i\})$. Otherwise, there is a cut $(A,B)$ of $G'=G \setminus E(C_0)$ such that $E_{G'}(A,B)=\{e_i\}$. But then the same cut viewed in $G$ would be odd by Lemma \ref{lemma:evencuts}. A contradiction.

It is clear that we can continue in this fashion until all edges are covered.
\end{proof}

To establish the equivalence of clauses (i), (iii) and (iv) of Theorem~(B), it remains to show (iii) implies (i), which is established by  the next result.

\begin{prop}
\label{theorem16}
Let $X$ be a graph-like continuum.
If all topological cuts of $X$ have even size then $X$ has an Eulerian loop.
\end{prop}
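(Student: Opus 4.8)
The plan is to exploit the inverse limit representation of Theorem~\ref{inverselimit} together with the classical theorem of Euler for finite graphs, and then to lift a compatible family of Eulerian circuits to an honest Eulerian loop on $X$. First I would invoke Theorem~\ref{inverselimit} to write $X \cong \varprojlim G_n$, where the $G_n$ are finite connected multi-graphs and the bonding maps $f_n \colon G_{n+1} \to G_n$ are onto and simplicial with non-trivial fibres only at vertices. Since $X$ is a connected continuum, property~(\ref{connectedness}) makes every $G_n$ connected; since every topological cut of $X$ is even, property~(\ref{item:evencuts}) makes every vertex of every $G_n$ even. Hence each $G_n$ carries an Eulerian circuit, so the set $\mathcal{E}_n$ of Eulerian circuits of $G_n$ is finite and non-empty.

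The combinatorial key is that the bonding maps respect edges rigidly: because fibres are non-trivial only at vertices, each edge of $G_n$ has a unique preimage edge in $G_{n+1}$ (mapped homeomorphically), while every other edge of $G_{n+1}$ collapses to a vertex of $G_n$. Consequently, projecting an Eulerian circuit $C$ of $G_{n+1}$ through $f_n$ traverses each edge of $G_n$ exactly once and collapses the new edges to stationary moves inside single vertices; the result is again a single closed trail covering all edges, i.e.\ an Eulerian circuit $\pi_n(C)$ of $G_n$. Thus $\pi_n \colon \mathcal{E}_{n+1} \to \mathcal{E}_n$ is well defined, and since an inverse sequence of non-empty finite sets has non-empty inverse limit, I obtain a coherent family $(C_n)_n$ with $\pi_n(C_{n+1}) = C_n$.

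It remains to convert $(C_n)_n$ into a genuine loop. I would fix an edge $e_\ast$ present at every stage and cut each circuit at $e_\ast$; coherence then yields a single linear order on $E(X) = \bigcup_n E(G_n)$ in which the edges newly appearing at stage $n+1$ are inserted into the vertex gaps of stage $n$. Realize this order by pairwise disjoint open subintervals $\{I_e\}_{e \in E(X)}$ of $[0,1]$ with dense union, chosen coherently so that each new edge-interval sits inside the gap interval of the vertex it belongs to. Define $\ell_n \colon [0,1] \to G_n$ to traverse the edge $e$ homeomorphically on $I_e$ for $e \in E(G_n)$ and to rest at the corresponding vertex on every gap; parametrising each edge once and for all at the stage it first appears (using that $f_n$ is a homeomorphism on such an edge) makes $\ell_n$ continuous and, crucially, forces $f_n \circ \ell_{n+1} = \ell_n$ \emph{exactly}: on old edge-intervals the two sides agree via the homeomorphic bonding map, on new edge-intervals $f_n$ collapses the edge to precisely the vertex at which $\ell_n$ is already resting, and on gaps both are constant at matching vertices. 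The universal property of the inverse limit then produces a continuous $\ell \colon [0,1] \to X$ with $\pi_n \circ \ell = \ell_n$ for all $n$.

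Finally I would check that $\ell$ is an Eulerian loop. On each $I_e$ the map $\ell$ is a homeomorphism onto the open edge $e$ of $X$ (visible after the stage at which $e$ appears, where all subsequent bonding maps are homeomorphisms on $e$), so $\ell$ is injective on edge interiors and its image meets every edge. Off $\bigcup_e I_e$ every coordinate $\ell_n$ sits at a vertex, so $\ell^{-1}(V) = [0,1] \setminus \bigcup_e I_e$ is the complement of a dense open set, hence closed and nowhere dense; and cutting at $e_\ast$ arranges $\ell(0) = \ell(1) \in V$. Thus $\ell$ is a closed standard path whose image contains every edge, i.e.\ an Eulerian loop. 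The main obstacle is the lifting in the last two paragraphs: one must set up the interval realization coherently enough that the finite-stage loops commute \emph{exactly} with the bonding maps, so that continuity of the limit loop comes for free from the universal property rather than having to be extracted from an approximation argument. The combinatorial fact that projected Eulerian circuits remain Eulerian is exactly what makes this timing consistent across all stages.
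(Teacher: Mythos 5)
Your proposal is correct, and its first two thirds coincide exactly with the paper's proof: both write $X = \lim_{\leftarrow} G_n$ via Theorem~\ref{inverselimit}, so that each $G_n$ is a finite connected multi-graph with all vertices even, hence closed Eulerian; both observe that, because the bonding maps have non-trivial fibres only at vertices, they induce well-defined maps between the finite non-empty sets $\mathcal{E}_n$ of Eulerian circuits (collapsed edges become stationary moves, and each edge of $G_n$ has a unique preimage edge); and both extract a compatible family $(C_n)_n$ from the non-empty inverse limit $\lim_{\leftarrow}\mathcal{E}_n$. Where you genuinely diverge is the lifting step. The paper parametrizes each $C_n$ by an arbitrary Eulerian loop $\phi_n\colon S^1 \to G_n$, notes that compatibility yields \emph{monotone} maps $g_n\colon S^1\to S^1$ with $f_n\circ \phi_{n+1} = \phi_n\circ g_n$, and then quotes Capel's theorem that an inverse limit of circles under monotone bonding maps is again a circle, so that the induced map between the inverse limits is the desired Eulerian loop. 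You instead rigidify the parametrization once and for all: you realize the limit linear order on $E(X)$ by disjoint open subintervals of $[0,1]$ with dense union (a fact the paper itself uses in its Examples section), and parametrize every stage on this common time schedule so that the squares commute \emph{exactly}, whence the limit loop exists by the universal property of the inverse limit, with no appeal to Capel. The step you rightly single out as the crux does hold: between two consecutive old edges the circuit $C_{n+1}$ traverses only collapsed edges, and since a collapsed edge has both endpoints mapping to the same vertex of $G_n$, a walk cannot move between distinct vertices of $G_n$ this way; hence each new edge collapses precisely to the vertex where $\ell_n$ is resting, and likewise the gap vertices match. The trade-off is clear: your route is more elementary and self-contained, and it exhibits the `shape' (the linear order) of the resulting Eulerian loop explicitly, while the paper's route is shorter at the price of outsourcing the topological heart of the lifting to a nontrivial cited theorem on inverse limits of circles.
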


\begin{proof}
By Theorem~\ref{inverselimit} (\ref{item:evencuts}), $X$ can be written as an inverse limit of graphs $G_n$, which are all closed Eulerian. Let $f_n$ denote the bonding map $f_n \colon G_{n+1}\to G_n$.

For each $n$, let $\mathcal{E}_n$ be the collection of all Euler cycles of $G_n$. Since $G_n$ is finite, so is $\mathcal{E}_n$. For each $n\in\mathbb{N}$, let $\hat{f}_n \colon \mathcal{E}_{n+1}\to \mathcal{E}_n$ be the map induced by $f_n$. That is, if $E=(v_0e_0v_1e_1v_2e_2\cdots v_ke_kv_0)$, then \[\hat{f}_n(E)=\p{f_n(v_0)f_n(e_0)f_n(v_1)f_n(e_1)\cdots f_n(e_k)f_n(v_0)}.\] 
Observe that from the proof of Theorem \ref{inverselimit} some of the edges in $E$ get contracted to a vertex. So $\hat{f}_n(E)$ is an Eulerian circuit in $G_n$. Now, $\{\mathcal{E}_n,\hat{f}_n\}_{n\in\mathbb{N}}$ forms an inverse system, and since each $\mathcal{E}_n$ is compact, we see  $\lim_{\leftarrow}\mathcal{E}_n\neq\emptyset$. 

Let $(E_n)\in\lim_{\leftarrow}\mathcal{E}_n$. For each $n \in \N$, fix an Eulerian loop $\phi_n\colon S^1\to G_n$ following the pattern given by $E_n$. 
Now observe that since the $(E_n)_{n \in \N}$ are compatible, there are monotone continuous maps $g_n \colon S^1 \to S^1$ ($n \in \N$) such that the diagram 
\begin{center}
\begin{tabular}{ccccccc}
$G_1$ & $\xleftarrow{f_1}$ & $G_2$ & $\xleftarrow{f_2}$ & $G_3$ & $\xleftarrow{f_3}$ & $\cdots$ \\
$\uparrow\phi_1$ &    & $\uparrow\phi_2$ &  & $\uparrow\phi_3$ &   &  \\ 
$S^1$ & $\xleftarrow{g_1}$ & $S^1$ & $\xleftarrow{g_2}$ & $S^1$ & $\xleftarrow{g_3}$ & $\cdots$
\end{tabular}
\end{center}
commutes. As an inverse limit of circles under monotone bonding maps, we have $\lim\limits_{\longleftarrow}S^1\cong S^1$, \cite[4.11]{Chapel}, and so the map $$g\colon\lim\limits_{\longleftarrow}S^1\to \lim\limits_{\longleftarrow}G_n, \; (x_n)_{n \in \N} \mapsto (\phi_n(x_n))_{n \in \N}$$
is our desired Eulerian loop.
\end{proof}

The proof of the equivalence of (i), (ii), (iii) and (iv) in Theorem~(B), for \emph{closed} loops, is completed by Lemma~\ref{prop:eveniffeven} showing the equivalence of (ii) and (iii). A preliminary lemma is needed.

\begin{lemma}
Let $X$ be a graph-like continuum, $(A,B)$ be a separation of $V,$ and $\mathcal{U}=\{A_1, \dots , A_n\}$ be a multi-cut of $A$. If the cut $E(A,B)$ is odd, then $E(A_j, V\setminus A_j)$ is odd for some $1\leq j \leq n$.
\label{prop:sequenceoddcuts}
\end{lemma}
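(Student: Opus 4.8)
The plan is to reduce the statement to the handshake lemma for finite multi-graphs, using the multi-cut machinery already set up. First I would observe that since $(A,B)$ is a separation of $V(X)$ and $\mathcal{U}=\{A_1,\dots,A_n\}$ partitions $A$ into clopen sets, the collection $\mathcal{U}'=\{A_1,\dots,A_n,B\}$ is a multi-cut of all of $V(X)$: its members are pairwise disjoint, clopen in $V(X)$, and cover $V(X)$. By Lemma~\ref{finitecuts} every cut of $X$ is finite, so each of the quantities $|E(A_j,V\setminus A_j)|$ and $|E(A,B)|$ is a well-defined finite cardinal; in particular $(A_j,V\setminus A_j)$ really is a separation, since $A_j$ and $V\setminus A_j$ are both clopen in $V(X)$.

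Next I would form the associated finite multi-graph $G(\mathcal{U}')$. It has $n+1$ vertices, namely the contracted pieces $a_1,\dots,a_n$ coming from $A_1,\dots,A_n$ and $b$ coming from $B$. As recorded in the remark following the definition of $G(\mathcal{U})$, the degree in $G(\mathcal{U}')$ of the vertex coming from a part $U$ equals $|E(U,V\setminus U)|$. Hence $\deg_{G(\mathcal{U}')}(a_j)=|E(A_j,V\setminus A_j)|$ for each $j$, while $\deg_{G(\mathcal{U}')}(b)=|E(B,V\setminus B)|=|E(A,B)|$, using that $V\setminus B=A$. The key step is then the handshake lemma: in any finite multi-graph the sum of the vertex degrees is twice the number of edges, hence even. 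Applying this to $G(\mathcal{U}')$ gives
\[
\sum_{j=1}^{n}\bigl|E(A_j,V\setminus A_j)\bigr| \;+\; \bigl|E(A,B)\bigr| \;=\; 2\,\bigl|E(G(\mathcal{U}'))\bigr| \;\equiv\; 0 \pmod 2 .
\]
Equivalently, one can see this by a direct double count: every edge of $X$ joining two distinct members of $\mathcal{U}'$ is incident to exactly two of the parts, so is counted exactly twice in the left-hand sum, whereas edges with both endpoints in a single part (including any loop edges) contribute to none of these cuts. Since $|E(A,B)|$ is assumed odd, it follows that $\sum_{j=1}^{n}|E(A_j,V\setminus A_j)|$ is odd, and therefore at least one summand $|E(A_j,V\setminus A_j)|$ must be odd, which is exactly the desired conclusion.

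I do not expect a serious obstacle here; the only points that need care are bookkeeping ones. Specifically, I would confirm that $\mathcal{U}'$ is a genuine multi-cut of $V(X)$, that all the relevant cut cardinalities are finite (Lemma~\ref{finitecuts}) so that the parities are meaningful, and that edges lying inside a single part of $\mathcal{U}'$ do not contribute to any of the cuts being summed. Once these are in place, the parity argument is immediate, and no appeal to connectedness, the inverse-limit representation, or the topological notions of even and odd vertices is required.
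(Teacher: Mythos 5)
Your proposal is correct and follows essentially the same route as the paper: contract the multi-cut $\{A_1,\dots,A_n,B\}$ to a finite multi-graph, identify the vertex degrees with the cut sizes, and apply the Handshaking Lemma (the paper phrases it as "the number of odd-degree vertices is even," you as "the degree sum is even," which are equivalent). Your extra bookkeeping checks (finiteness of cuts via Lemma~\ref{finitecuts}, edges internal to a part contributing to no cut) are sound and merely make explicit what the paper leaves implicit.
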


\begin{proof}
Consider the contraction graph induced by the multi-cut $(B,A_1,\ldots,A_n)$.

By assumption, the vertex $\Set{B}$ has odd degree. Since by the Handshaking Lemma, the number of odd-degree vertices in a finite graph is even, there must be some further vertex $\Set{A_j}$ with odd degree, so $E(A_j, V\setminus A_j)$ is odd.
\end{proof}

\begin{lemma}\label{lem_even}
Let $X$ be a graph-like continuum. All topological cuts of $X$ are even if and only if every vertex of $X$ is even.
\label{prop:eveniffeven}
\end{lemma}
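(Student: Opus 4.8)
The statement splits into two implications, and my plan treats them asymmetrically: one is immediate, the other needs a shrinking-clopen-sets argument.

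For the implication ``all cuts even $\Rightarrow$ every vertex even'', I would take the witnessing clopen set in the definition of evenness to be all of $V(X)$. Fix a vertex $v$ and put $A = V(X)$. For any clopen $C$ with $v \in C \subseteq V(X)$, the pair $(C, V(X)\setminus C)$ is a separation, so $E(C, V(X)\setminus C)$ is a cut and therefore even by hypothesis (the degenerate case $C = V(X)$ gives the empty cut, which is also even). Hence $A = V(X)$ witnesses that $v$ is even.

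For the converse I would argue contrapositively: assuming some cut $E(A,B)$ is odd, I will exhibit a vertex that is not even. The key tool is Lemma~\ref{prop:sequenceoddcuts}, together with the fact that $V(X)$ is a compact, zero-dimensional metric space, so every clopen set partitions into finitely many clopen pieces of arbitrarily small diameter. Starting from the clopen set $C_0 = A$ with $E(C_0, V(X)\setminus C_0)$ odd, I construct recursively a nested sequence of clopen sets $C_0 \supseteq C_1 \supseteq C_2 \supseteq \cdots$ with $\diam{C_k} < 1/k$ and each cut $E(C_k, V(X)\setminus C_k)$ odd. At stage $k$, partition $C_k$ into finitely many clopen pieces of diameter less than $1/(k+1)$; this is a multi-cut of $C_k$, so applying Lemma~\ref{prop:sequenceoddcuts} to the separation $(C_k, V(X)\setminus C_k)$ yields one piece $C_{k+1}$ whose cut $E(C_{k+1}, V(X)\setminus C_{k+1})$ is odd. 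Each $C_{k+1}$ is nonempty because an odd cut contains at least one edge, hence has an endpoint in $C_{k+1}$.

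Since the $C_k$ are nonempty, nested and compact with $\diam{C_k} \to 0$, their intersection is a single vertex $v \in V(X)$. To see that $v$ is not even, take any clopen $A' \ni v$; because $A'$ is open, $v \in C_k$ for all $k$, and $\diam{C_k} \to 0$, we get $C_k \subseteq A'$ for all sufficiently large $k$, and such a $C_k$ satisfies $v \in C_k \subseteq A'$ with $E(C_k, V(X)\setminus C_k)$ odd. Thus no clopen neighborhood of $v$ can witness evenness, so $v$ is not even, contradicting the hypothesis. I expect the main obstacle to be exactly this shrinking construction: one must arrange both that the odd-cut property is inherited at every stage (which is precisely what Lemma~\ref{prop:sequenceoddcuts} guarantees) and that the diameters shrink to zero so the limit is a single vertex with arbitrarily small odd-cut neighborhoods; the interplay of these two requirements, resting on zero-dimensionality of $V(X)$ and finiteness of cuts, is the crux of the argument.
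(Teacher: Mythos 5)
Your proposal is correct and matches the paper's own proof essentially step for step: the forward direction is read off from the definition (taking $A = V(X)$ as the witness), and the converse is proved contrapositively by iterating Lemma~\ref{prop:sequenceoddcuts} on successively finer clopen partitions to produce a nested sequence of odd-cut clopen sets with diameters tending to zero, whose intersection is a single vertex that fails to be even. The only differences are cosmetic (diameter bound $1/(k+1)$ versus halving, and your explicit justifications of nonemptiness and of $C_k \subseteq A'$ for large $k$, which the paper leaves implicit).
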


\begin{proof}
If all cuts are even, then from the definition every vertex is even. We prove the converse by contrapositive. Assume there exists a separation $(A_0,B_0)$ of $V$ such that $E(A_0,B_0)$ is odd. Let $\mathcal{U}_0=\{U_{1_0}, \dots , U_{n_0}\}$ be a separation of $A_0$ into sets with diameter $<\frac{1}{2} diam(A_0)$. By Lemma~\ref{prop:sequenceoddcuts} there exists $1\leq j_0\leq n_0$ such that $E(U_{j_0}, V\setminus U_{j_0})$ is odd. Denote $U_{j_0}$ by $A_1$ and $V\setminus U_{j_0}$ by $B_1$. Let $\mathcal{U}_1=\{U_{1_1}, \dots , U_{n_1}\}$ be a separation of $A_1$ into sets with diameter $<\frac{1}{2} diam(A_1)$. Again by Lemma~\ref{prop:sequenceoddcuts} there exists $1\leq j_1\leq n_1$ such that $E(U_{j_1}, V\setminus U_{j_1})$ is odd. Denote $U_{j_1}$ by $A_2$ and $V\setminus U_{j_1}$ by $B_2$. Continuing with this procedure we obtain a nested sequence of nonempty cut elements $\{A_i\}_{i\in\N}$. By construction $\bigcap_{i\in\N}A_i=\{v\}\in V$ and $E(A_i,B_i)$ is odd for every $i\in\N$, hence $v$ is not even.
\end{proof}

It remains to deduce the equivalence of (i), (ii), (iii) and (iv) in  Theorem~(B) for the case of \emph{open} paths from that of \emph{closed} paths. This can be achieved with a simple trick.

Suppose, to start, that item (i) for open paths of Theorem~(B), holds for a graph-like continuum $X$. So in $X$ there is an  open Eulerian path starting at a 
vertex $v$ and ending at another vertex $w$. Create a new graph-like continuum $Z$ by adding one edge to $X$ with endpoints at $v$ and $w$. Then $Z$ is a graph-like continuum  with an Eulerian loop.
So, by Theorem~(B) applied to $Z$, each of (ii)-(iv) (for closed paths) of that theorem hold for $Z$. But now it easily follows from the definitions that each of (ii)-(iv) (for open paths) of Theorem~(B) hold for $X$.

Now let $X$ be a graph-like continuum for which one of items (ii)-(iv) for open paths in Theorem~(B) holds. To complete the deduction we show (i) holds for open paths. Each of these items highlights two distinct vertices (the two odd vertices in (ii) and the ends of the arc in (iv)). Call them $v$ and $w$.
 Create a new graph-like continuum $Z$ by adding one edge to $X$ with endpoints at $v$ and $w$. Then $Z$ is a graph-like continuum and it is easily verified from the definitions that it satisfies one of (ii)-(iv) for closed paths in Theorem~(B).  Hence (i) for closed paths of Theorem~(B) holds, and there is a closed Eulerian path in $Z$.  Removing the added edge yields an open Eulerian path in $X$.

\subsection{Counting All Eulerian Loops and Paths}\label{eulercnt}

In this section we aim to count the number of distinct Eulerian loops and paths in a given graph-like continuum. 
To do so we must decide what it means for two paths to be equivalent. This is a well-studied problem in combinatorial group theory, and we adopt the approach taken there.
Two maps $f,g \colon I \to X$ are \emph{equivalent} if $v=f(0)=g(0)$, $w=f(1)=g(1)$, $v$ and $w$ are vertices, and $f$ is homotopy equivalent to $g$ relative to $v,w$. As noted in the Introduction, every map $f\colon I \to X$ with vertices for endpoints is equivalent to a standard path.

Let $X$ be a graph-like continuum. 
By Theorem~\ref{inverselimit} (\ref{item:evencuts}), $X$ can be written as an inverse limit of graphs $G_n$, via bonding maps $f_n:G_{n+1}\to G_n$.
As in Proposition~\ref{theorem16}, 
for each $n$, let $\mathcal{E}_n$ be the collection of all Eulerian cycles in $G_n$, and let $\hat{f}_n\colon \mathcal{E}_{n+1}\to \mathcal{E}_n$ be the map induced by $f_n$.  
Recall, $(\mathcal{E}_n,\hat{f}_n)_n$ forms an inverse system, and set  $\mathcal{E} = \mathcal{E}(X)=\lim_{\leftarrow}\mathcal{E}_n$. As in Proposition~\ref{theorem16}, every $(E_n)_n$ in $\mathcal{E}(X)$ gives rise to an Eulerian loop in $X$. 
It is straightforward to check that distinct members of $\mathcal{E}(X)$ gives rise to inequivalent Eulerian loops.
The converse is also true, although we do not need that for our counting result. In 
any case we consider $\mathcal{E}(X)$ to be the \emph{space of Eulerian loops} in $X$.

\begin{theorem}
\label{finiteorcontinuum}
A closed Eulerian graph-like continuum has either finitely many distinct Eulerian loops in which case it is a graph, or it has continuum many Eulerian loops.
\end{theorem}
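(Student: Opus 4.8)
The plan is to analyse the space of Eulerian loops $\mathcal{E}(X)=\lim_{\leftarrow}(\mathcal{E}_n,\hat f_n)$ directly as an inverse limit of finite sets. Being such a limit, $\mathcal{E}(X)$ is a nonempty (as $X$ is closed Eulerian) compact, metrizable, zero-dimensional space, and in particular $|\mathcal{E}(X)|\le\mathfrak{c}$. After replacing each $\mathcal{E}_n$ by its \emph{stable image} $\mathcal{E}_n^\ast=\bigcap_{m\ge n}\hat f_{n,m}(\mathcal{E}_m)$ — which does not change the inverse limit — I may assume every $\hat f_n\colon\mathcal{E}_{n+1}^\ast\to\mathcal{E}_n^\ast$ is onto. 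The whole theorem then rests on the dichotomy: either the maps $\hat f_n$ are eventually bijective, or infinitely many of them fail to be injective.

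First I would dispose of the finite side. If some $N$ has $\hat f_n$ bijective for all $n\ge N$, then $\mathcal{E}(X)\cong\mathcal{E}_N^\ast$ is finite. Conversely, if $X$ is homeomorphic to a finite graph then it has only finitely many Eulerian circuits, so $\mathcal{E}(X)$ is finite. To see that finiteness of $\mathcal{E}(X)$ forces $X$ to be homeomorphic to a finite graph, I would argue contrapositively: if $X$ is not a finite graph then, by the classical order characterization of graphs (a continuum is a graph iff all points have finite order and all but finitely many have order $2$, \cite[Theorem 9.10 \& 9.13]{Nadler}), $X$ has either a point of infinite order or infinitely many points of order $\neq 2$. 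I will show that either situation produces infinitely many independent binary choices among Eulerian loops, so $|\mathcal{E}(X)|=\mathfrak{c}$.

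The heart of the proof is the construction of a Cantor set inside $\mathcal{E}(X)$ whenever infinitely many $\hat f_n$ are non-injective. The mechanism is \emph{reversal of closed sub-walks}: if a closed walk is written $W=A\cdot B$ where $B$ is itself a closed sub-walk based at a repeated vertex, then $A\cdot\overline B$ is again a single closed walk on the same edge set; thus reversing $B$ always yields another Eulerian loop, and reversals performed in \emph{disjoint} regions of $X$ can be applied independently. Crucially, reversal automatically preserves connectedness of the trail, side-stepping the usual coupling of local routing choices that makes Eulerian circuits hard to count. Concretely, a non-injective $\hat f_n$ means some $E_n\in\mathcal{E}_n^\ast$ has two distinct lifts; since $f_n$ has non-trivial fibres only at vertices (Theorem~\ref{inverselimit}), the lifts can differ only in how they route through the finitely many contracted regions, and in at least one such region $X[U]$ the two routings differ by reversing a non-trivial closed sub-walk supported in $X[U]$. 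Since the edges of $X$ form a null sequence (Corollary~\ref{souslian}), the regions witnessing distinct levels of branching shrink, so I can select branchings at a strictly increasing sequence of levels sitting in \emph{pairwise disjoint} regions $R_1,R_2,\dots$, each carrying a reversible closed sub-walk $B_i$. Reversing an arbitrary subset $S\subseteq\N$ of the $B_i$ produces, level by level (only finitely many $B_i$ being visible at each level), a well-defined compatible thread in $\lim_{\leftarrow}\mathcal{E}_n$, and distinct subsets give distinct loops because they traverse region $R_i$ in opposite senses. This defines a continuous injection $\{0,1\}^{\N}\hookrightarrow\mathcal{E}(X)$, whence $|\mathcal{E}(X)|=\mathfrak{c}$.

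The main obstacle is exactly this independence step: translating an abstract non-injective bonding map into a concrete reversible closed sub-walk localized in a small region, and guaranteeing that such branchings recur in cofinally many, pairwise disjoint regions rather than being confined (which would a priori allow only a convergent sequence of loops, so that $\mathcal{E}(X)$ could be countably infinite). Establishing this requires the local description of the bonding maps from Theorem~\ref{inverselimit} — that non-trivial fibres occur only at vertices, so a lift is altered only within contracted regions — together with the standard fact that two Eulerian circuits of a finite graph differ by a sequence of closed-sub-walk reversals, applied inside one such region. Once localization and recurrence are in hand, the disjointness is free from the null-sequence property of the edges, and the Cantor set drops out.
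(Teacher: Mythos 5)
Your framework---the inverse limit $\mathcal{E}(X)=\lim_{\leftarrow}\mathcal{E}_n$, the reduction to surjective bonding maps via stable images, and the observation that eventually bijective bonding maps give a finite limit---is sound, but the core of your argument, manufacturing infinitely many \emph{independent} binary choices out of infinitely many non-injective $\hat f_n$, rests on two claims that are false. First, a non-injective $\hat f_n$ does \emph{not} in general produce a non-trivial closed sub-walk supported in a contracted region: two lifts of $E_n$ differ by re-routing the finitely many trails through a fibre, and these trails can all be simple paths, in which case no closed sub-walk lies in the region at all. The paper's Example~4, $X_Q$, shows this concretely: in a region containing the two poles of one circle, the two routings merely swap which pass of the loop uses the top and which the bottom semicircle; moreover every non-trivial closed sub-walk of \emph{any} Eulerian loop of $X_Q$ contains $(0,0)$ or $(1,0)$, so no reversal whatsoever is supported in a small region away from the two ends of the base segment. (The Abrham--Kotzig-type fact you invoke is about transforming one Euler tour of a whole finite graph into another; it does not say that two lifts differ by reversals \emph{inside} a fibre.) Second, the claim that branchings at different levels can be placed in \emph{pairwise disjoint} regions does not follow from the null-sequence property and is false: in the Hawaiian earring (Example~5, after subdividing loops), the only regions whose induced subspaces contain any edges---hence the only regions that can witness branching---all contain the wild vertex $\mathbf{0}$, so any two of them intersect; they shrink, but they are nested, never disjoint. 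Since, as you note yourself, infinitely many non-injective surjections between finite sets are compatible with the inverse limit being a countable convergent sequence, these two gaps leave the continuum half of your dichotomy unproved; and your finite half ($\mathcal{E}(X)$ finite $\Rightarrow$ $X$ a graph) is deferred to the same unproved machinery.

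The paper sidesteps the need for infinitely many independent branchings altogether. Since $\mathcal{E}(X)$ is a compact subspace of a Cantor set, either it has no isolated point---then it is perfect, hence of size continuum once non-empty---or it has an isolated point, and the paper shows that \emph{this} forces $X$ to be a finite graph. Isolation means a single finite level $G_n$ determines the loop $f$ uniquely; if some part $X[U_i]$ of the corresponding multi-cut were not a graph, two of the finitely many arcs into which $f$ decomposes $X[U_i]$ would share two interior points $y\neq z$, and exchanging the segments of those two arcs between $y$ and $z$ (a \emph{swap} of parallel segments, not a reversal---exactly the move your mechanism cannot see) yields a second Eulerian loop with the same projection to $G_n$, a contradiction. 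Only one local modification is ever needed, which is why the paper's proof closes where yours cannot. To salvage your route you would have to replace reversals by general re-routings of fibre trails and prove a genuine recurrence-plus-independence statement for them; the isolated-point dichotomy is the cleaner path.
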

\begin{proof}
Since every $\script{E}(G_n)$ is finite discrete, the inverse limit is a compact subspace of a Cantor set. As compact subspaces of a Cantor set without isolated points have size continuum, the result follows from the next claim.

{\bf Claim:} If $\script{E}(X)$ contains an isolated point, then $X$ is homeomorphic to a graph.

Fix an isolated element $(E_n)_n$ in $\mathcal{E}(X)$. Fix an Eulerian loop $f \colon I \to X$ of $X$ corresponding to $(E_n)_n$ (as in Proposition~\ref{theorem16}). To witness that $f$ is isolated, find coordinate graph $G_n$ induced by a multi-cut $\script{U}=(U_1,\ldots,U_n)$ of $X$ such that the  the quotient map $q \colon X \to  G$ acting on the set of (distinct) Euler cycles $\script{E}(X) \to \script{E}(G_n)$ satisfies $q^{-1}(q(f)) = \singleton{f}$.
We claim that every $X[U_i]$ (the subspace of $X$ induced by the vertex set $U_i$) is a graph. This would show that  $X$ itself is also a graph.

Without loss of generality,  $f(0) \notin X[U_i]$. The map $f$ induces a linear order on $E(U_i, V \setminus U_i)$, say $(e_0,\ldots,e_{2k-1})$. For all $0 \leq l < 2k$ write $x_l$ for the end vertex of $e_l$ in $U_i$ (of course, the $x_l$ need not be distinct). Let $f_m$ be the arc between $x_{2m}$ and $x_{2m+1}$ induced by $f$. We claim that the arcs $\set{f_m}:{0 \leq m < k}$ witness that $X[U_i]$ is a graph. 

First of all, $X[U_i]=\bigcup_{m < k} f_m$ since $f(0) \notin X[U_i]$ implies $f_m \subseteq  X[U_i]$, and $f$ Eulerian implies that all edges in $E(U_i,U_i)$ are hit. As the edges are dense, all of $X[U_i]$ is covered.

To complete the proof, it remains to show that our arcs intersect pairwise only finitely. Indeed, we claim that $|\mathring{f}_m \cap \mathring{f}_p| \leq 1$. Otherwise, suppose that $y\neq z$ are two vertices lying in the interior of both arcs.
Denote by $e_m=f_m \restriction [y,z]$ and $e_p=f_p \restriction [y,z]$ (or $e_m=f_m \restriction [z,y]$ depending on which vertex comes first). Since $f_m,f_p$ are edge disjoint, $e_m \neq e_p$. Then replace 
\begin{itemize}
\item $f_m$ by $f_m \restriction [x_{2m},y] \cup e_p \cup f_m \restriction [y,x_{2m+1}] $, and 
\item $f_p$ by $f_p \restriction [x_{2p},y] \cup e_m \cup f_p \restriction [y,x_{2p+1}] $.
\end{itemize}
This change gives rise to an Eulerian loop $f'$ of $X$ distinct from $f$, with $q^{-1}(q(f)) \supset \Set{f,f'}$, a contradiction.
\end{proof}

We can deduce the analogous result for the number of open Eulerian paths by the same trick used to derive the open version of Theorem~(B) from the closed version. Let $X$ be an open Eulerian graph-like continuum, and let $v,w$ be the two odd vertices of $X$.   Add an edge connecting  $v$ and $w$, to get a closed Eulerian graph-like continuum $Z$. Apply the preceding result to deduce $Z$ has either finitely many distinct Eulerian loops in which case it is a graph, or it has continuum many Eulerian loops. Removing the added edge yields either that $X$ is a graph or has continuum many open Eulerian paths.
\begin{theorem}
\label{finiteorcontinuumOpen}
An open Eulerian graph-like continuum has either finitely many distinct open Eulerian paths in which case it is a graph, or it has continuum many open Eulerian paths.
\end{theorem}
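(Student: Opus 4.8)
The plan is to reduce the open case to the already-established closed case, Theorem~\ref{finiteorcontinuum}, via the same edge-adding trick used earlier to pass between the open and closed versions of Theorem~(B). Let $X$ be an open Eulerian graph-like continuum. By the equivalence of (i) and (ii) in Theorem~(B), $X$ has exactly two odd vertices; call them $v$ and $w$. First I would form a new space $Z$ by attaching a single fresh edge $e$ with endpoints $v$ and $w$, and verify that $Z$ is again a graph-like continuum: it is compact and metrizable, its vertex set $V$ is unchanged and hence still closed and zero-dimensional, and $Z \setminus V$ is $X \setminus V$ together with one additional copy of $(0,1)$. Since adjoining $e$ flips the parity of precisely $v$ and $w$, every vertex of $Z$ is now even, so by Theorem~(B) applied to $Z$ the space $Z$ is closed Eulerian.

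The heart of the argument is a cardinality-preserving correspondence between the open Eulerian paths of $X$ and the Eulerian loops of $Z$. I would argue that any Eulerian loop of $Z$ traverses the added edge $e$ exactly once: at least once because the loop is Eulerian, and at most once because a standard path is injective on the interior of every edge. Cutting such a loop along its unique passage through $\mathring{e}$ and deleting $e$ yields an open Eulerian path in $X$ from $v$ to $w$; conversely, concatenating an open Eulerian $v$--$w$ path in $X$ with the edge $e$ produces an Eulerian loop of $Z$. I would check that these two operations are mutually inverse and respect the equivalence relation (homotopy rel endpoints), so that they induce a bijection between the equivalence classes of open Eulerian paths of $X$ and the elements of the loop space $\mathcal{E}(Z)$ studied in the previous subsection.

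With the correspondence in hand I would simply invoke Theorem~\ref{finiteorcontinuum} for $Z$. Either $Z$ has continuum many Eulerian loops, whence $X$ has continuum many open Eulerian paths; or $Z$ has only finitely many Eulerian loops, in which case Theorem~\ref{finiteorcontinuum} gives that $Z$ is homeomorphic to a finite graph. In the latter case $X$ is obtained from the finite graph $Z$ by deleting the interior of the single edge $e$, so $X$ is itself homeomorphic to a finite graph, and the bijection shows that $X$ then has only finitely many open Eulerian paths.

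The main obstacle is making the path--loop correspondence rigorous at the level of equivalence classes rather than of individual parametrized maps: I must confirm that two open paths in $X$ are equivalent precisely when their corresponding loops in $Z$ are equivalent, and that no Eulerian loop of $Z$ is lost or double-counted through the choice of where the loop is cut on $e$. Everything else---the verification that $Z$ is graph-like, the parity bookkeeping, and the observation that a finite graph with one open edge removed is still a finite graph---is routine.
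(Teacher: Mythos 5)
Your proposal is correct and takes essentially the same route as the paper: attach a new edge joining the two odd vertices $v$ and $w$, observe that the resulting space $Z$ is a closed Eulerian graph-like continuum, apply Theorem~\ref{finiteorcontinuum} to $Z$, and transfer the finite/continuum dichotomy back to $X$ by removing the added edge. The cut-and-concatenate correspondence between Eulerian loops of $Z$ and open Eulerian paths of $X$, which you rightly flag as the point needing care, is exactly what the paper's proof leaves implicit.
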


\section{Bruhn \& Stein Parity}\label{sec:BruhnStein}

Let $X$ be a graph-like continuum with vertex set $V$. Let $v$ be a vertex of $X$. Then we say that $v$ has \emph{strongly even degree} (respectively, \emph{strongly odd degree}) if there is a clopen neighborhood $C$ of $v$ such that for every clopen neighborhood $A$ of $v$ contained in $C$ the maximal number of edge-disjoint arcs from $V \setminus A$  to $v$ is even (respectively, odd).
By Lemma~\ref{finitecuts}, this is well-defined. We further say that $v$ has \emph{weakly even degree} (resp., \emph{weakly odd degree}) if $v$ does \emph{not} have strongly odd (resp. even) degree.  
Equivalently, $v$ has weakly even degree if $v$ has a neighborhood base of clopen sets, $C$, so that  the maximal number of edge-disjoint arcs from $V \setminus C$ to $v$ is even. And similarly for weakly odd degree.
Bruhn \& Stein \cite{euler} use the same terminology for `strongly odd' and `weakly even' degrees, but use `even' for our `strongly even' and `odd' for our `weakly odd'.

 Note that isolated vertices have finite degree by Lemma~\ref{finitecuts}, so for them being even and having strongly even degree coincide (and similarly for odd).   
In general, our notion of `even' and `odd'  vertices implies those of Bruhn \& Stein. To see this, we shall need a version of Menger's theorem in the edge-disjoint version. That Menger-like theorems hold for graph-like continua is not surprising, and vertex-disjoint versions of Menger have been proved in \cite{thomassenvella}. We complement their results by the following theorem. Note that in finite graph theory, the edge disjoint version follows from the vertex disjoint version by applying the latter theorem to the line graph. As it is unclear, what a line-graph for graph-like spaces should be, we need a different proof.
\begin{theorem}[Menger for Graph-like Continua---Edge Disjoint Version]
\label{thm_menger}
Let $X$ be a graph-like continuum. For disjoint closed sets $A$ and $B$ of vertices of $X$, the maximum number of edge-disjoint $A-B$ paths equals the minimum cut separating $A$ from $B$.
\end{theorem}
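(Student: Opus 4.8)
The plan is to prove the two inequalities separately, with the inverse-limit representation of Theorem~\ref{inverselimit} carrying the hard direction.

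\emph{The routine inequality} ($\max \le \min$). First I would observe that the minimum cut is a well-defined finite number: since $V$ is compact and zero-dimensional, the disjoint compacta $A,B$ can be separated by a clopen set, giving at least one separation $(P,Q)$ with $A\subseteq P$, $B\subseteq Q$, and every cut is finite by Lemma~\ref{finitecuts}. Now fix any such separation with cut $F=E(P,Q)$. Deleting the open edges of $F$ from $X$ leaves exactly $X[P]\sqcup X[Q]$, two disjoint closed sets, since every surviving edge lies inside $P$ or inside $Q$. Hence any $A$--$B$ path, being connected with endpoints on opposite sides, must meet the interior of some edge of $F$; as the paths are edge-disjoint, distinct paths meet distinct edges of $F$, so the number of paths is at most $|F|$. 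Minimizing over all cuts gives $\max\le\min$.

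\emph{The substantial inequality} ($\min \le \max$). Here I would represent the (connected) graph-like continuum $X$ as $\lim_{\leftarrow} G_n$ with onto, monotone, simplicial bonding maps $f_n\colon G_{n+1}\to G_n$ having non-trivial fibres only at vertices, where $G_n=G(\mathcal U_n)$ arises from a refining basis of multi-cuts $\mathcal U_n$; recall that $f_n$ restricts to a bijection between $E(G_n)$ and the non-contracted edges of $G_{n+1}$. Write $A_n, B_n$ for the vertices of $G_n$ meeting $A$, $B$; since $A,B$ are disjoint compacta and the $\mathcal U_n$ refine to points, $A_n\cap B_n=\emptyset$ for all large $n$. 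The key bookkeeping step is a two-way correspondence: every clopen separation of $V(X)$ is a union of $\mathcal U_n$-members for $n$ large, so separations of $A$ from $B$ in $X$ that are coarse enough are precisely the pullbacks of separations of $A_n$ from $B_n$ in $G_n$, with \emph{equal} cut size. This forces the minimum cut $k$ separating $A$ from $B$ in $X$ to equal the minimum $A_n$--$B_n$ cut in $G_n$ for all sufficiently large $n$; after re-indexing I assume this holds for all $n$.

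\emph{Lifting a maximal path system.} Applying the finite edge-disjoint Menger theorem in each $G_n$ yields $k$ pairwise edge-disjoint $A_n$--$B_n$ paths. Let $\mathcal P_n$ be the finite, non-empty set of ordered $k$-tuples of pairwise edge-disjoint $A_n$--$B_n$ trails in $G_n$. Because $f_n$ is a bijection on non-contracted edges and sends $A_{n+1}$ into $A_n$ and $B_{n+1}$ into $B_n$, contracting-and-reducing induces a well-defined map $\hat f_n\colon \mathcal P_{n+1}\to\mathcal P_n$, so $(\mathcal P_n,\hat f_n)$ is an inverse system of non-empty finite sets and $\lim_{\leftarrow}\mathcal P_n\neq\emptyset$. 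Fixing $(S_n)_n$ with $S_n=(S_n^{(1)},\dots,S_n^{(k)})$, I would lift each coordinate exactly as in Proposition~\ref{theorem16}, with $S^1$ replaced by $I$: compatibility gives monotone surjections $g^{(j)}_n\colon I\to I$ with $f_n\circ S^{(j)}_{n+1}=S^{(j)}_n\circ g^{(j)}_n$, and since a monotone inverse limit of arcs is again an arc (cf.\ the $S^1$ fact used in Proposition~\ref{theorem16}), $P^{(j)}\colon I\cong\lim_{\leftarrow}I\to X$, $(t_n)_n\mapsto (S^{(j)}_n(t_n))_n$, is a path in $X$. Its endpoints lie in $A$ and $B$ (the start vertices $S^{(j)}_n(0)\in A_n$ give a nested sequence of cut-elements meeting the closed set $A$, whose intersection thus lies in $A$), and the $P^{(j)}$ are edge-disjoint, since a common edge $e$ would, at the level $N$ where $e$ is non-contracted, force two coordinates of $S_N$ to share the corresponding edge of $G_N$. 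This produces $k$ edge-disjoint $A$--$B$ paths, so $\max\ge k=\min$, completing the proof.

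\emph{Main obstacle.} I expect the delicate point to be the cut correspondence and the resulting stabilization of the minimum cut across the $G_n$, together with checking that $\hat f_n$ genuinely lands in $\mathcal P_n$ (that contraction-and-reduction of an edge-disjoint path system is again one, with endpoints correctly placed in $A_n,B_n$). The coordinatewise lift is then a direct adaptation of Proposition~\ref{theorem16}.
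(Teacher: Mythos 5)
Your proposal is correct, and its skeleton is the same as the paper's: represent $X$ as an inverse limit of finite multigraphs via Theorem~\ref{inverselimit}, note that every $A_n$--$B_n$ cut in $G_n$ pulls back to an $A$--$B$ cut in $X$ of equal size (so finite Menger applies in each $G_n$ with the same bound $k$), form an inverse system of finite non-empty sets of $k$-tuples of edge-disjoint witnesses, and take a thread. The one genuine difference is the choice of witnesses and the final extraction step. The paper puts into its inverse system $k$-tuples of edge-disjoint \emph{connected subgraphs} of $G_n$ meeting $\pi_n(A)$ and $\pi_n(B)$; each coordinate of a thread is then itself an inverse system of continua, whose limit is a subcontinuum of $X$ meeting both $A$ and $B$, and an $A$--$B$ path is extracted inside each such subcontinuum by arc-connectedness (Corollary~\ref{souslian}). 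This makes well-definedness of the induced maps $\hat{f}_n$ trivial (simplicial images of connected subgraphs are connected subgraphs) and avoids all parametrization issues. You instead use $k$-tuples of edge-disjoint \emph{trails} and lift parametrizations through monotone maps and Capel's theorem that a monotone inverse limit of arcs is an arc, exactly as in Proposition~\ref{theorem16}; this requires the extra checks you flag -- that contracting a trail yields a trail with endpoints still in $A_n$ and $B_n$, which holds because non-contracted edges map injectively and $A_n \cap B_n = \emptyset$ prevents degeneration to a single vertex -- but in return produces explicitly parametrized paths rather than subcontinua. You also spell out the cut-size correspondence between separations of $X$ and of $G_n$, which the paper leaves implicit in its one-line appeal to finite Menger; that bookkeeping is genuinely needed and is worth making explicit.
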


\begin{proof}
Let $k$ be the size of a smallest cut separating $A$ from $B$. Note that since $A$ and $B$ are closed disjoint, it follows from compactness that such a cut exists, and hence $k$ is finite by Lemma~\ref{finitecuts}. It is clear that the maximum number of edge-disjoint $A-B$ paths is bounded by $k$. 

Conversely, write $X$ as an inverse limit $X = \lim_{\leftarrow} G_n$ with simplicial bonding maps $f_n \colon G_{n+1} \to G_n$ and simplicial projection maps $\pi_n \colon X \to G_n$. Without loss of generality, $\pi_n(A) \cap \pi_n(B) = \emptyset$ for all $n$. Let $\script{T}_n$ be the (finite) space of all $k$-tuples of edge-disjoint connected subgraphs of $G_n$ that intersect both $\pi_n(A)$ and $\pi_n(B)$. By Menger's theorem for finite graphs, $\script{T}_n \neq \emptyset$ for all $n$, so $\script{T}_n$ with natural bonding maps $\hat{f}_n$ form their own inverse system, which is non-empty. Taking the inverse limit in each coordinate, we obtain $k$ edge-disjoint subcontinua of $X$ each intersecting both $A$ and $B$. By Corollary~\ref{souslian}, we can find $A-B$ paths inside each subcontinuum, which are then edge-disjoint by construction. 
\end{proof}

\begin{lemma}\label{lem_deg}
Let $X$ be a graph-like continuum and $v$ an even (resp.  odd) vertex in $X$. Then $v$ has strongly even (resp. odd) degree.
\end{lemma}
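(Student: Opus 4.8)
The plan is to prove the ``even'' case in full and obtain the ``odd'' case by replacing ``even'' with ``odd'' throughout, plus one extra observation flagged below. The whole argument is powered by the edge-disjoint Menger theorem for graph-like continua (Theorem~\ref{thm_menger}), which converts the quantity in the definition of strongly even degree---the maximal number of edge-disjoint arcs from $V \setminus A$ to $v$---into a statement about sizes of cuts, which is precisely the language in which evenness of a vertex is phrased. So the real content is a dictionary between the two definitions, with Menger as the translator.

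First I would fix a clopen neighborhood $A_0$ of $v$ witnessing that $v$ is even, so that $E(C, V\setminus C)$ is even for every clopen $C$ with $v \in C \subseteq A_0$, and claim that $A_0$ itself witnesses strongly even degree. Let $A$ be an arbitrary clopen neighborhood of $v$ with $A \subseteq A_0$. Since $A$ is clopen and $v \notin V \setminus A$, the sets $\{v\}$ and $V \setminus A$ are disjoint closed sets of vertices, so Theorem~\ref{thm_menger} applies: the maximal number of edge-disjoint arcs from $V \setminus A$ to $v$ equals the minimum size of a cut separating $V \setminus A$ from $v$.

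The key step is then a purely combinatorial identification of these separating cuts. A cut $E(P, V\setminus P)$ (for a separation $(P, V \setminus P)$ of $V$) separates $V \setminus A$ from $v$ exactly when $v \in P$ and $V \setminus A \subseteq V \setminus P$, that is, exactly when $v \in P \subseteq A$. Every such $P$ satisfies $v \in P \subseteq A \subseteq A_0$, so the evenness witness for $A_0$ forces $\cardinality{E(P, V\setminus P)}$ to be even. As the minimum of a nonempty set of even non-negative integers is even, Menger gives that the maximal number of edge-disjoint arcs from $V \setminus A$ to $v$ is even. Since $A$ was an arbitrary clopen neighborhood of $v$ inside $A_0$, this establishes strongly even degree.

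The main subtlety---and the only point at which the odd case is not a verbatim transcription---is the degenerate possibility $A = V$, where $V \setminus A = \emptyset$, the family of separating cuts is empty, and the arc count is $0$. In the even case this is harmless, since $0$ is even; but $0$ is not odd, so for the odd case I would first observe that an odd vertex's witness $A_0$ must be a \emph{proper} subset of $V$: otherwise taking $C = V$ would exhibit $E(C, V\setminus C) = \emptyset$ as an even cut with $v \in C \subseteq A_0$, contradicting that all such cuts are odd. With $A_0 \subsetneq V$, every relevant $A$ is proper, the set of separating cuts is nonempty (it contains $E(A, V \setminus A)$), and the identical minimum-of-odd-numbers argument yields strongly odd degree.
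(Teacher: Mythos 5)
Your proof is correct and follows essentially the same route as the paper's: apply the edge-disjoint Menger theorem (Theorem~\ref{thm_menger}) to convert the maximal number of edge-disjoint arcs from $V \setminus A$ to $v$ into a minimum cut, and observe that every cut separating $V \setminus A$ from $v$ has the form $E(P, V\setminus P)$ with $v \in P \subseteq A$, hence inherits the parity from the witness for $v$. Your write-up is in fact slightly more careful than the paper's, which asserts that $E(V\setminus A, A)$ itself \emph{is} the minimum separating cut (false in general, though harmless, since the true minimum is still one of the even cuts) and leaves the odd case and the degenerate possibility $A = V$ implicit; your treatment of both points is correct.
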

\begin{proof}
Let $v$ be an even vertex and let $C$ be a clopen neighborhood of $v$ such that if $A$ is a clopen neighborhood of $v$ contained in $C$, then $E(V(X)\setminus A, A)$ is even. Observe that $E(V(X)\setminus A,A)$ is the minimum cut separating $V(X)\setminus A$ from $v$. Hence by  Theorem~\ref{thm_menger}, the maximum number of edge-disjoint paths from $V(X)\setminus A$ to $v$ is equal $\cardinality{E(V(X)\setminus A, A)}$ which is even. This shows that $v$ is strongly even.
\end{proof}

However, in general, strongly even degree vertices need not be even.
\subsubsection*{Example} The right hand vertex in the graph-like continuum illustrated below is neither even nor odd but has strongly even degree.

\bigskip

\begin{center}
\begin{tikzpicture}[thick]

\draw (1.5,0) circle[radius=1.5]; 
\draw (0,0) -- (3,0);

\draw (4.25,0) circle[radius=1.25]; 

\draw (6.4,0) circle[radius=0.9]; 
\draw (5.5,0) -- (7.3,0);

\draw (7.8,0) circle[radius=0.5]; 

\draw (8.6,0) circle[radius=0.3]; 
\draw (8.3,0) -- (8.9,0);

\draw[dotted,thin] (9.1,0) -- (9.8,0);

\draw (10,0) node[circle,fill=red, inner sep=1.5]  {};
\end{tikzpicture}
\end{center}
If each simple circle, 
\tikz{\draw circle[radius=0.2];}, in the above example is replaced with a copy of  \tikz{\draw circle[radius=0.2]; \draw ellipse[x radius=0.2, y radius=0.075];},
then in the resulting graph-like continuum the right hand vertex has strongly odd degree.

\medskip 

Our aim is to prove the following theorem, generalizing corresponding results of Bruhn \& Stein \cite{euler} and Berger \& Bruhn \cite{standard} for Freudenthal compactifications of 
graphs, and their Eulerian subspaces. 
Observe that this theorem can be rephrased as saying that although not every vertex of strongly even degree must be even, if \emph{all} vertices of a graph-like continuum have strongly even degree then they are  \emph{all}  even.

\begin{theorem}\label{bs_e}
A graph-like continuum is closed Eulerian if and only if all its vertices have strongly even degree.
\end{theorem}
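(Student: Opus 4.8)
\emph{Overview.} The statement is a biconditional, and I would prove the two implications separately; the forward one is essentially immediate from results already established, while the reverse one carries all the difficulty.

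\emph{Closed Eulerian implies strongly even everywhere.} Suppose $X$ is closed Eulerian. By the equivalence of (i) and (ii) in Theorem~(B) every vertex of $X$ is even, and then Lemma~\ref{lem_deg} upgrades ``even'' to ``strongly even degree'' at every vertex. So this direction needs no new work.

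\emph{Strongly even everywhere implies closed Eulerian.} By Proposition~\ref{theorem16} it suffices to show that if every vertex of $X$ has strongly even degree then every topological cut of $X$ is even, and I would prove the contrapositive. The bridge between the two notions of parity is the edge-disjoint Menger theorem, Theorem~\ref{thm_menger}: for a clopen neighbourhood $A$ of a vertex $v$, the maximal number of edge-disjoint arcs from $V\setminus A$ to $v$ equals the minimum cut $m_v(A)=\min\{\cardinality{E(S,V\setminus S)} : v\in S\subseteq A,\ S \text{ clopen}\}$ separating $v$ from $V\setminus A$. Thus $v$ has strongly even degree exactly when $m_v(A)$ is even for all sufficiently small clopen $A\ni v$, and, using connectedness of $X$ and the absence of edges between distinct components, a minimiser realising $m_v(A)$ may be taken to be a region. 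All parity bookkeeping is organised by the map $\chi(S)=\cardinality{E(S,V\setminus S)}\bmod 2$, which is additive on disjoint clopen sets, $\chi(S\sqcup T)=\chi(S)+\chi(T)$, since $\cardinality{E(S\cup T,V\setminus(S\cup T))}=\cardinality{E(S,V\setminus S)}+\cardinality{E(T,V\setminus T)}-2\cardinality{E(S,T)}$.

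\emph{The contradiction.} Assume $X$ has an odd cut while every vertex is strongly even. I would pass to an odd cut $E(A,B)$ of \emph{minimum} size among all odd cuts (possible as all cuts are finite by Lemma~\ref{finitecuts}); a short argument using connectedness of $X$ shows that for such a minimum odd cut the induced subspace $X[A]$ is connected, so $A$ is a region. (If $X[A]$ had several components there would be no edges between them, each component's cut would be a summand of $\cardinality{E(A,B)}$, and an odd summand would force all others to have empty cut and hence be components of $X$, impossible.) Now I localise the parity: given an odd-cut region and a vertex $v$ inside it, strong evenness supplies an even min-cut region $S\ni v$, and the annulus $A\setminus S$ again has odd cut because $\chi(A\setminus S)=\chi(A)+\chi(S)=1$. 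Iterating this ``push the oddness off $v$'' step and invoking compactness of $V$, I would extract a single vertex $w$ every clopen neighbourhood of which contains a clopen set of \emph{odd minimum} cut; by the Menger translation this says exactly that $w$ fails to have strongly even degree, contradicting the hypothesis.

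\emph{Where the difficulty lies.} The one genuinely subtle point is the localisation in the previous paragraph, and it is the reason the theorem is not routine. Strong evenness controls only the parity of \emph{minimum} cuts, whereas an arbitrary cut can have the opposite parity to the minimum cut separating the same vertex: this is visible already in the Cantor bouquet of semi-circles and in the shrinking-circles example following Lemma~\ref{lem_deg}, where a vertex of strongly even degree sits at the limit of a chain of odd cuts. Consequently a naive shrinking of odd cuts (as in Lemmas~\ref{prop:sequenceoddcuts} and~\ref{lem_even}) can converge to a vertex that is nonetheless strongly even, so one must instead follow the oddness into the min-cut complement $A\setminus S$ at each stage. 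Forcing this procedure to converge to a single offending vertex rather than to wander is the heart of the matter; I would control it by combining minimality of the chosen odd cut with submodularity of $\kappa(S)=\cardinality{E(S,V\setminus S)}$, so that successive min-cut regions uncross and the odd cuts neither grow nor escape, pinning them to a limit vertex of odd minimum cut.
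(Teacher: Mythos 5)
Your forward direction is correct and is exactly the paper's: Theorem~(B) gives that a closed Eulerian graph-like continuum has all vertices even, and Lemma~\ref{lem_deg} upgrades even to strongly even. Your framing of the converse is also the paper's: by Proposition~\ref{theorem16} it suffices to derive, from an odd cut, a vertex of weakly odd degree, and the edge-disjoint Menger theorem (Theorem~\ref{thm_menger}) translates this into exhibiting a vertex $w$ with arbitrarily small clopen neighbourhoods $A$ for which the \emph{minimum} cut separating $w$ from $V\setminus A$ is odd; this is precisely the mechanism of the paper's Lemma~\ref{lem_oddend}. The gap is in the iteration that is supposed to produce such a $w$. At stage $n$ you choose a vertex $v_n$ in the current odd region $A_n$, remove an even min-cut region $S_n\ni v_n$, and pass to an odd piece $A_{n+1}\subseteq A_n\setminus S_n$. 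Consequently the limit vertex $w\in\bigcap_n A_n$ is \emph{never} one of the chosen vertices: each $v_n$ is excised together with $S_n$. So at no stage is any even min-cut region around $w$ itself dealt with, and nothing in the construction prevents the minimum cut separating $w$ from $V\setminus A_n$ from being even for every $n$ --- in which case the hypothesis of strong evenness is never contradicted. The nested odd cuts $\partial A_n$ show only that $w$ fails to be \emph{even} in the sense of Theorem~(B)(ii), as in Lemma~\ref{lem_even}, and this is strictly weaker than having weakly odd degree: the paper's example following Lemma~\ref{lem_deg} (the chain of circles and theta-graphs) exhibits exactly a vertex that is the intersection of a nested sequence of odd-cut clopen sets and yet has strongly even degree, because all the relevant minimum cuts are even. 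Your final sentence silently passes from ``clopen sets of odd cut'' to ``clopen sets of odd \emph{minimum} cut'', and that passage is the whole theorem.

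You do name this difficulty, but the proposed remedy --- minimality of the initial odd cut plus submodularity (Lemma~\ref{l_cutlemma}) --- does not repair it. Uncrossing an even min-cut region $S\ni w$ against the next odd region $A_{n+1}$ produces (by the mod-$2$ valuation identity and minimality of $|\partial A_{n+1}|$ among odd regions in $A_n$) again an even clopen set containing $w$ of boundary at most $|\partial S|$, either containing $A_{n+1}$ or contained in it; both outcomes are fully consistent with the minimum cut at $w$ staying even at every stage, which is exactly the configuration of the example above, so the iteration terminates in no contradiction. (Minimality also fails to keep the odd cuts from growing --- $|\partial A_{n+1}|$ can be as large as $|\partial A_n|+|\partial S_n|$ --- but that is a red herring: in the paper's own construction the odd boundaries are non-decreasing. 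The only issue is the parity of minimum cuts at the limit.) What is actually required is to neutralize \emph{all} even regions of boundary smaller than $|\partial A_n|$ inside $A_n$ --- of which there may be infinitely many, overlapping and forming infinite chains --- \emph{before} the next odd region is chosen, so that condition~(3) of Lemma~\ref{lem_oddend} holds and Menger then forces the minimum cuts at the limit vertex to be odd. This is what the paper's Contraction Machine does (Theorem~\ref{thm_contraction}, built from Lemmas~\ref{lem_cleaninglemma}, \ref{lem_contraction}, \ref{lem_componentsareeven} and~\ref{nooddfiniteregions}): it contracts a suitable disjoint family of even regions, verifies that the quotient is again a graph-like continuum whose isolated vertices are even, and Theorem~\ref{se_imp_E} iterates this through a whole sequence of quotient spaces, finally pulling the nested odd regions back to $X$. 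Some mechanism of this strength, handling all small even regions simultaneously rather than one vertex at a time, is indispensable and is what your sketch is missing.
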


It is an interesting open problem, whether the same conclusion holds under the assumption that all vertices have weakly even degree. 
The forward implication of  Theorem~\ref{bs_e} follows from Lemma~\ref{lem_deg}, Lemma~\ref{lem_even} and Proposition~\ref{theorem16}. Theorem~\ref{se_imp_E} establishes the converse. The plan for the proof of Theorem~\ref{se_imp_E} is to establish the contrapositive: if $X$ is a graph-like continuum which is not closed Eulerian then it contains a vertex without strongly even degree (i.e. of weakly odd degree). Lemma~\ref{lem_oddend} shows how a certain sequence of regions leads to such a vertex. Now if $X$ is a graph-like continuum which is not closed Eulerian, then by Theorem~(B) (iii) $\implies$ (i), there must be an odd cut in $X$. This provides the starting point for the sequence needed to apply Lemma~\ref{lem_oddend}. Theorem~\ref{thm_contraction} then  provides the `Contraction Machine' required to create the remaining elements of the sequence.

\medskip

If $v$ and $w$ are distinct vertices in a graph-like continuum $X$, and they both have  strongly odd degree, then after connecting them with a new edge they will both have strongly even degree. Conversely if they both have strongly even degree, then after removing an edge connecting them, they will have strongly odd degree. 
Hence, as we deduced the open version of Theorem~(B) from the closed version, we now derive the following characterization of open Eulerian graph-like continua.
\begin{theorem}
A graph-like continuum is open  Eulerian if and only if it has exactly two strongly odd degree vertices, and the rest  have strongly even degree.
\end{theorem}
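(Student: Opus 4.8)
The plan is to deduce this open version directly from the closed version, Theorem~\ref{bs_e}, by the same add-an-edge reduction used earlier to pass from the closed half of Theorem~(B) to its open half. The only extra ingredient the reduction needs is the observation recorded just before the theorem statement: attaching or deleting a single edge between two distinct vertices $v$ and $w$ flips the strong parity at $v$ and at $w$, and leaves the strong parity of every other vertex unchanged.

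For the forward implication, suppose $X$ is open Eulerian, so there is an open Eulerian path with distinct vertex endpoints $v \neq w$. Form a graph-like continuum $Z$ by adjoining to $X$ a single edge $e$ with ends $v$ and $w$; concatenating the path with $e$ produces a closed Eulerian loop, so $Z$ is closed Eulerian. By Theorem~\ref{bs_e} every vertex of $Z$ has strongly even degree. Deleting $e$ to recover $X$, the quoted observation changes the strong parity at exactly $v$ and $w$, so in $X$ these two vertices have strongly odd degree while all remaining vertices retain strongly even degree. Thus $X$ has precisely two strongly odd vertices, as required.

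For the converse, suppose $X$ has exactly two strongly odd vertices $v$ and $w$, and all others strongly even. Again adjoin an edge $e$ joining $v$ and $w$ to form $Z$. By the observation, $v$ and $w$ now have strongly even degree in $Z$, and every other vertex is unchanged and hence strongly even; so all vertices of $Z$ have strongly even degree, and Theorem~\ref{bs_e} supplies a closed Eulerian loop $f$ of $Z$. Since a standard loop is injective on the interior of each edge, $f$ traverses $e$ exactly once; excising the sub-arc of the domain that maps onto $e$ and reparametrizing the complementary arc yields an open Eulerian path in $X$ from $v$ to $w$. Hence $X$ is open Eulerian.

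The one point requiring real care---the main obstacle---is the transfer of strong parity under a single edge addition, i.e.\ the justification of the quoted observation. I would argue it through the edge-disjoint Menger theorem, Theorem~\ref{thm_menger}: for a vertex $u$ and a clopen neighborhood $A \ni u$, the maximal number of edge-disjoint arcs from $V \setminus A$ to $u$ equals the minimum cut separating $u$ from $V \setminus A$, and this minimum is attained by some clopen $A^*$ with $u \in A^* \subseteq A$, so the relevant quantity is the parity of $|E(A^*, V \setminus A^*)|$. Because $V$ is zero-dimensional and $v,w,u$ are distinct, I can shrink the witnessing clopen neighborhood so as to exclude the other endpoints: a small clopen neighborhood of $u \notin \{v,w\}$ omits both $v$ and $w$, so $e$ lies entirely on the $V \setminus A$ side of each such cut and contributes nothing, leaving all these cut sizes---and hence $u$'s strong parity---unchanged; whereas every small clopen neighborhood of $v$ (chosen to omit $w$) is crossed by $e$, so each such cut gains exactly the edge $e$, increasing its size by one and flipping $v$'s strong parity, and symmetrically for $w$. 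The remaining bookkeeping---that $Z$ is again a connected graph-like continuum, and that deleting the adjoined edge from a closed Eulerian loop leaves an open Eulerian path---is routine and already implicit in the closed-to-open passages above.
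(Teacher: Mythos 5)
Your proposal is correct and takes essentially the same route as the paper: the paper likewise deduces the open characterization from Theorem~\ref{bs_e} by the add/remove-an-edge trick, stating (without proof) the key observation that a single edge joining $v$ and $w$ flips the strong parity at exactly those two vertices. Your Menger-based justification of that parity flip, via Theorem~\ref{thm_menger} and shrinking clopen neighborhoods to exclude the other endpoint, is sound and merely fills in detail the paper leaves implicit.
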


\subsection{The odd-end lemma}

For a clopen subset $U \subset V(X)$, consider the induced graph-like space $X[U]$. We say that a clopen subset $U \subset V(X)$ is a \emph{region} if $X[U]$ is connected. By $\partial U \subset E(X)$ we denote the set of edges between the separation $(U, V \setminus U)$. This set is finite for regions $U$. Let us call a region $U$ of $X$ a $k$-region if $\cardinality{\partial U}=k$, and an \emph{even} or an \emph{odd} region depending on whether $k$ is even or odd. 

The following lemma generalizes the corresponding lemma of Bruhn \& Stein for locally finite graphs, \cite[p.7f]{euler}, to graph-like continua.

\begin{lemma}
\label{lem_oddend}
Let $X$ be a graph-like continuum, and let $E(X) = \Set{e_0,e_1,\ldots}$ be an enumeration of its edges. Assume there exists a sequence of regions $U_0, U_1, \ldots$ of $X$ with the following properties:
\begin{enumerate}
	\item\label{odd} $\cardinality{\partial U_n}$ is odd for all $n \in \N$,
	\item\label{nested} $ U_n \supset U_{n+1}$,	
	\item\label{notsmaller} if $D$ is a region of $X$ with $U_{n} \supset D \supset U_{n+1}$ then $\cardinality{\partial U_{n}} \leq \cardinality{\partial D} $ for all $n \in \N$, and 
    \item\label{nobigedges} $e_n \notin E[U_{n+1}]$.
\end{enumerate}
Then $X$ has a vertex which has weakly odd degree.
\end{lemma}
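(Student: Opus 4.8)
The plan is first to locate the vertex promised by the lemma as the intersection of the nested regions. Set $K=\bigcap_{n\in\N} X[U_n]$. Each $X[U_n]$ is closed and, being a region, connected, so by condition~\ref{nested} the $X[U_n]$ form a decreasing sequence of subcontinua and $K$ is a non-empty subcontinuum of $X$. I claim $K$ contains no edge: if some edge $e_m\subseteq K$, then in particular $e_m\subseteq X[U_{m+1}]$, so $e_m\in E[U_{m+1}]$, contradicting condition~\ref{nobigedges}. Hence $K\subseteq V$, and since $V$ is zero-dimensional a connected subset is a single point, so $K=\singleton{v}$ for a vertex $v$. As the $U_n$ are clopen and decreasing with $\bigcap_n U_n=\singleton{v}$, compactness of $V$ shows that $\Set{U_n:n\in\N}$ is a neighbourhood base for $v$ in $V$; that is, every clopen neighbourhood $C$ of $v$ contains some $U_n$.

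\textbf{Reduction to odd minimum cuts.}
By the neighbourhood-base reformulation of weak parity recorded above, it suffices to produce, inside every clopen neighbourhood of $v$, a clopen neighbourhood $A$ of $v$ for which the maximal number of edge-disjoint arcs from $V\setminus A$ to $v$ is odd. Applying the edge-disjoint Menger theorem (Theorem~\ref{thm_menger}) to the disjoint closed vertex sets $V\setminus A$ and $\singleton{v}$, this maximum equals the size of a minimum cut separating them, namely
\[
d(A)=\min\Set{\cardinality{\partial P} : v\in P\subseteq A,\ P\text{ a region}}.
\]
Thus the lemma reduces to the following: for each $n$ there is a region $P$ with $v\in P\subseteq U_n$ realising an \emph{odd} minimum cut, i.e.\ with $d(P)=\cardinality{\partial P}$ odd (equivalently, a self-minimal odd region arbitrarily close to $v$).

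\textbf{The parity engine.}
Fix $n$ and let $\hat P$ be a region with $v\in\hat P\subseteq U_n$ of minimum boundary, so $\cardinality{\partial\hat P}=d(U_n)$ and $\hat P$ realises its own minimum cut. If $d(U_n)$ is odd we finish by taking $A=\hat P$; the task is to force this parity for arbitrarily large $n$. This is where conditions~\ref{odd} and~\ref{notsmaller} must interact. Whenever the minimum cut is strictly below the odd boundary, $\cardinality{\partial\hat P}=d(U_n)<\cardinality{\partial U_n}$, condition~\ref{notsmaller} forbids $\hat P$ from being sandwiched between $U_{n+1}$ and $U_n$, hence $U_{n+1}\not\subseteq\hat P$. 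Comparing $\hat P$ with $U_{n+1}$ through the elementary submodular (``uncrossing'') inequality for the edge-boundary function,
\[
\cardinality{\partial(\hat P\cup U_{n+1})}+\cardinality{\partial(\hat P\cap U_{n+1})}\le\cardinality{\partial\hat P}+\cardinality{\partial U_{n+1}},
\]
and applying condition~\ref{notsmaller} to the sandwiched region $\hat P\cup U_{n+1}$, one controls the boundary of the $v$-component of $\hat P\cap U_{n+1}$. Running this alongside a handshake count in the finite contraction graph of the multi-cut $\Set{\hat P}\cup\Set{\text{components of }X[U_n\setminus\hat P]}\cup\Set{V\setminus U_n}$---in which the vertex $V\setminus U_n$ has odd degree $\cardinality{\partial U_n}$---pins down the parities of the edges crossing the annulus $U_n\setminus\hat P$ and produces an odd region inside each annulus.

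\textbf{The main obstacle.}
The genuinely delicate step, and the heart of the argument, is to convert this local parity bookkeeping into the global statement that the minimum cut at $v$ is odd cofinally often. By itself submodularity only yields that the defect $\cardinality{\partial U_n}-d(U_n)$ is non-decreasing, which does not fix its parity; one must exploit the \emph{entire} nested tower together with condition~\ref{nobigedges} (which drives the minimisers down onto $v$) to show that the handshake-forced odd witness in the annuli cannot forever stay away from $v$, so that an odd self-minimal region survives arbitrarily close to $v$. I expect the cleanest route is the contrapositive: assuming $v$ had strongly even degree, witnessed by a clopen $C$, choose $U_n\subseteq C$; then every sub-region of $C$ containing $v$ would have even minimum cut, and I would derive a contradiction with the odd boundary $\cardinality{\partial U_n}$ of condition~\ref{odd} by pushing the odd-parity witness inward through the successive annuli while tracking minimum cuts via Theorem~\ref{thm_menger}. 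Making this inward-propagation terminate at $v$ is the step I anticipate requires the most care.
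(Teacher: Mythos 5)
Your opening steps are correct and coincide with the paper's own proof: the limit $\bigcap_n X[U_n]$ is a single vertex $v$ by hypothesis~(\ref{nested}) and~(\ref{nobigedges}), the $U_n$ form a clopen neighbourhood base at $v$, and by Theorem~\ref{thm_menger} it suffices to show that the minimum cut $d(U_n)$ over regions $P$ with $v\in P\subseteq U_n$ is odd. But from there the proposal has a genuine gap, and it is exactly the step that constitutes the entire content of the lemma: you never prove that $d(U_n)$ is odd for a single $n$. Your ``parity engine'' yields only that the defect $\cardinality{\partial U_n}-d(U_n)$ is non-decreasing (true, but useless on its own, as you note), and your final paragraph openly concedes that the proposed contrapositive/``inward propagation'' argument is not carried out and that you do not know how to make it terminate. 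A proof that ends by describing the main obstacle is not a proof.

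The missing idea is that no propagation to infinity is needed, because the neighbourhood-base property you already established localizes everything to a \emph{finite} induction. Let $P$ be any region with $v\in P\subseteq U_n$. Since the $U_m$ form a neighbourhood base at $v$ and $P$ is a clopen neighbourhood of $v$ in $V(X)$, we have $U_n\supseteq P\supseteq U_m$ for some $m\geq n$. Now induct on $m-n$ to show $\cardinality{\partial P}\geq\cardinality{\partial U_n}$. If $P\supseteq U_{n+1}$ this is hypothesis~(\ref{notsmaller}). Otherwise $P\cup U_{n+1}$ is a region (clopen, connected through $v$) with $U_n\supseteq P\cup U_{n+1}\supseteq U_{n+1}$, so $\cardinality{\partial\p{P\cup U_{n+1}}}\geq\cardinality{\partial U_n}$ by~(\ref{notsmaller}); and the $v$-component $F'$ of $X[P\cap U_{n+1}]$ is a region (there are only finitely many components, by Lemma~\ref{finitecuts} as in Lemma~\ref{basisregion}) satisfying $U_{n+1}\supseteq F'\supseteq U_m$, so $\cardinality{\partial F'}\geq\cardinality{\partial U_{n+1}}$ by the induction hypothesis. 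Since $\partial F'\subseteq\partial\p{P\cap U_{n+1}}$, Lemma~\ref{l_cutlemma} gives $\cardinality{\partial P}+\cardinality{\partial U_{n+1}}\geq\cardinality{\partial\p{P\cup U_{n+1}}}+\cardinality{\partial\p{P\cap U_{n+1}}}\geq\cardinality{\partial U_n}+\cardinality{\partial U_{n+1}}$, hence $\cardinality{\partial P}\geq\cardinality{\partial U_n}$. Therefore $d(U_n)=\cardinality{\partial U_n}$ exactly: the defect is zero, not merely of fixed parity, so $A=U_n$ itself is the odd witness for every $n$ --- there is no need to hunt for self-minimal odd regions, do handshake bookkeeping in annuli, or argue by contrapositive. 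This finite uncrossing argument is precisely what the paper's (two-sentence) proof is asserting when it says that hypothesis~(\ref{notsmaller}) together with Theorem~\ref{thm_menger} shows the maximal number of edge-disjoint arcs from $V\setminus U_n$ to $v$ equals $\cardinality{\partial U_n}$.
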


\begin{proof}
Since $A=\bigcap_{n \in \N} X[U_n]$ is a nested intersection of continua by (\ref{nested}), it is  non-empty and connected. It follows from (\ref{nobigedges}) that $A \subset V(X)$, so $A =\singleton{v}$ for some vertex $v$, since $V(X)$ is totally disconnected. Furthermore, compactness implies that $\set{U_n}:{n \in \N}$ is a neighborhood base for $v$ in $V(X)$.

Property (\ref{notsmaller}) together with Theorem~\ref{thm_menger} shows that for all $U_n$ the maximal number of edge disjoint arcs from $V\setminus U_n$ to $v$ equals $\cardinality{\partial U_n}$, so is odd by $(\ref{odd})$. Since the $U_n$ form a neighborhood base, it follows that $v$ has weakly odd degree.
\end{proof}

\subsection{The contraction machine}

Suppose we have an odd region $U_0$. We want to construct a sequence as in Lemma~\ref{lem_oddend}. 
If we recursively choose an odd region $U_{n+1}$ of minimal $\cardinality{\partial U_{n+1}}$ amongst all odd regions contained in $U_{n}$, then (\ref{odd}) and (\ref{nested}) are fine, and property (\ref{notsmaller}) is satisfied at least for all odd regions $D$ nested between $U_{n}$ and $U_{n+1}$. Following Bruhn \& Stein's idea \cite{euler}, our plan for evading all even regions $D$ with $\cardinality{\partial D}< \cardinality{\partial U_n}$ nested between $U_{n}$ and $U_{n+1}$ is roughly as follows: first, we contract all even regions $D \subset U_n$ with boundary smaller than $\cardinality{\partial U_n}$ to single points. Only then do we pick our region $U_{n+1}$. After uncontracting, this means that every small even region lies either behind $U_{n+1}$, or is completely disjoint from $U_{n+1}$. 

The next result formalizes this idea for contracting regions.

\begin{theorem}[Contraction Theorem]
\label{thm_contraction}
Let $X$ be a graph-like continuum such that all isolated vertices are even. Suppose further that $U \subset X$ is an odd region of $X$ such that for some even $m>0$, there is no infinite $k$-region with $k < m$ of $X$ contained in $U$. 

Then there is a collection $\script{M}$ of disjoint regions of $U$ such that after contracting every element of $\script{M}$ to a single point, the graph-like continuum $X/\script{M}$, with associated (monotone) quotient map $\pi \colon X \to X/\script{M}$, has the property that 
\begin{enumerate}[label=(\roman*)]
\item all isolated vertices of $X/\script{M}$ are even,
\item there are no infinite $k$ regions with $k \leq m$ contained in the region $\pi(U) \subset X/\script{M}$, and
\item if $D \subset U$ is an $\ell$-region of $X$, then there is an $\leq \ell$-region $D' \subset \pi(U)$ such that $\cardinality{\pi(D) \setminus D'} < \infty$.
\end{enumerate}
\end{theorem}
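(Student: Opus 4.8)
The plan is to produce $\script{M}$ by contracting a carefully chosen family of \emph{infinite $m$-regions} of $U$, i.e. regions $D\subseteq U$ with $\cardinality{\partial D}=m$ and $X[D]$ infinite. Since $m$ is even, each such region, once collapsed to a point, becomes an isolated vertex of even degree $m$; this is what will drive~(i). The real work is to arrange that \emph{every} infinite $\le m$-region of $\pi(U)$ is destroyed (property~(ii)) while no region's boundary is forced to grow (property~(iii)).

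The basic tool is the submodular identity for edge-boundaries: for clopen $A,B\subseteq V(X)$,
\[
\cardinality{\partial A}+\cardinality{\partial B}=\cardinality{\partial(A\cup B)}+\cardinality{\partial(A\cap B)}+2\,e(A\setminus B,\,B\setminus A),
\]
where $e(\cdot,\cdot)$ counts edges running between the two sets (all finite by Lemma~\ref{finitecuts}). Combined with the standing hypothesis that $U$ contains no infinite $k$-region with $k<m$ — so every infinite region inside $U$ has at least $m$ boundary edges — this yields the key \emph{Combination Lemma}: if $D_1,D_2$ are infinite $m$-regions with $D_1\cap D_2$ infinite, then $D_1\cap D_2$ is a single infinite $m$-region and $D_1\cup D_2$ is again an infinite $m$-region. (The identity forces $\cardinality{\partial(D_1\cup D_2)}=\cardinality{\partial(D_1\cap D_2)}=m$ and $e=0$, using that the union and every infinite component of the intersection have boundary $\ge m$, while a component with empty boundary would give a proper clopen subset of the continuum $X$.) From this I would deduce that every infinite $m$-region lies in a unique inclusion-maximal one, and that two distinct inclusion-maximal infinite $m$-regions can meet only in a finite set. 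Existence of maximal regions I would get from the inverse-limit representation of Theorem~\ref{inverselimit}: along an increasing chain the traces on each finite quotient graph $G_n$ stabilise, yielding a clopen upper bound of boundary $m$ (a strictly ascending chain with non-clopen union would, by the boundary computation, leave an infinite region of boundary $<m$, contradicting the hypothesis). I then take $\script{M}$ to be a maximal pairwise-disjoint family of inclusion-maximal infinite $m$-regions, via Zorn's Lemma applied to disjoint families (a chain of such families has a disjoint union, so there is no limit problem here).

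That $X/\script{M}$ is again a graph-like continuum with monotone quotient map $\pi$ follows from the machinery developed earlier: the subcontinua $X[M]$ are pairwise disjoint, hence a null sequence by finite Souslianness (Corollary~\ref{souslian}); therefore the decomposition is upper semicontinuous, $X/\script{M}$ is a metrizable continuum, and one checks it is graph-like (e.g. through the inverse-limit characterisation, Theorem~\ref{gralikeasinverse}). Property~(i) is then immediate: each contracted point $[M]$ is isolated (as $M$ is clopen) of even degree $\cardinality{\partial M}=m$, while a vertex $v\notin\bigcup\script{M}$ is isolated in $X/\script{M}$ exactly when it was isolated in $X$ (its singleton preimage must be clopen), and its degree is unchanged, so it remains even. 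For~(ii) I argue that an infinite region $R\subseteq\pi(U)$ with $\cardinality{\partial R}\le m$ pulls back to an $\script{M}$-saturated infinite region $\pi^{-1}(R)$ of the same boundary size; boundary $<m$ contradicts the hypothesis, and if the boundary is $m$ then — \emph{provided} $\pi^{-1}(R)$ meets some $M\in\script{M}$ — saturation plus inclusion-maximality of $M$ force $\pi^{-1}(R)=M$, so $R$ is a single point, not infinite. For~(iii), given an $\ell$-region $D\subseteq U$, only finitely many members of $\script{M}$ can cross $\partial D$ (their contributions to $\partial D$ are distinct edges and $\partial D$ is finite), so saturating $D$ across those finitely many crossings on the cheaper side — selected, via the submodular identity and maximality, so as not to raise the boundary — produces $D'\subseteq\pi(U)$ with $\cardinality{\partial D'}\le\ell$ and $\pi(D)\setminus D'$ finite.

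The step I expect to be the main obstacle is the overlap analysis underpinning~(ii): namely, ruling out that some infinite $m$-region survives by \emph{only finitely} crossing the members of $\script{M}$ — equivalently, showing the maximal disjoint family is genuinely \emph{covering}, so that the saturated preimage $\pi^{-1}(R)$ above is never disjoint from $\bigcup\script{M}$. This is precisely where the submodular identity, the evenness of $m$, and the no-small-region hypothesis must be balanced against one another, and it is the heart of the Bruhn \& Stein argument that we are generalising from locally finite graphs to graph-like continua.
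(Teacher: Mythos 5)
Your local tool is the right one (your submodular identity is the paper's Lemma~\ref{l_cutlemma}), and your Combination Lemma is true, but the global step it is meant to feed --- the existence of inclusion-maximal infinite $m$-regions --- fails, and the parenthetical argument you give for it is false. Consider the Freudenthal compactification of the two-way infinite ladder with doubled rungs: vertices $u_n,w_n$ ($n\in\mathbb{Z}$), two parallel edges between $u_n$ and $w_n$, rails $u_nu_{n+1}$ and $w_nw_{n+1}$, and two ends $\omega_-,\omega_+$. Every isolated vertex has degree $4$, there is no infinite $k$-region with $k<2$, and the sets $S_n=\{\omega_-\}\cup\{u_k,w_k : k\le n\}$ form a strictly increasing chain of infinite $2$-regions whose union omits only $\omega_+$, hence is not clopen; the only clopen vertex set containing every $S_n$ is $V$ itself. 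So a strictly ascending chain with non-clopen union does \emph{not} force an infinite region of boundary $<m$, the ``stabilised traces'' produce the whole space rather than an $m$-region, chains of infinite $m$-regions genuinely lack upper bounds, and Zorn's Lemma cannot be applied. This is not a presentational defect but the central phenomenon the paper's proof is organised around: instead of maximal regions, the paper enumerates \emph{all} infinite $m$-regions $R_0,R_1,\ldots$, uses a cleaning lemma (Lemma~\ref{lem_cleaninglemma}) to build finite disjoint families $\script{S}_n$, nested under refinement, with $\cardinality{R_j\setminus\bigcup\script{S}_n}<\infty$ for all $j\le n$ (Lemma~\ref{lem_contraction}), and then handles the unavoidable infinite ascending chains in $\script{S}=\bigcup_n\script{S}_n$ by contracting the bottom region of each chain together with every component of each difference $S_{n+1}\setminus S_n$; Lemma~\ref{lem_componentsareeven} shows those components are \emph{even} regions (typically not $m$-regions), which is what keeps property (i) alive. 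In particular a correct $\script{M}$ cannot consist of $m$-regions only, as yours does.

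The second gap is the one you flagged yourself, and it is equally real: even granting maximal infinite $m$-regions, a maximal pairwise-disjoint family $\script{M}$ of them need not absorb every infinite $m$-region. An infinite $m$-region $R$ whose maximal region meets some $M\in\script{M}$ in a nonempty finite set is blocked from entering the family, yet $R$ itself may be disjoint from $\bigcup\script{M}$; then $\pi(R)$ survives as an infinite $m$-region in $X/\script{M}$ and (ii) fails. In the paper the covering property is not deduced from any maximality principle --- it is built into the construction, since every infinite $m$-region appears in the enumeration and is covered mod a finite set by $\bigcup\script{S}_n$ from stage $n$ on, and one then checks that each member of $\script{S}$ is contracted to finitely many points (a single point for maximal elements of $\script{S}$, at most $m\cdot n+1$ points for an element with $n$ predecessors in a chain). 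Your verification of (iii) by resolving the finitely many crossings of $\partial D$ with members of $\script{M}$ via submodularity does match the paper's proof of (iii), and your null-sequence argument that $X/\script{M}$ is a graph-like continuum is the paper's; but as it stands the family $\script{M}$ you propose cannot be shown to exist, and even conditionally on its existence property (ii) remains unproved, so the two essential points of the theorem are missing.
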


We divide the proof into a sequence of lemmas. For two subsets $A,B \subset X$, say that $A$ \emph{splits} $B$, or $B$ \emph{is split by} $A$, if $A\cap B \neq \emptyset \neq B \setminus A$.

\begin{lemma}
\label{lem_cleaninglemma}
Let $X$ be a graph-like continuum, and $U \subset X$ a region. Let $R, S_1, \ldots, S_n$ be infinite $m$-regions contained in $U$, where $S_1, \ldots, S_n$ are pairwise disjoint and $|R \setminus \bigcup_{i \leq n} S_i |= \infty$. 

If there is no infinite $k$-region with $k < m$ of $X$ contained in $U$, then there is an $m$-region $\tilde{R}$ which doesn't split any $S_i$ such that $|R \setminus ( \bigcup_{i \leq n} S_i \cup \tilde{R} )|< \infty$.
\end{lemma}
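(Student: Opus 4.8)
Write $d(W)=|\partial W|$ for the cut size around a clopen set $W \subseteq V(X)$, which is finite by Lemma~\ref{finitecuts}. My proof would rest on three facts. First, $d$ is \emph{submodular}: sorting the edges of $X$ according to the four cells cut out by two clopen sets $A,B$ yields
\[
d(A)+d(B)=d(A\cup B)+d(A\cap B)+2\,|E(A\setminus B,\,B\setminus A)|.
\]
Second, since $X$ is connected, no proper non-empty clopen set has boundary $0$; hence every component of an induced space $X[W]$ contributes at least one edge to $\partial W$, so a clopen set with finite boundary has only finitely many components, each itself a region. Third, the minimality hypothesis says exactly that every \emph{infinite} region contained in $U$ has boundary $\ge m$. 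The overall plan is to \emph{uncross} $R$ against the $S_i$ one at a time, so it suffices to treat a single crossing and then iterate.

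For the single step, suppose $R'$ is an infinite $m$-region in $U$ that splits some $S=S_i$. Put $a=|E(R'\cap S,\;V\setminus(R'\cup S))|$ and $b=|E(R'\setminus S,\;S\setminus R')|$. Because $d(R')=d(S)=m$, submodularity gives the two identities
\[
d(R'\cup S)+d(R'\cap S)=2m-2b,\qquad d(R'\setminus S)+d(S\setminus R')=2m-2a .
\]
Now I would branch on whether $R'\cap S$ contains an infinite component. If it does, that component alone forces $d(R'\cap S)\ge m$; combined with $d(R'\cup S)\ge m$ (an infinite region in $U$) the first identity forces $b=0$ and $d(R'\cup S)=m$, so the \emph{merge} $\tilde R=R'\cup S$ is a connected $m$-region with $S\subseteq\tilde R$. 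Otherwise $R'\cap S$ is finite, so both $R'\setminus S$ and $S\setminus R'$ are infinite regions in $U$ of boundary $\ge m$; the second identity then forces $a=0$ and $d(R'\setminus S)=d(S\setminus R')=m$. Since $R'\setminus S$ has boundary exactly $m$ yet contains an infinite (hence $\ge m$) component, it can have no other component, so it is connected, and the \emph{carve} $\tilde R=R'\setminus S$ is a connected $m$-region disjoint from $S$. In both cases $\tilde R$ is an infinite $m$-region that does not split $S$, and $R'\setminus(S\cup\tilde R)=\emptyset$.

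To finish, I would iterate this step over $S_1,\dots,S_n$ (skipping any $S_i$ that is not split). Because the $S_i$ are pairwise disjoint and each step adds or deletes an \emph{entire} $S_i$, the split-status of every already-treated $S_j$ is preserved, and the output of each step remains an infinite $m$-region; moreover every vertex of $R$ lying outside $\bigcup_i S_i$ survives all carves, so at the end $R\setminus(\bigcup_i S_i\cup\tilde R)=\emptyset$, which is certainly finite. The main obstacle is precisely the single-step dichotomy: the two naive moves each fail on their own, since merging can push the boundary above $m$ while carving can disconnect $R'$. The heart of the argument is that the submodular identities, the minimality lower bound of $m$, and connectivity (no zero boundary, hence finitely many components) combine to \emph{select} the correct move and simultaneously pin its boundary to exactly $m$ and guarantee its connectivity.
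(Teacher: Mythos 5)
Your proof is correct, and its core is the same uncrossing argument the paper uses: the same key tool (submodularity of the cut function, which the paper isolates as Lemma~\ref{l_cutlemma}, but only as an inequality) and the same merge-or-carve dichotomy for a crossing pair. The execution differs in three ways, each of which buys something. (1) You keep the exact identities with the cross-terms $a,b$ and decide \emph{which} move succeeds by branching on whether $R'\cap S_i$ is finite, so your single step is constructive; the paper instead argues by contradiction (if neither $R\cup S_1$ nor $R\setminus S_1$ is an $m$-region, then Lemma~\ref{l_cutlemma} forces $\cardinality{\partial\p{S_1\cap R}}<m$ and $\cardinality{\partial\p{S_1\setminus R}}<m$, making $S_1$ finite) and never identifies which of the two moves works. (2) You iterate explicitly through $S_1,\dots,S_n$ with a split-status-preservation argument, valid precisely because the $S_i$ are pairwise disjoint so each move alters the intersection with only one $S_i$; the paper instead makes an extremal choice, taking $\tilde R$ to split the fewest $S_i$ subject to the covering condition, and lets the single step contradict minimality. (3) Most notably, your branching never invokes the hypothesis $|R\setminus\bigcup_{i\leq n}S_i|=\infty$ --- the paper needs it to know $R\setminus S_1$ is infinite inside its contradiction argument --- and you obtain the stronger conclusion that $R\setminus\p{\bigcup_{i\leq n}S_i\cup\tilde R}$ is actually empty, not merely finite; so your argument is slightly more general than the statement demands. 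Two cosmetic points: in your second branch, $R'\setminus S$ and $S\setminus R'$ are at that moment only infinite \emph{clopen sets}, not yet regions (the bound $\geq m$ comes from your component argument, with connectivity of the carve established afterwards, exactly as you then do), so the word ``regions'' there is premature; and connectivity of the merge $R'\cup S_i$ deserves the one-line remark that $R'\cap S_i\neq\emptyset$ because $R'$ splits $S_i$, so it is a union of two connected sets sharing a vertex.
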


For the proof we need the following lemma, which can be proven, as for graphs, by a simple double-counting argument.

\begin{lemma}
\label{l_cutlemma}
Let $X$ be a graph-like space, and $Y,Z \subset V(X)$ clopen subsets. Then
$$\cardinality{\partial Y} + \cardinality{\partial Z} \geq \max \Set{\cardinality{\partial \p{Y\cap Z}}+\cardinality{\partial \p{Y \cup Z}}, \cardinality{\partial \p{Y\setminus Z}}+\cardinality{\partial \p{Z \setminus Y}}}.$$
\end{lemma}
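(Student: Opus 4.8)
The plan is to reduce the whole statement to a finite bookkeeping computation by splitting the vertex set into the four cells cut out by $Y$ and $Z$. First I would observe that each of the six sets appearing in the display---$Y$, $Z$, $Y\cap Z$, $Y\cup Z$, $Y\setminus Z$ and $Z\setminus Y$---is clopen, being a finite Boolean combination of the clopen sets $Y$ and $Z$. Hence by Lemma~\ref{finitecuts} every one of their boundaries is a finite set of edges, so all the cardinalities in the statement are genuine natural numbers and the ensuing arithmetic takes place in $\N$ rather than among infinite cardinals.

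Next I would set $P_1 = Y\cap Z$, $P_2 = Y\setminus Z$, $P_3 = Z\setminus Y$ and $P_4 = V(X)\setminus\p{Y\cup Z}$; these four clopen sets partition $V(X)$. For $1\le i<j\le 4$ let $a_{ij}$ denote the number of edges with one endpoint in $P_i$ and the other in $P_j$ (edges with both endpoints inside a single cell lie on none of the boundaries and may be ignored). The double-counting step is then to express each boundary as a sum of a subset of the six numbers $a_{ij}$: for example $\cardinality{\partial Y} = a_{13}+a_{14}+a_{23}+a_{24}$, since $\partial Y$ is exactly the set of edges running between $P_1\cup P_2 = Y$ and $P_3\cup P_4 = V(X)\setminus Y$, and likewise $\cardinality{\partial\p{Y\cap Z}} = a_{12}+a_{13}+a_{14}$, $\cardinality{\partial\p{Y\cup Z}} = a_{14}+a_{24}+a_{34}$, and the analogous identities for $Z$, $Y\setminus Z$ and $Z\setminus Y$. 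Each edge is counted precisely according to which pair of cells its endpoints occupy.

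With these six identities in hand both inequalities become one-line subtractions. A direct computation gives $\cardinality{\partial Y}+\cardinality{\partial Z}-\cardinality{\partial\p{Y\cap Z}}-\cardinality{\partial\p{Y\cup Z}} = 2a_{23}$ and $\cardinality{\partial Y}+\cardinality{\partial Z}-\cardinality{\partial\p{Y\setminus Z}}-\cardinality{\partial\p{Z\setminus Y}} = 2a_{14}$, both of which are $\ge 0$. Thus each of the two quantities inside the maximum is bounded above by $\cardinality{\partial Y}+\cardinality{\partial Z}$, and hence so is the maximum itself. (As a sanity check on the algebra, $a_{23}$ counts the edges between $Y\setminus Z$ and $Z\setminus Y$ while $a_{14}$ counts the edges between $Y\cap Z$ and the complement of $Y\cup Z$, so equality in either inequality holds exactly when the corresponding edge set is empty.) There is essentially no obstacle beyond the initial finiteness remark: once Lemma~\ref{finitecuts} guarantees that every count is finite, the argument is the standard submodularity computation for cut functions, transcribed verbatim from the finite-graph case.
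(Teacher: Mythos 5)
Your proof is correct and is precisely the argument the paper has in mind: the paper gives no details, stating only that the lemma ``can be proven, as for graphs, by a simple double-counting argument,'' and your four-cell partition with the identities $\cardinality{\partial Y}+\cardinality{\partial Z}-\cardinality{\partial\p{Y\cap Z}}-\cardinality{\partial\p{Y\cup Z}}=2a_{23}$ and $\cardinality{\partial Y}+\cardinality{\partial Z}-\cardinality{\partial\p{Y\setminus Z}}-\cardinality{\partial\p{Z\setminus Y}}=2a_{14}$ is exactly that computation, carried out correctly. The only caveat is that your appeal to Lemma~\ref{finitecuts} presumes compactness (which holds in every application the paper makes, all to graph-like continua); if one wanted the statement for non-compact graph-like spaces one would replace the subtraction step by observing that the left-hand multiset of terms $a_{ij}$ contains the right-hand one, so the inequality follows from monotonicity of (possibly infinite) cardinal sums.
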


\begin{proof}[Proof of Lemma~\ref{lem_cleaninglemma}]
Without loss of generality, assume that $S_1$ is split by $R$, i.e.\ that $R\cap S_1 \neq \emptyset \neq S_1 \setminus R$. We claim that one of $S_1 \cup R$ or $R \setminus S_1$ is an $m$-region. They are clearly clopen subsets of vertices of $X$.

Otherwise, since $S_1 \cup R$ and $R \setminus S_1$ are infinite, we have $\cardinality{\partial S_1 \cup R} > m$ and $\cardinality{\partial R \setminus S_1} > m$. Thus, Lemma~\ref{l_cutlemma} implies that $\cardinality{S_1 \setminus R} < m$ and $\cardinality{S_1\cap R} < m$, so both regions are are finite, contradicting that $S_1$ is infinite. 

Hence, one of $S_1 \cup R$ or $R \setminus S_1$ is has a boundary of size $m$, and they can't be disconnected, as otherwise their components had to be finite. Now put $R'$ to be either one of them, whichever was the $m$-region. Then $R'$ splits strictly fewer $S_i$ than $R$, but covers the same set together with the $S_i$.
Thus, we may pick $\tilde{R}$ to be such that it splits the fewest number of $S_i$, subject to the condition that $| R \setminus ( \bigcup_{i \leq n} S_i \cup \tilde{R})|< \infty$. By the preceding argument, it follows that $\tilde{R}$ does not split any of the $S_i$.
\end{proof}

Let $X$ be a graph-like continuum, and $U \subset X$ a region. Assume there is no infinite $k$-region with $k < m$ of $G$ contained in $U$. Let $\script{R}=\set{R_n}:{n \in \N}$ be an enumeration of all infinite $m$-regions of $G$ contained in $U$. Since each $R_i$ is faithfully represented by the finite cut $\partial R_i \subset E$, and $E$ is countable, there are indeed at most countably many such regions. Below we write $\script{S} \preceq \script{S}'$ if  $\script{S}$ is a refinement of $\script{S}'$, i.e. for all $S \in \script{S}$ there is $S' \in \script{S}'$ such that $S \subseteq S'$.

\begin{lemma}
\label{lem_contraction}
For every $n \in \N$ there are finite collections $\script{S}_n \subset \script{R}$ of disjoint $m$-regions of $U$ such that

(1) for all $R_j$ with $j \leq n$ we have $\cardinality{R_j \setminus \bigcup \script{S}_n} < \infty$, and
	(2) $\script{S}_n \preceq \script{S}_{n+1}$.
\end{lemma}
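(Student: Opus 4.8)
The plan is to build the families $\mathcal{S}_n$ by recursion on $n$, using Lemma~\ref{lem_cleaninglemma} as the engine: whenever the next region $R_{n+1}$ is not yet almost covered by what we have, that lemma replaces it by an $m$-region $\tilde R$ that fits disjointly alongside the regions already chosen. For the base case I would take $\mathcal{S}_0 = \{R_0\}$, a one-element family of (disjoint) $m$-regions for which $R_0 \setminus \bigcup \mathcal{S}_0 = \emptyset$, so that (1) holds at $n=0$ and (2) is not yet relevant.

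For the recursive step, suppose $\mathcal{S}_n = \{S_1, \ldots, S_k\}$ has been constructed, and consider $R_{n+1}$. If $|R_{n+1} \setminus \bigcup \mathcal{S}_n| < \infty$, I would simply set $\mathcal{S}_{n+1} = \mathcal{S}_n$; then (1) holds for $j = n+1$ as well and $\mathcal{S}_n \preceq \mathcal{S}_{n+1}$ trivially. Otherwise $|R_{n+1} \setminus \bigcup \mathcal{S}_n| = \infty$, and I would apply Lemma~\ref{lem_cleaninglemma} (whose hypothesis that no infinite $k$-region with $k<m$ lies in $U$ is a standing assumption here) with $R = R_{n+1}$ and the pairwise disjoint $S_1, \ldots, S_k$. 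This produces an $m$-region $\tilde R$ splitting no $S_i$ with $|R_{n+1} \setminus (\bigcup \mathcal{S}_n \cup \tilde R)| < \infty$, and I would then define
$$\mathcal{S}_{n+1} = \{\tilde R\} \cup \{S_i : S_i \cap \tilde R = \emptyset\}.$$

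It remains to verify the three requirements, all of which follow cleanly from the non-splitting property of $\tilde R$. Disjointness is immediate, since the $S_i$ are pairwise disjoint and I retain exactly those missing $\tilde R$. For the refinement $\mathcal{S}_n \preceq \mathcal{S}_{n+1}$, each $S_i$ is either contained in $\tilde R$ (as $\tilde R$ does not split it) or disjoint from it, and in the two cases it lies inside $\tilde R \in \mathcal{S}_{n+1}$ or is itself a member of $\mathcal{S}_{n+1}$. For the covering condition, I would note that the $S_i$ swallowed by $\tilde R$ contribute nothing new, so $\bigcup \mathcal{S}_{n+1} = \tilde R \cup \bigcup \mathcal{S}_n$; hence $\bigcup \mathcal{S}_n \subseteq \bigcup \mathcal{S}_{n+1}$ preserves (1) for $j \le n$, while $R_{n+1} \setminus \bigcup \mathcal{S}_{n+1} = R_{n+1} \setminus (\bigcup \mathcal{S}_n \cup \tilde R)$ is finite, giving (1) for $j = n+1$.

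The one genuinely delicate point, which I would single out as the main obstacle, is confirming that the object $\tilde R$ supplied by Lemma~\ref{lem_cleaninglemma} actually belongs to $\mathcal{R}$, rather than being merely a clopen vertex set. It is contained in $U$ because it arises from $R_{n+1}$ and the $S_i$ through the cut operations in the proof of that lemma, and it is infinite because it must absorb infinitely many points of the infinite set $R_{n+1} \setminus \bigcup \mathcal{S}_n$; together with its boundary having size $m$, this makes $\tilde R$ an infinite $m$-region in $U$, so $\tilde R \in \mathcal{R}$ and $\mathcal{S}_{n+1}$ remains a finite subfamily of $\mathcal{R}$ as required. Everything else is bookkeeping.
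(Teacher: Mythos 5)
Your proposal is correct and follows essentially the same construction as the paper: start with $\{R_0\}$, and at each step apply Lemma~\ref{lem_cleaninglemma} to $R_{n+1}$ against the current disjoint family, then keep $\tilde{R}$ together with the old regions it misses. You are in fact slightly more careful than the paper's own write-up, since you handle separately the case $\cardinality{R_{n+1} \setminus \bigcup \script{S}_n} < \infty$ (where the hypothesis of Lemma~\ref{lem_cleaninglemma} fails) and you verify explicitly that $\tilde{R}$ is an infinite $m$-region contained in $U$, hence a member of $\script{R}$.
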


\begin{proof}[Construction]
We begin with $\script{S}_0 = \singleton{R_0}$. Suppose $\script{S}_n \subseteq \script{R}$ has been found satisfying the above properties. Applying Lemma~\ref{lem_cleaninglemma} with $R_{n+1}$ and the collection $\script{S}_n$, we obtain an infinite $m$-region $\tilde{R}_{n+1}$. We claim that 
$\script{S}_{n+1} =\{\tilde{R}_{n+1}\} \cup  \{S \in \script{S}_n : S\cap \tilde{R}_{n+1} = \emptyset\}$ is as desired.
Indeed, by construction, $\script{S}_{n+1}$ covers $R_{n+1}$ up to finitely many vertices; and $\bigcup \script{S}_{n} \subseteq \bigcup \script{S}_{n+1}$, so we preserved the covering properties of earlier stages. 
\end{proof}

We would like to contract the `maximal' $m$-regions (with respect to inclusion) contained in $\script{S} =\bigcup \script{S}_n$. However, for graph-like continua, there can be infinite non-trivial chains in $\script{S}$. Still, for any such chain $S_0 \subsetneq S_1 \subsetneq S_2 \subsetneq \cdots$ of $m$-regions, we can contract a suitable collection of disjoint even regions such that after contraction, all $S_n$ are finite. Our plan is to contract $S_{0}$, and each component of $S_{n+1} \setminus S_n$, to a single point for all $n \in \N$. Our next lemma provides the details for the second case. 

\begin{lemma}
\label{lem_componentsareeven}
Let $X$ be a graph-like continuum, and $U \subset X$ a region. Assume there is no infinite $k$-region with $k < m$ of $G$ contained in $U$.

If $S \subsetneq R$ are infinite $m$-regions contained in $U$, then $X[R \setminus S]$ has at most $m$ connected components, and every such component is an even region of $X$.
\end{lemma}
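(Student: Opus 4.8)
The plan is to understand the components of $X[R\setminus S]$ through a direct edge-count, using the hypothesis that $U$ contains no infinite region of boundary size below $m$ as the sole arithmetic lever. First I would note that $\partial(R\setminus S)$ is a cut, hence finite by Lemma~\ref{finitecuts}, and that since $X$ is connected the argument from Lemma~\ref{basisregion} shows $X[R\setminus S]$ has only finitely many components $C_1,\dots,C_t$; each is a region contained in $R\subseteq U$. The two structural facts I would isolate are: (a) by the definition of the induced subspace there are no edges between distinct components, so on setting $a_j=\cardinality{E(C_j,S)}$ and $c_j=\cardinality{E(C_j,V\setminus R)}$ one has $\cardinality{\partial C_j}=a_j+c_j$; and (b) connectedness of $R$ together with $C_j\subsetneq R$ (which holds since $S\subseteq R\setminus C_j$ is non-empty) forces $C_j$ to be joined to $R\setminus C_j$, and as all such edges must run to $S$, we get $a_j\ge 1$.

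The heart of the argument is to sandwich $a_j$ and $c_j$. Both $S\cup C_j$ and $R\setminus C_j$ are regions contained in $U$ — connectedness is immediate from $S$ being connected and each $a_i\ge 1$ — and both are infinite since they contain $S$. A routine boundary computation, keeping track of which edges become internal and which become newly exposed, gives $\cardinality{\partial(S\cup C_j)}=m-a_j+c_j$ and $\cardinality{\partial(R\setminus C_j)}=m+a_j-c_j$. Because $U$ contains no infinite region with boundary smaller than $m$, each of these is at least $m$, yielding $c_j\ge a_j$ from the first and $a_j\ge c_j$ from the second, hence $a_j=c_j$. Consequently $\cardinality{\partial C_j}=2a_j$ is even, so every $C_j$ is an even region.

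The component bound then drops out of the same quantities: since $\sum_j a_j=\cardinality{E(S,R\setminus S)}\le\cardinality{\partial S}=m$ and every $a_j\ge 1$, there can be at most $m$ summands, i.e.\ $t\le m$.

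The main obstacle is obtaining both inequalities $a_j\ge c_j$ and $c_j\ge a_j$: applying the minimality hypothesis directly to $C_j$ gives nothing, since $C_j$ need not be infinite. The trick that makes the proof work is to apply the hypothesis instead to the two auxiliary \emph{infinite} regions $S\cup C_j$ and $R\setminus C_j$, whose boundary sizes differ from $m$ by exactly $+(c_j-a_j)$ and $+(a_j-c_j)$ respectively; forcing both to be $\ge m$ pins $a_j=c_j$ and thereby delivers evenness at once.
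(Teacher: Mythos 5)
Your proof is correct, and its first half --- the reduction to finitely many components, the observation that each component $C_j$ sends at least one edge to $S$, and above all the trick of applying the ``no infinite region of boundary $<m$'' hypothesis to the auxiliary infinite region $S\cup C_j$ --- is exactly the paper's. Where you genuinely diverge is in how the equality $a_j=c_j$ is pinned down. The paper applies the hypothesis only once per component, to $S\cup C_j$, obtaining the one-sided inequality $c_j\ge a_j$; it then closes the gap by a \emph{global} counting argument: in the contraction multigraph of the multi-cut $(S,\,R\setminus S,\,V\setminus R)$ the vertices $\Set{S}$ and $\Set{V\setminus R}$ both have degree $m$, whence $\sum_j a_j=\sum_j c_j$, and summing the one-sided inequalities forces equality for every $j$. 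You instead stay entirely local: your second auxiliary region $R\setminus C_j$ (whose connectedness you correctly reduce to $a_i\ge 1$ for all $i$) has boundary $m+a_j-c_j$, and the hypothesis applied to it gives the reverse inequality $a_j\ge c_j$ directly. Your route costs one extra application of the hypothesis per component, but it buys two things: it dispenses with the Handshaking Lemma and the contraction graph entirely, and it nowhere uses the parity of $m$ --- the paper's assertion that $\Set{S}$ and $\Set{V\setminus R}$ are \emph{even} vertices tacitly assumes $m$ is even, which holds in the intended application inside Theorem~\ref{thm_contraction} but is not a stated hypothesis of the lemma, so your argument actually proves the lemma as literally stated. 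In exchange, the paper's global step yields the additional intermediate fact that $\partial\of{R\setminus S}$ is an even cut; in your setup this still follows, since $\cardinality{\partial\of{R\setminus S}}=\sum_j(a_j+c_j)=2\sum_j a_j$. The component bounds are the same in substance: each component meets $E(S,R\setminus S)\subseteq\partial S$, which has at most $m$ elements.
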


\begin{proof}
Note that since $X[R]$ is path-connected, it follows that every component of $X[R \setminus S]$ has to limit onto an end vertex of some $e \in \partial S$. Thus, $X[R \setminus S]$ has at most $\cardinality{\partial S} =m$ components. In particular, every component is clopen in $X[R \setminus S]$, and hence a region of $X$.

To see that $\partial \p{R \setminus S}$ is even, consider the graph induced by the multi-cut $(S, R \setminus S, V \setminus R)$. This graph has two even vertices, namely $\Set{S}$ and $\Set{V \setminus R}$. So  by the Handshaking Lemma, also the last vertex is even, i.e.\ $R \setminus S$ induces an even cut. Moreover, since in the contraction graph, both $\Set{S}$ and $\Set{V \setminus R}$ have degree $m$, it follows that the third vertex has the same number of edges to $\Set{S}$ and to $\Set{V \setminus R}$. In other words, we have
$\cardinality{\partial \p{R \setminus S} \cap \partial R} = \cardinality{\partial \p{R \setminus S} \cap \partial S}$.

Let $C$ denote the vertex set of one such component. It follows that in order to establish that $C$ is an even region, it suffices to show that 
\begin{align}\label{eq_1}
\cardinality{\partial C \cap \partial R} \geq \cardinality{\partial C \cap \partial S}.
\end{align}
Indeed, once we know that (\ref{eq_1}) holds for every component $C$, then $\cardinality{\partial R} = m = \cardinality{\partial S}$ gives equality in (\ref{eq_1}).
To see that (\ref{eq_1}) holds, note that if $\cardinality{\partial C \cap \partial R} < \cardinality{\partial C \cap \partial S}$, then we see that $\cardinality{\partial \p{S \cup C}} < m$, so this is a finite region, contradicting that $S$ was infinite.
\end{proof}

We now collapse all maximal $m$-regions in $\script{S} =\bigcup \script{S}_n$, and for every infinite proper chain in $\script{S}$ we perform the above contractions. Write $\script{M}$ for the disjoint collection of even regions we contract. Write $q_\script{M} \colon V(X) \to V(X/\script{M})$, which extends to a continuous (monotone) quotient map on $X \to X/\script{M}$ (where we also contract all potential loops), which we also call $q_\script{M}$. Note that since we contracted regions of a compact space, the map $q_\script{M} \colon X \to X/\script{M}$ is a closed, monotone map. In particular, this implies that preimages of regions are regions, see Theorem 9 of \cite{kuratowski}.

\begin{proof}[Proof of Theorem~\ref{thm_contraction}]
First, to see that $X/\script{M}$ is still a graph-like continuum, note that our countable family $\script{M}$ forms a null-sequence of clopen sets by Corollary~\ref{souslian}. It follows from the fact that \emph{if $X$ is separable metrizable, and $\script{A}=\set{A_n}:{n \in \N}$ a null-sequence of non-empty compact subsets of $X$, then $X/\script{A}$ is separable metrizable}, \cite[A.11.6]{mill}, that $X/\script{M}$ is a continuum. Further, it is graph-like, because its vertex set $V(X) / \script{M}$ is totally disconnected: If there was any non-trivial connected set $C \subset V(X/ \script{M})$, then $C$ cannot contain contracted vertices (they are isolated), so $C \subset V(X)$ is non-trivial connected, contradiction.

Item (i), that every isolated vertex of $X/\script{M}$ is even, follows from Lemma~\ref{lem_componentsareeven}, as we only contracted even regions. 

For (ii), that all $m$-regions of $X/\script{M}$ contained in $\pi(U)$ are finite, note that for any such m-region $D$ of $X/\script{M}$, the clopen vertex set $D'=\pi^{-1}(D)$ is an $m$-region of $X$. If $D'$ was infinite, then $D'$ appears in our list, so is covered by some finite $\script{S}_n$. Consider $S \in \script{S}_n$. Note that $S$ either gets contracted to a single point, or $S$ appears in an infinite chain with at most $n$ predecessors, in which case we contract $S$ to at most $(m\cdot n+1)$-many points. It follows that $D'$ gets contracted to finitely many points, i.e.\ $D$ is finite. 

For (iii), let $D$ be an $\ell$-region of $X$. There are at most $\ell$ many elements $M_1, \ldots, M_\ell \in \script{M}$ such that $\partial D \cap E[M_i] \neq \emptyset$. Now if $D \subset M_i$ for some $i$ then it is clear that $\pi(D)$ is finite. Otherwise, choose disjoint $m$-regions  $S_i \supset M_i$ in $\script{S}$. We claim that either $\tilde{D}= D\cup S_1$ or $\tilde{D} = D\setminus S_1$ is an $\leq \ell$-region. Otherwise, it follows from Lemma~\ref{l_cutlemma} that $\cardinality{\partial \p{D\cap S_1}}< m$ and $\cardinality{\partial \p{S_1 \setminus D}}<m$. So $S_1$ is finite, a contradiction. Continue with the other $S_i$. This gives us an $\leq \ell$-region $D'$, which differs from $D$ by finitely many $S \in \script{S}$.
\end{proof}

\subsection{Chasing odd regions}

After having established Theorem~\ref{thm_contraction}, the proof now proceeds essentially as in \cite{euler}. We need one more simple lemma.

\begin{lemma}
\label{nooddfiniteregions}
A graph-like continuum in which all isolated vertices are even does not contain finite odd regions.
\end{lemma}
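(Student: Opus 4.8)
The plan is to argue by contradiction using a single Handshaking count on a contraction graph, exactly as in the proof of Lemma~\ref{prop:sequenceoddcuts}. Suppose, for a contradiction, that $U$ is a finite \emph{odd} region of a graph-like continuum $X$ all of whose isolated vertices are even. The first step is to observe that every vertex of $U$ is in fact an isolated vertex of $X$: since $U$ is a region it is clopen in $V(X)$, and being finite it is discrete (as $V(X)$ is metrizable), so each singleton $\singleton{v}$ with $v \in U$ is open in $U$ and hence open in $V(X)$. Thus each $v \in U$ is isolated, and so by hypothesis \emph{even}. Taking $A=\singleton{v}$ in the definition of an even vertex, this says precisely that $\cardinality{E(\singleton{v}, V \setminus \singleton{v})}$ is a finite even number (finiteness by Lemma~\ref{finitecuts}).

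The second step is to form the multi-cut $\script{U} = \set{\singleton{v}}:{v \in U} \cup \Set{V \setminus U}$. This is a genuine finite multi-cut: $U$ is finite, so there are only finitely many singletons together with the one piece $V \setminus U$, and all pieces are clopen (the singletons because each $v$ is isolated, and $V\setminus U$ as the complement of the clopen set $U$). The associated finite multi-graph $G(\script{U})$ therefore has, for each $v \in U$, a vertex $\pi_\script{U}(\singleton{v})$ of degree $\cardinality{E(\singleton{v}, V\setminus\singleton{v})}$, together with a single further vertex $w = \pi_\script{U}(V \setminus U)$ whose degree is exactly $\cardinality{E(U, V\setminus U)} = \cardinality{\partial U}$.

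The final step applies the Handshaking Lemma to the finite multi-graph $G(\script{U})$: the number of vertices of odd degree is even. By the first step every vertex $\pi_\script{U}(\singleton{v})$ has even degree, so $w$ cannot be the unique odd-degree vertex; hence its degree $\cardinality{\partial U}$ must be even. This contradicts the assumption that $U$ is an odd region, completing the proof.

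There is no substantial obstacle here: the argument is a routine parity count. The only two points requiring a moment's care are verifying that the vertices of a finite region are genuinely isolated in $V(X)$ (so that the hypothesis on isolated vertices applies), and noting that possible loops at a vertex $v$ cause no difficulty, since the degree in $G(\script{U})$ is defined through the cut $E(\singleton{v}, V\setminus\singleton{v})$, which is exactly the quantity controlled by the evenness hypothesis. Both are handled by the observations above.
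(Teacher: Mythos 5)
Your proof is correct and takes essentially the same route as the paper: the paper's own argument likewise passes to the multi-cut $(V \setminus U, \{v_1\}, \ldots, \{v_n\})$ consisting of the singletons of the finite region together with its complement, and applies the Handshaking Lemma to the associated finite multi-graph to conclude that $\partial U$ is even. Your extra verifications (that the vertices of a finite region are isolated in $V(X)$, hence even with finite even degree, and that loops are harmless because degrees in the contraction graph are computed from cuts) only make explicit what the paper leaves implicit.
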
 
\begin{proof}
If $A=\Set{v_1, \ldots, v_n}\subset V(X)$ is a finite region, consider the finite graph induced by the multi-cut $(V \setminus A, \singleton{v_1}, \ldots, \singleton{v_n})$. Since all vertices $v_i$ are even, it follows from the Handshaking Lemma that also $\Set{V \setminus A}$ must be even.
\end{proof}

\begin{theorem}\label{se_imp_E}
A graph-like continuum is Eulerian if  all its vertices have strongly even degree.
\end{theorem}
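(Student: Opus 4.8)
The plan is to prove the contrapositive: if a graph-like continuum $X$ is not closed Eulerian, then $X$ has a vertex which does not have strongly even degree, i.e.\ a vertex of weakly odd degree. Since under the hypothesis every vertex (in particular every isolated vertex) has strongly even degree, all isolated vertices are even, so that the Contraction Theorem (Theorem~\ref{thm_contraction}) and Lemma~\ref{nooddfiniteregions} are available throughout. First I would observe that if $X$ is not closed Eulerian then, by the equivalence of (i) and (iii) in Theorem~(B) (whose relevant direction is Proposition~\ref{theorem16}), $X$ carries an odd cut $E(A,B)$. Decomposing $X[A]$ into its finitely many components and applying the handshaking argument of Lemma~\ref{prop:sequenceoddcuts}, one component is an odd region; shrinking it to an odd region of least boundary gives a seed region $U_0$ which is \emph{minimal}, meaning no odd region contained in $U_0$ has smaller boundary. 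From $U_0$ I will build a sequence $U_0 \supset U_1 \supset \cdots$ satisfying conditions (1)--(4) of the odd-end Lemma (Lemma~\ref{lem_oddend}), which then produces the desired weakly odd vertex and hence the contradiction.

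The heart of the proof is the recursive step producing $U_{n+1}$ from a minimal odd region $U_n$ with $\cardinality{\partial U_n}=k_n$. Minimality of $U_n$ already forbids \emph{odd} regions of boundary below $k_n$ inside $U_n$, so it secures condition (3) of Lemma~\ref{lem_oddend} against all odd intermediate regions; the only obstruction is an \emph{even} region $D$ with $U_{n+1}\subseteq D\subseteq U_n$ and $\cardinality{\partial D}<k_n$. To remove these I apply the Contraction Theorem successively with the even thresholds $m=2,4,\ldots,k_n-1$, each time contracting the infinite even regions of the current boundary size inside $U_n$ to single points. The hypothesis needed to start, namely that no infinite $j$-region with $j<m$ survives, holds: odd such regions are excluded by minimality (which is preserved under the contractions, since a small odd region in the quotient would pull back, via the monotone quotient map, to a small odd region of $X$ inside $U_n$), and the even ones are cleared at the previous threshold. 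After these finitely many contractions the region $\pi(U_n)$ in the quotient contains no infinite region of boundary below $k_n$; I then choose $U_{n+1}$ to be the preimage of a minimal odd region of the quotient contained in $\pi(U_n)$, arranged---by passing to a small enough such region---to exclude $e_n$, giving condition (4). Because preimages of regions under a monotone map are regions of the same boundary (Theorem~9 of \cite{kuratowski}), $U_{n+1}$ is again a minimal odd region, so the invariant is maintained and condition (3) holds for odd intermediate regions at the next stage.

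It remains to see that condition (3) now also holds for even intermediate regions. By conclusion (iii) of the Contraction Theorem, together with Lemma~\ref{lem_componentsareeven} (which guarantees the contracted regions really are even, so that oddness of cuts and evenness of isolated vertices are preserved), every infinite even region of $X$ of boundary below $k_n$ becomes, up to finitely many vertices, a small region of the quotient, and by the choice of $U_{n+1}$ each such region lies, after uncontracting, either inside $U_{n+1}$ or disjoint from it. In neither case is it strictly between $U_{n+1}$ and $U_n$; and a finite even region cannot contain the (infinite) region $U_{n+1}$, so cannot be intermediate either. Thus no even region violates condition (3), and conditions (1), (2), (4) hold by construction. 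Feeding $(U_n)_n$ into Lemma~\ref{lem_oddend} yields a vertex $v$ of weakly odd degree. As weakly odd degree is by definition the failure of strongly even degree, this contradicts the hypothesis, so $X$ must be closed Eulerian.

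The step I expect to be the main obstacle is exactly the recursive construction of $U_{n+1}$: one must juggle, simultaneously, the existence of a minimal odd subregion that avoids the edge $e_n$, the correct ordering of the finitely many applications of the Contraction Theorem (each prerequisite---no surviving small region below the current threshold---being re-established after the previous contraction), and the verification that pulling the chosen region back through all the contractions leaves it odd, minimal, nested in $U_n$, and free of smaller even regions in between. This bookkeeping is precisely what the contraction apparatus was developed to support, and here the argument runs parallel to that of Bruhn \& Stein \cite{euler}.
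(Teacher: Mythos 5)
Your overall architecture coincides with the paper's: contrapositive, reduction to the case where all isolated vertices are even, a seed odd region obtained from Proposition~\ref{theorem16} and Lemma~\ref{prop:sequenceoddcuts}, repeated applications of Theorem~\ref{thm_contraction} at increasing even thresholds, selection of a next odd region avoiding $e_n$, and finally Lemma~\ref{lem_oddend}. But there is one genuine gap, sitting exactly at the step you flag as delicate: the propagation of your minimality invariant from one stage to the next. You maintain minimality of $U_n$ as a property \emph{inside $X$} (no odd region of $X$ contained in $U_n$ has boundary smaller than $k_n$), and you use it both to verify the hypotheses of Theorem~\ref{thm_contraction} and to dispose of odd intermediate regions in condition (3) of Lemma~\ref{lem_oddend}. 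To obtain the invariant at stage $n+1$ you argue that ``preimages of regions under a monotone map are regions of the same boundary, hence $U_{n+1}$ is again a minimal odd region.'' That inference goes in the wrong direction: the preimage fact shows that $U_{n+1}=f_{n+1}^{-1}(V_{n+1})$ is an odd $k_{n+1}$-region of $X$, but it says nothing about odd regions of $X$ \emph{inside} $U_{n+1}$. Transferring minimality of $V_{n+1}$ (a property in the quotient $X_{n+1}$) back to $X$ would require pushing an arbitrary odd region $D\subset U_{n+1}$ of $X$ \emph{forward} to the quotient, and this is precisely what the contraction machinery does not provide: the image of a region can straddle contracted sets, its boundary can grow and change parity, and conclusion (iii) of Theorem~\ref{thm_contraction} only places $\pi(D)$, modulo a finite set, inside some region of boundary at most $\cardinality{\partial D}$, with no parity control. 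In particular nothing in the construction excludes an odd region $D\subset U_{n+1}$ of $X$ with $k_n\leq\cardinality{\partial D}<k_{n+1}$, so the invariant you rely on from stage $1$ onward is unsupported.

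The repair stays entirely within your framework, and it is what the paper does: maintain no minimality statement in $X$ at all. Keep minimality only for $V_{n+1}$ inside the quotient $X_{n+1}$ (minimal among odd regions of $X_{n+1}$ contained in the region avoiding $e_n$); that suffices to verify the hypotheses of the next round of contractions, because a small odd region of a later quotient pulls back to a small odd region of $X_{n+1}$ inside $V_{n+1}$ --- pull back one level only, not all the way to $X$. For condition (3) of Lemma~\ref{lem_oddend}, drop the odd/even case split: Theorem~\ref{thm_contraction}(ii) and (iii) together crush \emph{every} region of $X_n$ contained in $V_n$ with boundary below $k_n$, of either parity, to finitely many vertices of $X_{n+1}$, so a sandwiched region $D$ of $X$ with $U_{n+1}\subset D\subset U_n$ and $\cardinality{\partial D}<k_n$ would force $V_{n+1}=f_{n+1}(U_{n+1})\subseteq f_{n+1}(D)$ to be finite, contradicting Lemma~\ref{nooddfiniteregions}. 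This uniform argument also fixes the related looseness in your even-region paragraph: the contractions do not send each small even region to a single point, only to a finite set, so your ``inside $U_{n+1}$ or disjoint from it'' dichotomy is not literally available; the finite-image argument is what actually closes that case.
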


\begin{proof}
Assume $X$ is not Eulerian. To prove the contrapositive we show $X$ contains a vertex without strongly even degree. If some isolated vertex does not have (strongly) even degree then we are done.  So assume all isolated vertices of $X$ are even. We  construct a sequence of graph-like continua $X=X_0, X_1, \ldots$ such that
\begin{enumerate}[label=(\alph*)]
	\item\label{aaa} $X_0 \xrightarrow{\pi_1} X_1 \xrightarrow{\pi_2} X_2 \xrightarrow{\pi_3} \cdots$ are successive quotients with monotone open quotient maps $\pi_n$, and write $f_n = \pi_n \circ \pi_{n-1} \circ \cdots \circ \pi_1$,
    \item\label{bbb} all $X_n$ have the property that all isolated vertices are even,
    \item\label{ccc} there are regions $V_n \subset X_n$ such that 
    	\begin{enumerate}[label=$(\arabic{*})'$]
			\item\label{odd'} $\cardinality{\partial V_n}$ is odd for all $n \in \N$,
			\item\label{nested'} $\pi_{n+1} (V_n) \supset V_{n+1}$,	
			\item\label{notsmaller'} any $\ell$-region of $X$ gets contracted to a $\leq \ell$-region of $ X_n$ modulo finitely many isolated vertices; and any $k$-region of $X_{n}$ contained in $V_n$ with $k < \cardinality{\partial V_n}$ gets contracted to finitely many vertices in $X_{n+1}$,
            \item\label{nobigedges'} $e_n \notin E[V_{n+1}]$.
		\end{enumerate}	
\end{enumerate}
Before describing the construction, let us see that that
$U_n = f_n^{-1}(V_n)$ 
defines regions satisfying the requirements of Lemma~\ref{lem_oddend}, and so $X$ has a vertex which does not have strongly even degree, as desired.

Indeed, as inverse images under monotone closed maps, they are connected, and hence regions in $X$. Next, it is easy to check that \ref{odd'} $\Rightarrow$ (\ref{odd}), \ref{nested'} $\Rightarrow$ (\ref{nested}) and \ref{nobigedges'} $\Rightarrow$ (\ref{nobigedges}). Finally, to see (\ref{notsmaller}), i.e.\ that $U_{n+1}$ does not lie behind some region $D$ of $U_n$ with small $\cardinality{\partial D}$, note that by \ref{notsmaller'}, this region $D$ would have been contracted to finitely many points in $X_{n+1}$, and hence $V_{n+1}$ would be finite, which is a contradiction by \ref{bbb} and Lemma~\ref{nooddfiniteregions}.

Now towards the construction of our sequence $X_0,X_1, \ldots$ with \ref{aaa}--\ref{ccc}. First, since $X=X_0$ is not Eulerian, it has an odd cut. By choosing $V_0=U_0$ to be an odd region of $X$ such that $\cardinality{\partial U_0}$ is minimal, we see that $V_0$ is as desired. Now suppose we have constructed $V_n \subset X_n$ according to \ref{aaa}--\ref{ccc}. Put $m_{n+1} = \cardinality{\partial V_n} -1$.

Recursively, apply Theorem~\ref{thm_contraction} with graph-like continuum $X^{(k)}$ and region $q_k\circ \cdots \circ q_1(U_n)$ to obtain graph-like continua 
$X_n=X^{(m_n)} \succ X^{(m_{n} + 1)} \succ \cdots \succ X^{(m_{n+1})}=X_{n+1}$ 
with corresponding monotone quotient maps
$q_k \colon X^{(k-1)} \to X^{(k)}$ for all even $0<k\leq m$.
Define
$\pi_{n+1} = q_{m_{n+1}} \circ \cdots \circ q_{m_{n} + 1} \colon X_n \to X_{n+1}$.

Note that Theorem~\ref{thm_contraction}(i) implies \ref{bbb}, and (ii) and (iii) imply \ref{ccc}\ref{notsmaller'}.  We now want to find an odd cut $V \subset \pi_{n+1}(V_n)$ such that $e_n \notin E(V)$. Towards this, note that $f_{n+1}(e_n)$ is either an isolated vertex $v$ of $X_{n+1}$, or $f_{n+1}(e_n)$ is an edge with end vertices say $x$ and $y$ in $X_{n+1}$. Find a multi-cut $\script{V}$ of $\pi_{n+1}(V_n)$ into regions which either displays $v$ as a singleton, or contains $x$ and $y$ in different partition elements. By Lemma~\ref{prop:sequenceoddcuts}, there is an odd region $V \in \script{V}$. Since isolated vertices of $X_{n+1}$ are even, $V$ is not the singleton $\singleton{v}$. In the other situation, note that in the induced graph $G(\script{V})$, the edge $f_{n+1}(e_n)$ is displayed as cross edge. In either case, we have  $e_n \notin E(V)$. 

Finally, amongst all odd regions of $X_n$ contained in $V$ pick any odd region $V_{n+1} \subset V$ such that $\cardinality{\partial V_{n+1}}$ is minimal. This choice satisfies items \ref{odd'}, \ref{nested'} and~\ref{nobigedges'}.
\end{proof}


\begin{thebibliography}{99}

\bibitem{standard} E. Berger and H. Bruhn, \emph{Eulerian edge sets in locally finite graphs}, Combinatorica \textbf{31} (2011), 21--38.

\bibitem{infinitematroids} N. Bowler, J. Carmesin and R. Christian, \emph{Infinite graphic matroids, Part I}, \url{http://arxiv.org/abs/1309.3735}.

\bibitem{euler} H. Bruhn and M. Stein, \emph{On end degrees and infinite cycles in locally finite graphs}, Combinatorica \textbf{27} (2007), 269--291.

\bibitem{Koenigsberg} W. Bula, J. Nikiel, and E. D. Tymchatyn, \emph{The K\"onigsberg bridge problem}, Can. J. Math. \textbf{46} (1994), 1175--1187.

\bibitem{totallyregular} R.D. Buskirk, J. Nikiel, E.D. Tymchatyn, \emph{Totally regular curves as inverse limits}, Houston. J. Math. \textbf{18}(3) (1992), 319--327.

\bibitem{Chapel} C.E. Capel, \emph{Inverse limit spaces}, Duke Math. J. \textbf{21} (1954), 233--245.

\bibitem{charatonik} J. J. Charatonik \emph{Monotone mappings of universal dendrites}, Topology App. \textbf{38} (1991), 163--187.

\bibitem{graphlikeplanar}  R. Christian, R. B. Richter, B. Rooney, \emph{The Planarity Theorems of MacLane and Whitney for Graph-like Continua}, Electron.
J. Combin. \textbf{17}, Research Paper 12, 2010.

\bibitem{dqu} R. Diestel, \emph{The cycle space of an infinite graph}, Comb. Probab. Comput. 14 (2005), 59--79.

\bibitem{DSurv} R. Diestel, \emph{Locally finite graphs with ends: a topological approach I-III}, Discrete Math 311--312 (2010--11).

\bibitem{infinitecycles} R. Diestel \& D. K\"uhn, \emph{On infinite cycles I}, Combinatorica \textbf{24} (2004), 68--89.

\bibitem{infinitecycles2} R. Diestel \& D. K\"uhn, \emph{On infinite cycles II}, Combinatorica \textbf{24} (2004), 91--116.

\bibitem{EM} B. Espinoza and E. Matsuhashi, \emph{Arcwise increasing maps}, Topology Appl. \textbf{190} (2015) 74--92.

\bibitem{agelos} A.\ Georgakopoulos, \emph{Topological circles and Euler tours in locally finite graphs}, Electronic J. Comb. \textbf{16}:\#R40 (2009).

\bibitem{agelosconnected} A.\ Georgakopoulos, \emph{Connected but not path-connected subspaces of infinite graphs}, Combinatorica \textbf{27} (6) (2007), 683--698.

\bibitem{kra} J. Krasinkiewicz, \emph{On two theorems of Dyer}, Colloq. Math., \textbf{50} (1986) 201--208.

\bibitem{kuratowski} K. Kuratowski, \emph{Topology} Vol. 2, Academic Press, New York and London, PWN--Polish Scientific Publishers, Warszawa, 1968. 

\bibitem{mill} J.\ van Mill, \emph{The Infinite-Dimensional Topology of Function Spaces}, Elsevier, 2001.

\bibitem{Nadler} S. B. Nadler, Jr, \emph{Continuum Theory: An Introduction}, Pure and Applied Mathematics Series, Vol. 158, Marcel Dekker, Inc., New York and
Basel, 1992.

\bibitem{inverselimitsnikiel} J.\ Nikiel, \emph{Locally connected curves viewed as inverse limits}, Fund. Math. \textbf{133}(2) (1989) 125--134. 

\bibitem{NW} C. St. J. A. Nash-Williams, \emph{Decompositions of graphs into closed and endless chains}, Proc. London Math. Soc. \textbf{10} (3) (1960), 221--238.

\bibitem{thomassenvella} C. Thomassen, A. Vella, \emph{Graph-like continua, augmenting arcs, and Menger's Theorem}, Combinatorica \textbf{28} (5) (2008) 595--623.

\bibitem{tymchatyn} E.D. Tymchatyn, \emph{Characterizations of continua in which connected subsets are arcwise connected}, Trans. Amer. Math. Soc. \textbf{222} (1976) 377--387.

\bibitem{Whyburn} G.T. Whyburn, \emph{Sets of local separating points of a continuum}, Bull. Amer. Math. Soc. \textbf{39} (1933), 97--100.

\end{thebibliography}
\end{document}